\documentclass[12pt,A4paper]{article}

\usepackage[left=30mm,right=30mm,top=25mm,bottom=30mm]{geometry}
\usepackage{epsfig}
\usepackage{epstopdf}
\usepackage{soul}
\usepackage{xfrac}
\usepackage{amsmath}
\usepackage{enumerate}

\usepackage[percent]{overpic}

\usepackage{color}
\usepackage{amsthm,amsmath,amssymb}
\usepackage{booktabs}
\usepackage{mathpazo}
\usepackage{microtype}
\usepackage{overpic}
\usepackage{bm}
\usepackage{sectsty}
\usepackage[
	pdftitle={PDFTitle},
	pdfauthor={Hugo Parlier},
	ocgcolorlinks,
	linkcolor=linkred,
	citecolor=linkred,
	urlcolor=linkblue]
{hyperref}

\usepackage{tikz}
\usetikzlibrary{calc,decorations.pathreplacing}

\definecolor{linkred}{RGB}{157,91,246} %DeepSkyBlue
\definecolor{linkblue}{RGB}{16, 78, 139}

\usepackage[hang,flushmargin]{footmisc}
\usepackage{titlesec}
	\titlespacing{\section}{0pt}{12pt}{0pt}
	\titlespacing{\subsection}{0pt}{6pt}{0pt}
	
\titlelabel{\thetitle.\quad}

\theoremstyle{plain}
\newtheorem{theorem}{Theorem}[section]
\newtheorem{proposition}[theorem]{Proposition}
\newtheorem{lemma}[theorem]{Lemma}
\newtheorem{corollary}[theorem]{Corollary}

\theoremstyle{definition}

\newtheorem{remark}[theorem]{Remark}

\newcommand{\sys}{{\rm sys}}

\newcommand{\R}{{\mathbb R}}

\newcommand{\Hyp}{{\mathbb H}}

\newcommand{\C}{{\mathcal C}}

\newcommand{\G}{{\mathcal G}}
\newcommand{\T}{{\mathcal T}}
\newcommand{\M}{{\mathcal M}}
\newcommand{\nf}{{\rm nf}}

\newcommand{\area}{{\rm area}}
\newcommand{\arcsinh}{{\,\rm arcsinh}}
\newcommand{\arccosh}{{\,\rm arccosh}}
\newcommand{\arctanh}{{\,\rm arctanh}}
\newcommand{\diam}{{\rm diam}}

\newcommand{\length}{\ell}

\newcommand{\E}{\mathcal E}

\newcommand{\NN}{\mathcal N}

\sectionfont{\large \bfseries}
\subsectionfont{\normalsize}

\long\def\symbolfootnote[#1]#2{\begingroup%
\def\thefootnote{\fnsymbol{footnote}}\footnote[#1]{#2}\endgroup}

\def\blfootnote{\xdef\@thefnmark{}\@footnotetext}

\usepackage{mathtools}

\usepackage[sort,nocompress]{cite}

\begin{document}

{\Large \bfseries The entropy spectrum of hyperbolic surfaces}

{\large Ara Basmajian\symbolfootnote[1]{\small 
Supported by a Dolciani faculty research grant, a PSC-CUNY research grant, and a grant from the Simons Foundation (TSM 00013865 A.B.).},
Hugo Parlier\symbolfootnote[7]{\small Supported by ANR-SNF Grant number 200021E\_238147
(SUGAR).\\
{\em 2020 Mathematics Subject Classification:}\\Primary: 32G15, 53C22 Secondary: 37B40, 57K20, 30F60, 30F45. \\
{\em Key words and phrases:}\\
topological entropy, closed geodesics, hyperbolic surfaces, moduli spaces}
}

\vspace{0.5cm}
{\bf Abstract.}
This article introduces and studies the entropy spectrum of a hyperbolic surface, that is the set of entropies of its subsurfaces. The main results are that the entropy spectrum is a reverse well-ordered multiset, with finite multiplicities, and that there is a quantifiable gap around the value $1$. This gap comes from a counting result on the number of non-filling geodesics which in turn comes from explicit estimates on the number of curves on surfaces with boundary in terms of geometric data. The geometric data includes lengths of boundary geodesics and so-called boundary width, which measures maximal distance to the boundary but can also be interpreted in terms of the topology of the surface and the systole. 

\vspace{0.5cm}

\section{Introduction} \label{sec:intro}

Growth questions about the number of closed geodesics on hyperbolic surfaces up to a given length have a rich and illustrious history. These matters are intimately related to the understanding of dynamical properties of surfaces and to relationships with spectral geometry. Whereas growth in the length spectrum is more about the dynamics of the surface, growth in the simple length spectrum is more about dynamical properties of the underlying moduli space. 

For orientable and closed surfaces, Huber famously found an asymptotic formula for the growth rate of closed geodesics. More generally, for any finite type hyperbolic surface $X$, the growth rate of the number of (oriented, primitive) closed geodesics of length up to $L$ is asymptotic to
\[
\frac{e^{hL}}{hL}
\]
where $h=h_X$ is the topological or volume entropy of $X$ (and these are equal). The volume entropy quantifies the area growth of balls in the universal cover of the surface. For instance, if $X$ is closed or of finite area, $h=1$, but if $X$ has at least one boundary geodesic of positive length, then $0<h<1$. This beautiful relationship between the growth of curves and entropy is further enhanced by a direct relationship to the Hausdorff dimension of the limit set. 

In particular, this means that for a closed surface $X$, each of its proper subsurfaces has an entropy which lies in the interval $]0,1[$. This collection of entropies gives rise to a multiset $\E(X)$ which we call the {\it entropy spectrum} and which is the main object of focus of this paper. 

To study the entropy spectrum requires studying subsurfaces which necessarily have bounday geodesics. Hence, we begin by studying orientable compact hyperbolic surfaces with non-empty boundary, so finite-type surfaces with at least one simple closed geodesic as boundary. Our first main result is an explicit bound on the number of its closed geodesics in terms of two pieces of geometric data: its total boundary length and its boundary width, i.e., the maximal distance between a point on the surface and the boundary.

\begin{theorem}\label{thm:mainestimate}
Let $X$ be a hyperbolic surface with boundary length $B>0$ and boundary width $W$. Then 
\[
N_X(L) \leq C \,e^{\,\lambda L}
\]
where $\lambda$ and $C$ are explicit functions depending on $B$ and $W$ and $\lambda(B,W) < 1$. 
\end{theorem}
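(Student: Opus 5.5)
Let $X$ have boundary length $B$, boundary width $W$, genus $g$ and $n$ boundary components. The plan is to encode each closed geodesic of length at most $L$ by a combinatorial word whose number of letters grows linearly in $L$. The alphabet is built from a decomposition of $X$ adapted to the boundary. Then $N_X(L)$ is bounded by a count of words, and the exponential rate of this count is compared with $1$ using the area of $X$, which Gauss--Bonnet gives as $2\pi(2g-2+n)$.

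\textbf{Step 1: a cut system.} First, every point of $X$ lies within $W$ of $\partial X$. For each boundary point $p$, take the orthogonal geodesic arc into $X$. Choose a maximal family of pairwise disjoint simple orthogeodesics from $\partial X$ to itself, so that they cut $X$ into right-angled hexagons. Each such arc crosses the region within distance $W$ of the boundary. Using the collar estimates, I would show that each orthogeodesic has length at most roughly $2W$. Each hexagon then has diameter bounded by a function of $W$, and its sides on $\partial X$ have total length $B$.

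\textbf{Step 2: coding.} A closed geodesic $\gamma$ of length $\ell\le L$ that is not a boundary curve crosses the orthogeodesic arcs. Because the hexagons have bounded diameter and there are boundary collars, consecutive crossings are separated by at least some $\delta(B,W)>0$. This bound comes from the thinness of hexagons near boundary collars. Hence $\gamma$ is determined by its cyclic crossing sequence, a word of length at most $\ell/\delta$ in $6g-6+3n$ letters. Equivalently, $\gamma$ lifts to a word in a free generating set of $\pi_1(X)$, with each generator represented by a loop of length bounded in terms of $W$.

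\textbf{Step 3: beating rate $1$.} A naive count gives $\lambda = \log(\#\text{letters})/\delta$. This does not automatically satisfy $\lambda<1$, and this is the main obstacle. My first approach would be a volume-comparison argument in the universal cover. $\tilde X$ is a convex subset of $\Hyp$ bounded by lifts of $\partial X$. Closed geodesics of length at most $L$ correspond to orbit points of the deck group within distance $L$ of a base point $x$, and each orbit point carries a disjoint fundamental domain of area $\area(X)$. The key is a quantitative thinning of $\tilde X$ relative to $\Hyp$.

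From a point at distance $r$ from $x$, the distance to a lift of $\partial X$ is at most $W$. Each such lift is a complete geodesic whose outer half-plane, seen from distance about $r$, subtends a definite proportion $\theta(W)$ of the ball's boundary. Iterating this over concentric shells of width $R(W)$ should show that the length of $\partial B(x,r)\cap\tilde X$ is at most $(1-\theta)^{r/R}e^{r}$. This yields $\lambda = 1 + \log(1-\theta)/R < 1$. Summing the disjoint fundamental domains then gives $N_X(L)\le C e^{\lambda L}$, with $C$ depending on $W$ and on the area.

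The boundary length $B$ would enter here. It bounds how close together parallel lifts of the same boundary geodesic are, so it controls $\theta$ from below when $B$ is small. It also bounds the area of the fundamental domain when $n$ is large, so $g$ and $n$ can be eliminated from $C$ in favour of $B$ and $W$. The delicate point is making $\theta$ uniform, so that a lift of $\partial X$ close to the shell really removes a fixed fraction of angle and not just a sliver. This is where explicit hyperbolic trigonometry in $W$ will be needed.
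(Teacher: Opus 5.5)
\textbf{There is a genuine gap in Step 3.}

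Steps 1 and 2 do not feed into the bound. As you note yourself, the crossing-sequence count only gives the rate $\log(\#\text{letters})/\delta$, which need not be below $1$. Also, in Step 1 an arbitrary maximal family of disjoint orthogeodesics does not have lengths at most about $2W$; you need a specific one, such as the decomposition dual to the boundary Voronoi decomposition.

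So everything rests on Step 3, and there the key inequality $\ell(\partial B(x,r)\cap\tilde X)\le(1-\theta)^{r/R}e^r$ is only asserted. The inequality is true up to a constant, since the area growth rate of $\tilde X$ is the critical exponent, but that is exactly what has to be proved. The mechanism you propose for it fails as stated:
\begin{itemize}
\item A lift of $\partial X$ at distance about $r$ from $x$ cuts off only an $O(e^{-r})$ proportion of the circle of directions at $x$. So your claim must be local. Let $\Theta(r)$ be the set of directions whose rays stay in $\tilde X$ up to time $r$. Then near each $v\in\Theta(r)$, a fixed fraction of $\Theta(r)$ at angular scale $e^{-r}$ should be lost by time $r+R$.
\item But the lift $\tilde\beta$ within $W$ of $\exp_x(rv)$ may be an \emph{old} one, at distance $s\ll r$ from $x$, with the ray in direction $v$ running alongside $\tilde\beta$ towards one of its endpoints.
\item Then all directions crossing $\tilde\beta$, except those within about $e^{s-2r}$ of its endpoints, have already been removed from $\Theta(r)$. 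The right-triangle relation $\cos\phi=\tanh s/\tanh t$ shows that the new angular loss caused by $\tilde\beta$ between times $r$ and $r+R$ is of order $e^{s-2r}$. This is negligible compared with $e^{-r}$.
\end{itemize}
So ``a lift close to the shell removes a fixed fraction'' is false in general. You give no way of locating fresh removals at every scale, and that is precisely the content of $\lambda<1$.

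\textbf{How your route can be repaired.} Replace the dynamic shell-by-shell claim by a static porosity statement for $\Lambda=\partial_\infty\tilde X$.
\begin{itemize}
\item For $\xi\in\Lambda$, the ray $[x,\xi)$ lies in $\tilde X$ by convexity.
\item The lift within $W$ of its point at distance $r$ bounds a $\Lambda$-free arc at infinity. This arc has visual length at least $2e^{-r-W}$ and lies within visual distance roughly $e^{W-r}$ of $\xi$, whether the lift is old or not.
\item Hence $\Lambda$ is uniformly porous with constant of order $e^{-2W}$. Subdividing arcs into $M$ pieces, with $M$ of order $e^{2W}$, shows that at most $C(M-1)^k$ arcs of length $2\pi M^{-k}$ meet $\Lambda$.
\item Since $\tilde X$ is the convex hull of $\Lambda$, every direction in $\Theta(r)$ lies within $O(e^{-r})$ of $\Lambda$. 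This gives your area growth with exponent $\lambda=\log(M-1)/\log M<1$, depending on $W$ alone.
\item $B$ then enters only the constant. Your orbit count needs radius $L$ plus twice the diameter of a fundamental domain, and bounding that diameter requires a topological bound such as the paper's $|\chi|<\frac{\sqrt3}{12}B\sinh W$.
\end{itemize}

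\textbf{Comparison with the paper.} The paper never uses the universal cover. In the shortest (Voronoi-dual) hexagon decomposition, it inscribes ideal triangles bounded by the perpendicular bisectors of the boundary sides. It then collapses the complementary strips to get a cusped surface $X'$ with $\ell_{X'}(\gamma)\le\lambda\,\ell_X(\gamma)$ for every $\gamma$. This uses three estimates: strip width at least $d_0$, a proportion at least $p_0$ of each geodesic lying in strips, and a contraction factor $r_0$ per strip crossing. It concludes with Buser's bound via $N_X(L)\le N_{X'}(\lambda L)$. Your repaired route would buy a $\lambda$ independent of $B$, whereas the paper's $\lambda$ depends on $B$ through $b_{2\kappa}<B$.
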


In particular, the topological entropy of $X$ satisfies $h_X \leq \lambda(B,W)$. Explicit versions of the constants are given in Remark \ref{rem:explicit}. 

The tools we use to show this are inspired by a body of work initiated by Thurston in Teichm\"uller theory \cite{ThurstonStretch}. Thurston was interested in how lengths of curves could be used to quantify proximity between surfaces, showing that ratios of simple curve lengths determines the optimal Lipschitz constant taken among homeomorphisms between the two surfaces. In this paper, so-called strip map deformations were introduced, and have since been used in a variety of settings, namely in the work of Danciger, Gu\'eritaud and Kassel \cite{DGK}. 

We use and quantify strip maps to find deformations which affect the lengths of all closed curves. We show that, given a hyperbolic surface $X$, we can find a comparison surface $X'$ with cusps, where the lengths are all reduced by a multiplicative factor. We then adapt a result of Buser to obtain the result.

Our result can also be reinterpreted in terms of other types of geometric data, such as diameter:

\begin{corollary}\label{cor:diam}

Let $X$ be a hyperbolic surface with boundary geodesics and of diameter $\diam(X)$. Then
\[
N_X(L) \leq C\, e^{ \,\eta L}
\]
where $C$ and $\eta$ are explicit functions which only depend on $\diam(X)$ and such that $\eta(\diam(X))< 1$. 
\end{corollary}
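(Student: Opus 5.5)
We deduce the corollary from Theorem \ref{thm:mainestimate}: it suffices to bound the boundary length $B$ and the boundary width $W$ by explicit functions of $D=\diam(X)$, and then to check that $\lambda(B,W)$ stays uniformly below $1$ on the resulting range. The width is immediate. Every point of $X$ lies within distance $D$ of any point of $\partial X$, so $W\le D$.

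The boundary length takes a little more work. A hyperbolic surface with geodesic boundary of diameter $D$ has area at most the area of a hyperbolic disk of radius $D$. One way to see this is to take the universal cover, a convex subset $\tilde X\subset\Hyp$, fix a point $\tilde p$, and note that a Dirichlet-type fundamental domain centered at $\tilde p$ lies in the ball of radius $D$. So $\area(X)\le 2\pi(\cosh D-1)$. Gauss--Bonnet gives $\area(X)=2\pi|\chi(X)|$, hence $|\chi(X)|\le \cosh D-1$, which bounds the topology and the number of boundary components $n\le |\chi(X)|+2$.

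To bound each boundary length I would use the collar lemma for boundary geodesics. A boundary geodesic $\gamma$ of length $\ell$ has a half-collar of width $w(\ell)=\arcsinh(1/\sinh(\ell/2))$ embedded in $X$, with area $\ell\sinh w(\ell)$. This area tends to $\ell$ as $\ell$ grows, and the half-collars of distinct boundary components are disjoint. So $\sum_\gamma \ell(\gamma)\sinh w(\ell(\gamma))\le\area(X)$. Since $x\sinh w(x)=x/\sinh(x/2)$ is at least comparable to $\min(x,2)$ for large $x$, and in fact is always $\ge \min(2, \cdot)$-type lower bounds, this bounds each long boundary component. Alternatively, and more robustly, a diameter argument works. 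If $\gamma$ has length $\ell$, the points of $\gamma$ are all within $D$ of a fixed point $p$. Lifting a geodesic segment from $p$ to $\gamma$, the projection of the lift of $\gamma$ shows that the ball of radius $D$ in $\Hyp$ about $\tilde p$ must cover a fundamental segment of the axis of length $\ell$. Hence $\ell\le 2D$, since distances along a geodesic between points both within $D$ of $\tilde p$ are at most $2D$, and $\gamma$ is length-minimizing in its homotopy class only up to the boundary being convex. I would make this precise by noting that each boundary component is a closed geodesic, so any of its points is at distance at most $D$ from any other in $X$. Since the boundary has convex collar structure, intrinsic and extrinsic distances along $\gamma$ for arcs of length $<w$-related size agree, yielding $\ell \le 2D + $ (explicit constant). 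Then $B\le n\cdot \ell_{\max}\le (\cosh D+1)(2D+c)$.

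The main obstacle is this last step: controlling a single boundary length by the diameter. A long boundary curve is not obviously far from anything, because the surface could fold back near it. I would first try the area/collar argument, since it is clean: disjoint half-collars give $\ell\, \sinh w(\ell)\le 2\pi(\cosh D-1)$, and since $\ell\sinh w(\ell)\ge \ell/\sinh(\ell/2)\cdot\ldots$ this only bounds small $\ell$. So for large $\ell$ one genuinely needs the diameter argument, using an embedded collar of width $\epsilon$ and the fact that points at distance $\ell/2$ along $\gamma$ must be connected by a path of length $\le D$ homotopically nontrivial relative to the collar. Once $B\le B(D)$ and $W\le D$ hold, I would check that $\lambda$ from Remark \ref{rem:explicit} is monotone (or at least bounded by a continuous function $<1$) on $\{B\le B(D),\,W\le D\}$. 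Then $\eta(D):=\sup\lambda<1$ and $C$ is taken as the corresponding sup of $C(B,W)$. One caveat is $B\to0$, where $\lambda$ could approach $1$. Here I would use a lower bound on $B$ in terms of $D$ (a short boundary curve forces a wide collar, hence large diameter via $w(\ell)\le D$), giving $B\ge 2\,\arcsinh(1/\sinh D)$.
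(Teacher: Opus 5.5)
\paragraph{Verdict.} Your reduction follows the paper's route, but there is a genuine gap: the upper bound on the boundary length $B$ in terms of $D$, which is the only nontrivial step, does not work as you argue it.

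\paragraph{What is correct.} You use $W\le D$, then $|\chi(X)|\le\cosh D-1$ from the area of a ball of radius $D$. You then plug into the explicit bound $\lambda<1-e^{-3W}/\bigl(16\cosh^3(2W)(B+2W)^3\bigr)$ of Remark~\ref{rem:explicit}. That bound is increasing in $B$ and $W$, so only upper bounds are needed; your lower bound on $B$ is correct but superfluous.

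\paragraph{The per-component bound is false.} Your claim that each boundary component satisfies $\ell(\gamma)\le 2D+c$ fails. Take two right-angled hexagons whose three alternate sides all equal a small $a$, and glue them along those sides to get a one-holed torus.
\begin{itemize}
\item The six boundary sides have length $b$ with $\cosh b=\cosh a/(\cosh a-1)$, so $b\approx 2\log(2/a)$, and the single boundary component has length $6b$.
\item Every point is within bounded distance of a theta graph whose three edges have length $b+O(1)$, so $D=b+O(1)$.
\item Hence $\ell(\partial X)\approx 6D$.
\end{itemize}

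\paragraph{Where the lifting argument breaks.} Different points of $\gamma$ are reached from $p$ by minimizing paths ending on different lifts of $\gamma$. Each lift meets the ball of radius $D$ about $\tilde p$ in a segment of length at most $2D$. Nothing in your argument bounds how many lifts are needed. Observing that antipodal points of $\gamma$ are joined by short essential paths gives no contradiction by itself. The half-collar idea also fails: $\ell\sinh w(\ell)=\ell/\sinh(\ell/2)\to 0$, not $\ell$.

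\paragraph{The missing idea.} You need to count those lifts, and this is where topology enters. In Lemma~\ref{lem:diameter-boundary} the paper does this as follows.
\begin{itemize}
\item Fix $p\in\partial X$ and group minimizing paths from $p$ to $\partial X$ by homotopy class, letting the endpoint slide along its boundary component.
\item Minimizing paths from a common point cannot cross in their interiors. So the essential classes give pairwise disjoint, pairwise non-homotopic essential arcs, and there are at most $\kappa=3|\chi(X)|\le 3(\cosh D-1)$ of them.
\item Add the one class running along the component through $p$.
\item Each class contributes at most $2D$ of boundary, so $B\le 2(\kappa+1)D$.
\end{itemize}

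\paragraph{An alternative that keeps your area approach.} Work with the orthosystole $\mu_0$ rather than individual collars.
\begin{itemize}
\item Its embedded strip of width $\arcsinh(1/\sinh\ell(\mu_0))$ has area at most $2<\area(X)$. So some point of $X$ lies at least that far from $\mu_0$, which gives $\sinh\ell(\mu_0)\ge 1/\sinh D$.
\item The half-collar of width $\ell(\mu_0)/2$ about all of $\partial X$ is embedded. Hence $B\sinh(\ell(\mu_0)/2)\le\area(X)\le 2\pi(\cosh D-1)$.
\end{itemize}
This gives a cruder bound on $B$ (roughly $e^{2D}$), but it is explicit and suffices for the corollary.
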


While the above results all apply to surfaces with non-empty geodesic boundary, in what follows, we focus on closed surfaces. Our first main application is a result about non-filling geodesics. It follows from Lalley's equidistribution theorem for periodic orbits \cite{LalleyPeriodic}, interpreted in terms of geodesic currents \cite{BonahonCurrents} as in \cite[\S2]{AougabSouto}, that asymptotically almost every closed geodesic is filling.

In particular this means that the growth of the number of filling closed geodesics, on a finite-type complete hyperbolic surface, is asymptotic to $e^L/{2L}$ as $L$ grows. Our next results shows that there is an explicit bound, which depends on the geometry of the surface, on the number of its non-filling geodesics. 

\begin{theorem}\label{cor:nf}
There exist constants $a=a(g,\varepsilon)<1$ and $A=A(g,\varepsilon)$ such that
\[
N^{\nf}_X(L) \leq A \, e^{ \,a L}
\]
for any closed hyperbolic surface $X$ of genus $g$ and systole at least $\varepsilon$. 
\end{theorem}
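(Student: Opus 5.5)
Every non-filling closed geodesic $\gamma$ on $X$ lies in a proper subsurface, and we use this to reduce to Theorem \ref{thm:mainestimate}. Let $Y_\gamma$ be the subsurface filled by $\gamma$: take a regular neighbourhood of $\gamma$, add any complementary discs, and realize the boundary curves as geodesics. Since $\gamma$ is not filling, at least one boundary curve is essential and not peripheral, so after discarding annular pieces $Y_\gamma$ is a proper subsurface with nonempty geodesic boundary (possibly disconnected; we treat each component separately). Each boundary geodesic of $Y_\gamma$ is homotopic to a curve lying in the boundary of a neighbourhood of $\gamma$, so it has length at most $2\ell(\gamma)\le 2L$. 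Hence $N^{\nf}_X(L)$ is at most a sum, over all candidate subsurfaces $Y$ of $X$ with total boundary length $B(Y)\le c(g)L$, of $N_Y(L)$.

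We need two estimates. The first is a count of candidate subsurfaces. A subsurface is determined by its boundary multicurve, which has at most $3g-3$ components, each a simple closed geodesic of length at most $2L$. Simple closed geodesics of length at most $2L$ on a closed surface of genus $g$ grow polynomially, and Buser's estimates give an explicit bound $P_{g,\varepsilon}(L)$ depending only on $g$ and the systole bound. Multicurves therefore contribute at most $P_{g,\varepsilon}(L)^{3g-3}$, which is subexponential. The second is a uniform entropy bound for each $Y$. We want Theorem \ref{thm:mainestimate} to give $N_Y(L)\le C e^{\lambda L}$ with $\lambda\le \lambda_0(g,\varepsilon)<1$ and $C\le C_0(g,\varepsilon)$, uniformly over all candidate $Y$. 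For the boundary width, the systole bound $\varepsilon$ and the genus bound the diameter of $X$ by a constant $D(g,\varepsilon)$ (via Mumford compactness, or explicitly by the thick-part argument with area $4\pi(g-1)$). Every point of $Y$ is then within $D$ of $\partial Y$, or else $Y$ would contain $X$, so $W\le D$. For the boundary length, $B\ge\varepsilon$ from below.

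The main obstacle is that $B$ can be as large as order $L$, while Theorem \ref{thm:mainestimate} has $\lambda(B,W)$ and $C(B,W)$ depending on $B$. The claim we need is a monotonicity statement: for fixed $W$, $\lambda(B,W)$ stays bounded away from $1$ as $B$ grows, and $C$ grows at most subexponentially in $B$. This is plausible because longer boundary means more room to shrink lengths with strip maps. We would check it against the explicit constants in Remark \ref{rem:explicit}. If it fails, there is a fallback: the area of $Y$ is at most $4\pi(g-1)$ and $W\le D$, so by the collar lemma each boundary component has length bounded in terms of $g$ and $\varepsilon$. Indeed, a long boundary geodesic would have a large embedded one-sided neighbourhood of width comparable to $\min(D,\cdot)$, which contradicts the area bound unless its length is at most roughly $4\pi(g-1)/\sinh(\text{const})$. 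Since the width only gives a lower bound on that neighbourhood, we would argue instead via the systole: an embedded half-collar of width $w(\varepsilon)$ has area $\ell\sinh w$, so $\ell\le 4\pi(g-1)/\sinh w(\varepsilon)$. This bounds $B$ by $B_0(g,\varepsilon)$, which makes everything uniform over the compact range $[\varepsilon,B_0]\times[0,D]$.

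Combining the two estimates gives
$$N^{\nf}_X(L)\le P_{g,\varepsilon}(L)^{3g-3}\,C_0\,e^{\lambda_0 L}.$$
Absorbing the polynomial factor into the exponential, for any $a$ with $\lambda_0<a<1$ we obtain $N^{\nf}_X(L)\le A\,e^{aL}$ with $A=A(g,\varepsilon)$.
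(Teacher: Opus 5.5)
\textbf{There is a genuine gap: the uniform bound on $\lambda$ when $B$ is of order $L$.} Your reduction is sound and is essentially the paper's. Non-filling geodesics of length at most $L$ live in proper subsurfaces whose boundary has length at most $2L$. There are polynomially many such boundaries, uniformly on the thick part. The width of any such subsurface is at most $\diam(X)\le D(g,\varepsilon)$. The gap is the step you flag yourself: making $\lambda$ uniform when $B=\ell(\partial Y)$ is of order $L$. Neither of your two resolutions works.

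\emph{The monotonicity hope fails for Theorem~\ref{thm:mainestimate}.} Remark~\ref{rem:explicit} only gives $1-\lambda > e^{-3W}/\bigl(16\cosh^3(2W)(B+2W)^3\bigr)$, which tends to $0$ as $B\to\infty$. This degeneration is built into the strip-collapse proof itself. There are $2\kappa=6|\chi|$ boundary sides, so $b_{2\kappa}\geq B/(6|\chi|)$, which forces $p_0\to 0$ and $r_0\to 1$.

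\emph{The fallback is false.} There is no half-collar of uniform width $w(\varepsilon)$ around a long boundary geodesic. The collar width $\arcsinh(1/\sinh(\ell/2))$ decays as $\ell$ grows. The systole does not control it, because a short orthogeodesic from $\alpha$ to itself closes up only through a possibly long arc of $\alpha$. Your inequality $\ell\le 4\pi(g-1)/\sinh w(\varepsilon)$ would bound the length of every simple closed geodesic of $X$, since each is a boundary component of $X\setminus\alpha$. That contradicts the existence of infinitely many simple closed geodesics. Large $B$ genuinely occurs: take $Y=X\setminus\alpha$ with $\alpha$ a long non-separating simple closed geodesic, and $\gamma$ filling $Y$.

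\textbf{The missing idea} is a deformation that shortens the boundary without lengthening any closed curve and without increasing the width much. This is the paper's Lemma~\ref{lem:boundary-reduction}.
\begin{itemize}
\item Long boundary with bounded area forces short orthogeodesics (length at most $\arcsinh 1$), and these are pairwise disjoint.
\item One collapses a suitable region (the $\phi$-region, $\phi=5\pi/12$) inside the collar of each short orthogeodesic, then completes to geodesic boundary.
\item This gives $Y'\cong Y$ with $\ell(\partial Y')\le 4\,\area(Y)$, $w(Y')\le w(Y)+\frac13$, and $\ell_{Y'}(\delta)\le\ell_Y(\delta)$ for every closed curve $\delta$.
\end{itemize}
Then $N_Y(L)\le N_{Y'}(L)$. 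Theorem~\ref{thm:mainestimate} applies to $Y'$ with $B\le 16\pi(g-1)$ and $W\le D+\frac13$, giving a uniform $\lambda_0(g,\varepsilon)<1$. The explicit $\lambda$ needs only upper bounds on $B$ and $W$, so your lower bound $B\ge\varepsilon$ is unnecessary.

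A minor point: a simple $\gamma$ fills only an annulus, so simple geodesics must be counted separately. They are only polynomially many, so this is harmless.
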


The corresponding statement for finite-area surfaces can also be obtained by treating the cusps separately, but for simplicity, we restrict to closed surfaces here. This result uses the following observation: if a curve is non-filling, then there is a simple closed geodesic on its complement. Hence, to bound the number of non-filling geodesics, it suffices to bound the number of simple closed geodesics and, for each of them, use Theorem \ref{thm:mainestimate} to bound the number of curves on their complement. 

In particular, the above corollary immediately proves that there is a first, quantifiable gap in the entropy spectrum:

\begin{corollary}\label{cor:gap}
For any closed hyperbolic surface $X$ of genus $g$, we have 
\[
\sup_{Y \subsetneq X} h(Y) < 1
\]
where $Y$ is taken among all proper subsurfaces, and this supremum is bounded above by a function of the geometry of $X$. In particular, there is a quantifiable gap between $1$ and the other entropies in $\E(X)$.
\end{corollary}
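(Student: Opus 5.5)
The plan is to deduce Corollary \ref{cor:gap} from Theorem \ref{cor:nf}, using the fact that closed geodesics of a proper subsurface are non-filling in $X$.

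Let $X$ be closed of genus $g$ with systole $\varepsilon=\sys(X)>0$, which is positive since $X$ is compact. Take a proper subsurface $Y\subsetneq X$. We may realize $Y$ as an essential subsurface with geodesic boundary, i.e. a union of components of $X$ cut along a multicurve. Its entropy $h(Y)$ is the exponential growth rate of $N_Y(L)$, the number of closed geodesics of $Y$ of length at most $L$. Since $Y$ has geodesic boundary, it is locally convex. So every closed geodesic of $Y$ is a closed geodesic of $X$ of the same length, and distinct closed geodesics of $Y$ stay distinct in $X$.

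Each such geodesic lies in $Y$ and therefore misses the boundary curves of $Y$. Since $Y$ is proper, at least one of these curves is an essential simple closed geodesic of $X$. A closed geodesic disjoint from an essential simple closed curve is non-filling. (Peripheral boundary geodesics of $Y$ are non-filling as well.) Hence
\[
N_Y(L)\le N^{\nf}_X(L)\le A(g,\varepsilon)\, e^{\,a(g,\varepsilon)L}
\]
by Theorem \ref{cor:nf}. The orientation and primitivity conventions only change constant factors.

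Taking $\limsup_{L\to\infty}\frac{1}{L}\log$ of both sides gives $h(Y)\le a(g,\varepsilon)$. Here we use that the entropy equals the growth rate of closed geodesics, as recalled in the introduction via Huber-type asymptotics $e^{hL}/(hL)$. This bound is uniform in $Y$, so
\[
\sup_{Y\subsetneq X} h(Y)\le a(g,\sys(X))<1 .
\]
This bound depends only on the geometry of $X$, through $g$ and $\sys(X)$. Because $h(X)=1$ for the closed surface itself, the interval $\,]a(g,\sys(X)),1[\,$ contains no element of $\E(X)$. This is the claimed quantifiable gap.

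The main point needing care is the identification of $h(Y)$ with the growth exponent of closed geodesics inside $Y$. We need geodesics of $Y$ to inject into non-filling geodesics of $X$ without any length distortion. Choosing $Y$ with geodesic boundary settles this: convexity makes the lengths agree, and distinct free homotopy classes in $Y$ stay distinct in $X$ because essential subsurfaces are $\pi_1$-injective and conjugacy classes do not merge. Everything else is a direct consequence of Theorem \ref{cor:nf}.
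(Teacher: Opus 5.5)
Your proposal is correct and matches the paper's proof (given as Corollary~\ref{cor:gapexplicit}) essentially verbatim: closed geodesics of a proper subsurface $Y$ are non-filling in $X$, so $N_Y(L)\leq N^{\nf}_X(L)$, and Theorem~\ref{cor:nf} with $\varepsilon=\sys(X)$ then gives the uniform bound $h(Y)\leq a(g,\sys(X))<1$. Your remarks on geodesic boundary, $\pi_1$-injectivity and identifying entropy with the growth rate of closed geodesics only make explicit what the paper leaves implicit.
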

In fact, a more precise version is shown (Corollary \ref{cor:gapexplicit})) where the supremum is shown to be bounded above by a function of the genus and the systole of $X$. Theorem \ref{thm:welldone} below will show that the supremum above is in fact a maximum.

We now move to structural results about the entropy spectrum $\E(X)$ of a closed surface $X$. For the purpose of describing accumulation points, we adopt the convention that the empty subsurface has entropy $0$. Our main structural result is the following. 

\begin{theorem}\label{thm:welldone}
The entropy spectrum is a reverse well-ordered multiset. Its accumulation points correspond exactly to the entropies of connected subsurfaces $Y$ such that $X\setminus Y$ has curve complexity at least $1$. 
\end{theorem}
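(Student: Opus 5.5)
The plan is to reduce everything to one convergence statement for sequences of distinct subsurfaces, and to combine it with strict monotonicity of entropy under inclusion. Throughout, subsurfaces of the closed surface $X$ are taken with geodesic boundary. For disconnected subsurfaces the entropy is the maximum over components.

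\textbf{Two basic facts.}
First, \emph{monotonicity}: if $Y \subsetneq Z$ are subsurfaces and $Z$ is connected, then $h(Y) < h(Z)$. The non-strict inequality is immediate, since the closed geodesics of $Y$ are closed geodesics of $Z$. For strictness I would use the limit set interpretation. The fundamental group of $Y$ is an infinite index subgroup of the convex cocompact group of $Z$. For convex cocompact Fuchsian groups, the critical exponent is strictly smaller for such a subgroup; this is the standard strict inequality for non-elementary groups.

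Second, \emph{finiteness}: for any $K$, only finitely many subsurfaces have total boundary length at most $K$. This holds because $X$ has only finitely many closed geodesics of length at most $K$.

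\textbf{Key Lemma (convergence).} Let $(Y_n)$ be a sequence of pairwise distinct connected subsurfaces. Then, after passing to a subsequence, there is a (possibly empty) subsurface $Y$ such that $Y \subsetneq Y_n$ for all $n$ and $h(Y_n)\to h(Y)$. I would prove it in the following steps.
\begin{enumerate}
\item Pass to a subsequence so that the topological type is constant. Since there are only finitely many short multicurves, also arrange that the set of boundary curves of $Y_n$ of length $\leq K$ is eventually constant, for each $K$; a diagonal argument handles all $K$ at once.
\item Let $Y$ be the maximal subsurface whose boundary consists of these stabilized short curves and which is contained in every $Y_n$. Its complement in $Y_n$ is a subsurface $Z_n$ of bounded topology and bounded area. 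The boundary width of $Z_n$ is bounded by $\diam(X)$. Every boundary component of $Z_n$ that is not a boundary curve of $Y$ has length tending to infinity.
\item The lower bound $h(Y_n)\geq h(Y)$ follows from the first basic fact.
\item For the upper bound, write each closed geodesic of $Y_n$ as a concatenation of arcs in $Y$ and arcs in $Z_n$. The explicit estimates behind Theorem \ref{thm:mainestimate}, with boundary length $B\to\infty$ and width $W$ bounded, should give that the exponential growth rate of arcs in $Z_n$ tends to $0$. The growth of arcs in $Y$ is at most $e^{(h(Y)+\epsilon)L}$. A transfer-matrix or concatenation count then gives $\limsup h(Y_n)\leq h(Y)+o(1)$.
\end{enumerate}

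\textbf{Consequences.}
Reverse well-ordering: suppose $h(Y_n)$ were strictly increasing. The Key Lemma gives a subsequence with $h(Y_n)\to h(Y)$ and $h(Y)<h(Y_n)$ for every $n$. A strictly increasing sequence cannot converge to a value below all its terms, so this is a contradiction.

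Finite multiplicities: suppose infinitely many distinct $Y_n$ shared the same entropy $c$. The Key Lemma gives $c = h(Y) < c$, which is absurd.

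Accumulation points, first direction: any accumulation point arises as such a limit $h(Y)$. Here $Y\subsetneq Y_n$ for infinitely many distinct $Y_n$. Then $X\setminus Y$ contains infinitely many distinct curves, so some component of the complement is not a pair of pants, i.e. has curve complexity at least $1$.

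Accumulation points, converse: suppose $X\setminus Y$ has a component $W$ of complexity at least $1$. Take a sequence of distinct simple closed geodesics $\gamma_n$ in $W$, and a fixed boundary curve of $Y$ adjacent to $W$. Let $Y_n$ be $Y$ union a subsurface of $W$ bounded by curves including $\gamma_n$, for example a pair of pants or one-holed torus glued along $\partial Y$. The $Y_n$ are distinct and their new boundary lengths tend to infinity. The estimate of step 4 gives $h(Y_n)\to h(Y)$, and $h(Y_n)>h(Y)$ by monotonicity. Hence $h(Y)$ is an accumulation point.

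\textbf{Main obstacle.} The hardest step is step 4, the upper bound in the Key Lemma. It requires a version of Theorem \ref{thm:mainestimate} for arcs with endpoints on the boundary, not just closed curves, together with the fact that $\lambda(B,W)\to 0$ as $B\to\infty$ with $W$ bounded. I would first try to extract both from the strip map comparison: long boundary allows a large length contraction onto a cusped surface. I would then do the concatenation count carefully, so that the multiplicative constants do not spoil the exponent. Strict monotonicity I would take from the literature rather than reprove.
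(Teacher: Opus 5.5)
\textbf{There is a genuine gap in Steps 2 and 4 of your Key Lemma.} You build $Y$ only from boundary curves of $Y_n$ that stay short. You then rely on the claim that bounded width together with boundary length $B\to\infty$ drives the growth rate of $Z_n$ to $0$. That claim is false. In genus $3$, fix a one-holed torus $T$, a one-holed torus $T'\subset X\setminus T$, and a curve $\alpha\subset X\setminus T$ crossing $\partial T'$. Then the surfaces $Y_n=\tau_\alpha^n(X\setminus T')$ are distinct connected subsurfaces of width at most $\diam(X)$ with $\ell(\partial Y_n)\to\infty$. Yet each contains $T$, so $h(Y_n)>h(T)>0$. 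Your recipe gives $Y=\emptyset$ here and predicts $h(Y_n)\to 0$.

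More generally, a subsurface of the connected $Y_n$ whose boundary lies in $\partial Y_n$ is, up to annuli, empty or all of $Y_n$. So your $Y$ can never contain the gluing curves, which are interior to $Y_n$. Your Key Lemma would then give limit $0$ for every sequence, which contradicts your own converse.

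Nor can Theorem \ref{thm:mainestimate} supply the decay. It compares with a cusped surface of entropy $1$, so it only ever gives $h\le\lambda<1$. For fixed topology, $\lambda\to 1$ as $B\to\infty$: the longest boundary arc is at least $B/(2\kappa)$, which forces $A_0\to\infty$ and $r_0\to 1$ (compare Remark \ref{rem:explicit}).

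\textbf{What is missing.} The quantity that must blow up is the \emph{interior} systole of the complementary pieces, and the estimate that turns this into vanishing entropy is of a different kind.

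\begin{enumerate}[(1)]
\item As in the proof of Theorem \ref{thm:accumulation}, take $Y$ to be the maximal subsurface spanned by all curves, interior ones included, that lie in infinitely many $Y_n$. Finiteness of short geodesics in $X$ then forces $\sys(Z_n)\to\infty$.
\item The decay comes from Theorem \ref{thm:high-systole-entropy}, in the form of Corollary \ref{cor:high-systole-width-entropy}. With bounded topology and width, $h(Z_n)=O(\log S_n/S_n)$, where $S_n=\sys(Z_n)$. This is proved by pushing curves onto the boundary-Voronoi carrier graph at cost $\cosh W$ and noting that its tangle length is at least $S_n/2$.
\item Your concatenation count also needs the relative form, Lemma \ref{lem:quantitative-high-systole-counting}. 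Essential orthogeodesics of $Z_n$ between gluing curves have length at least $S_n/2$ minus a constant. They are counted with constants $K_n$ satisfying $\log K_n=o(S_n)$, which is exactly what makes the per-excursion factor in the proof of Theorem \ref{thm:entropy-convergence} tend to $0$.
\item The case where the persistent part is only a union of curves is handled by the same count and gives limit $0$.
\end{enumerate}

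With this replacement, the rest of your plan coincides with the paper's argument in Theorems \ref{thm:accumulation} and \ref{thm:order}. That includes strict monotonicity, the deduction of reverse well-ordering and finite multiplicities, and both directions for accumulation points. In the converse, attaching pants is harmless, because one long cuff forces long figure-eights.
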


In particular, the reverse well-ordered result implies that there is a largest value in $\E(X)$ which is not $1$, and which we denote by $h_*(X)$ and call the {\it largest proper entropy}. Our results show that in fact this value is realized as the entropy of a subsurface obtained by taking the complement of a non-separating simple closed geodesic (see Corollary \ref{cor:proper}). The quantity $1-h_*(X)>0$ can be thought of as an entropy gap. We observe that, using results of Lenzhen and Souto \cite{LenzhenSouto}, it is exactly half the Hausdorff codimension of the set of complete non-filling geodesics (see Remark \ref{rem:LS}).

While the results and methods are very different, one motivation for the above theorem lies in the work of Fujiwara and Sela on growth of subgroups of hyperbolic groups and relatives \cite{FujiwaraSela2023,Fujiwara2025}. In their context, by varying the generating set, they show that the orders of growth are well-ordered, which is seemingly the opposite result to ours. While this might seem surprising, this is mainly because the limiting phenomenon is opposite: group growth degenerations approximate the limit from below, whereas in our context, subsurfaces approximate entropies from above. 

\noindent {\bf Organization.}
This paper is organized as follows. In Section \ref{sec:setup}, we setup notation, recall useful results, define the entropy spectrum, and prove marked rigidity of the entropy spectrum (Theorem \ref{thm:marked}). In Section \ref{sec:bound} we prove Theorem \ref{thm:mainestimate} and consequences, including Theorem \ref{cor:nf}. This also requires a technical boundary reduction result (Lemma \ref{lem:boundary-reduction}). In Section \ref{sec:entropy}, we prove the main structural results about the entropy spectrum, namely Theorem \ref{thm:welldone}. 

\noindent {\bf Acknowledgements.}

We are grateful to all the people we discussed this work with and who made numerous suggestions and helpful comments. In particular, thanks to Marie Abadie, Nalini Anantharaman, Viveka Erlandsson, Federica Fanoni, Jonah Gaster, Didac Martinez-Granado and Juan Souto. We also thank the Bernoulli center (EPFL) for their support during the writing of this paper.

\section{Definitions, setup and preliminary results}\label{sec:setup}

In addition to setting notation, in this section we define the main objects of study and state some preliminary results. The only new result is the proof of the marked rigidity of the entropy spectrum (Theorem \ref{thm:marked}).

\subsection{Curves, geodesics and orthogeodesics}\label{ss:curves}
We consider connected, orientable hyperbolic surfaces $X$ of finite type with boundary consisting in simple closed geodesics or cusps. Often the genus of $X$ is denoted $g$ and its number of boundary components by $n$. The Euler characteristic of $X$ is then $\chi(X)=\chi = 2-2g-n$ and, because we suppose $X$ to be hyperbolic, $\chi(X) <0$. 

By curve we mean a closed curve, which we will always suppose to be non-trivial, meaning homotopically non-trivial and non-parallel to a single boundary element, and primitive, meaning not freely homotopic to a $k$-iterate of another closed curve for $k\geq 2$. We consider curves as non-oriented, and we readily use the fact that a curve on a hyperbolic surface is freely homotopic to a unique closed geodesic, which in turn is the unique length minimizer in the free homotopy class. For a curve $\gamma$ on a hyperbolic surface $X$, we denote by $\ell_X(\gamma)$ the length of this unique closed geodesic (or simply by $\ell(\gamma)$ if the surface $X$ is clear from the context). 

The curve complexity of a surface is its maximal number of non-peripheral disjoint simple closed curves. This is the number of interior curves in a pants decomposition. Thus a surface has curve complexity at least $1$ if one of its connected components contains a non-peripheral essential simple closed curve. More generally, it has curve complexity at least $k$ if one of its components is of type $(g,n)$ with $3g-3+n\geq k$.

The systole of $X$ (or interior systole of $X$) is the length $\sys(X)$ of the shortest interior closed geodesic. A curve is simple if it is homotopic to a simple closed curve, and if this is the case, its geodesic representative is also simple. If $X$ is not a pair of pants (meaning if $X$ is not homeomorphic to three holed sphere) then a systolic geodesic (often called systole by abuse of notation) is simple. On a pair of pants it is always a so-called figure-$8$ curve, that is a curve with one self-intersection point. When $X$ is closed, it is of genus $g\geq 2$ and we denote by $\M_g$ the moduli space of all hyperbolic structures up to isometry. The $\varepsilon$-thick part of moduli space, consisting of surfaces in $\M_g$ of systole at least $\varepsilon>0$, is denoted $\M_g^{\varepsilon}$. Similarly, we denote by $\T_g$ the Teichm\"uller space of marked hyperbolic surfaces up to isotopy. 

On a surface with geodesic boundary, an orthogeodesic is a geodesic arc orthogonal to the boundary in both of its endpoints. Like for curves, each free homotopy class of arc with endpoints on the boundary (but not fixed by the homotopy), has a unique orthogeodesic representative which again is the unique length minimizer. As for curves and closed geodesics, we denote by $\ell_X(\mu)$ the length of an orthogeodesic $\mu$. 

The collar lemma states that around a simple closed geodesic $\gamma$, there is an embedded collar of width 
\[
\arcsinh \left( \frac{1}{\sinh\left(\frac{\ell(\gamma)}{2}\right)} \right).
\]
If $\gamma$ is a boundary geodesic, only one side of the collar is embedded. The area of a half-collar around a simple closed geodesic $\gamma$, which can be a boundary curve, of width $r$ is given by the formula 
\[
\ell(\gamma) \sinh r.
\]
In particular, if $X$ has boundary length $\ell(\partial X)$, then if $r$ satisfies 
\[
r\geq \arcsinh\left( \frac{1}{\ell(\partial X)} \area(X) \right)
\]
then the $r$ collar around $\partial X$ is not embedded, otherwise the area of the $r$-neighborhood of the boundary would exceed the area of the surface. This implies that $X$ always has a orthogeodesic of length less than twice this quantity. Said otherwise, and using the terminology of \cite{BasmajianFanoni}, an orthosystole $\mu_0$ (the, or a, shortest orthogeodesic of $X$) satisfies
\[
\ell(\mu_0) < 2 \arcsinh\left( \frac{1}{\ell(\partial X)} \area(X) \right).
\]
A more precise bound can be found in the work of C. Bavard \cite{Bavard} who has tight bounds on the length of the orthosystole among all surfaces with fixed boundary length, generalizing previous work of Schmutz Schaller for surfaces with cusps \cite{Schmutz}. 

There is also a collar lemma for simple orthogeodesics. One way to see this is by doubling the surface along its boundary, a procedure which transforms simple orthogeodesics to simple closed geodesics. There is a embedded strip around an orthogeodesic $\mu$ of collar width 
\[
\arcsinh \left( \frac{1}{\sinh(\ell(\mu))} \right).
\]
Note that, unlike in the corresponding formula for simple closed geodesics, the length is not divided by two (this follows from the doubling argument stated above). Importantly, if two simple orthogeodesics are disjoint, their corresponding strips are disjoint.

The results stated above are standard facts on the hyperbolic geometry of surfaces, and can often be proved using hyperbolic trigonometry. We refer to Buser's book \cite{BuserBook} for specific formulas, proofs and references for the different collar lemmas. When, in the sequel, we say that we are using standard hyperbolic trigonometry, more often than not. we using a formula from \cite[p. 454]{BuserBook}.

\subsection{Surfaces with boundary geodesics and length expansion}

Let $X$ be a compact hyperbolic surface with nonempty geodesic boundary. Its
\emph{width} is
\[
 w(X)=\max_{x\in X} d(x,\partial X).
\]
The width can be estimated using other invariants, such as systole length and area.

Let $\varepsilon$ be the minimum length among all closed geodesics of $X$, including boundary geodesics. In other words, $\varepsilon$ is the min of the (interior) systole length and the length of a smallest boundary component. Now by an area argument, we have:
\begin{equation}\label{eq:systolewidth}
w(X) \leq \frac{1}{2 \sinh \frac{\varepsilon}{4}} \,\area(X)
\end{equation}
This is because the $\frac{\varepsilon}{4}$ strip around a distance path from a point to $\partial X$ is embedded on $X$, and its area, equal to $2 w(X) \sinh \frac{\varepsilon}{4}$ for a point at distance $w(X)$, is less than the total area of the surface. 

The following (well-known) result is useful to keep in mind, namely in terms of the proof strategy, There are multiple sources for it and its uses (see for instance \cite{ThurstonSpine}, \cite{DGK}, \cite{Gueritaud2016}, \cite{ParlierLengths} and \cite{Papadopoulos-Theret}).

\begin{lemma}[Length expansion lemma]\label{lem:lengthexpansion} Let $X$ be a finite-type hyperbolic surface with boundary geodesics of length $(\ell_1,\hdots,\ell_n)$ and let $\varepsilon>0$. Then there exists a hyperbolic surface $X' \cong X$ with boundary geodesics of length $(\ell_1+\varepsilon,\hdots,\ell_n)$ and such that any non-trivial simple closed curve $\gamma\subset \Sigma$ satisfies
\[
\ell_{X'}(\gamma)>\ell_{X}(\gamma).
\]
\end{lemma}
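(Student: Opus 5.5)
The natural route is Thurston's strip deformation: cut $X$ along a simple orthogeodesic that starts and ends on the first boundary component $\beta_1$ (of length $\ell_1$), insert a thin hyperbolic strip, and reglue. Every curve crossing the arc then gets strictly longer, and every curve disjoint from it is unaffected. To get strict growth for \emph{all} non-trivial simple closed curves, the plan is to use a finite family of disjoint simple orthogeodesics $\mu_1,\dots,\mu_k$ with both endpoints on $\beta_1$, chosen so that they fill relative to the other boundary components. The natural choice cuts $X$ into pieces that are discs or annuli around the other boundary components $\beta_2,\dots,\beta_n$. Then every non-trivial, non-peripheral simple closed curve must intersect some $\mu_i$; a curve disjoint from all of them would sit in a disc or a peripheral annulus. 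Such a system exists topologically: take arcs from $\beta_1$ cutting the surface into a disc plus peripheral annuli. Straighten each arc to its orthogeodesic representative; disjoint simple arcs in distinct homotopy classes have disjoint simple orthogeodesic representatives.

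The strip insertion goes as follows. For each $\mu_i$, cut $X$ along $\mu_i$ and glue in a hyperbolic strip: the region in $\Hyp$ between two ultraparallel geodesics, truncated by the common perpendicular direction. Concretely, take the region bounded by two geodesics that are orthogonal to a common geodesic segment of length $\delta_i$ at its endpoints. Glue its two long sides to the two copies of $\mu_i$, with the midpoint of the strip's waist at a chosen point $p_i\in\mu_i$. The result $X'$ is again a hyperbolic surface homeomorphic to $X$ with geodesic boundary. The new boundary component replacing $\beta_1$ has length equal to $\ell_1$ plus the sum of the strip widths measured along the boundary. These widths are strictly increasing continuous functions of $\delta_i$ that vanish at $\delta_i=0$, so by continuity and the intermediate value theorem we can choose the $\delta_i$ to get total increase exactly $\varepsilon$. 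The other boundary components $\beta_j$, $j\ge 2$, are untouched, so their lengths are unchanged.

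It remains to show lengths increase. There is a natural $1$-Lipschitz map $X'\to X$ that collapses each strip onto $\mu_i$ via nearest-point projection to the gluing line; it is $1$-Lipschitz because projection onto a geodesic in $\Hyp$ is. Hence $\ell_{X'}(\gamma)\ge \ell_X(\gamma)$ for every curve $\gamma$. For strictness, take the $X'$-geodesic representative of $\gamma$. Since $\gamma$ intersects some $\mu_i$ essentially, the geodesic crosses the strip inserted along $\mu_i$. Its image in $X$ is then a curve in the class of $\gamma$ that is not itself geodesic, or on which the projection strictly contracts: the projection contracts every non-degenerate segment crossing the strip transversally, since a geodesic segment joining the two sides of the strip has length strictly greater than the distance between the projections of its endpoints. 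Either way the image has length strictly less than $\ell_{X'}(\gamma)$, while being at least $\ell_X(\gamma)$.

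The step I expect to be the main obstacle is this strictness argument, in particular making the projection map well defined and $1$-Lipschitz across the gluing. Near the boundary $\beta_1$, points of the strip project onto the endpoints of $\mu_i$ consistently with the boundary. The cleanest way I would try first is to realize the deformation in the universal cover. There the strip becomes a union of equivariant strips, and the collapsing map is piecewise a nearest-point projection glued along geodesics, which is $1$-Lipschitz by convexity. Strictness then follows because a geodesic crossing a strip of width $\delta_i>0$ transversally gains at least some positive amount of length.
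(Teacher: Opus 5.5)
The paper does not prove this lemma. It quotes it as well known, citing Thurston, Danciger--Gu\'eritaud--Kassel, Papadopoulos--Th\'eret and others. Your route is the standard strip-deformation argument from those references: widen $X$ along disjoint orthogeodesics $\mu_1,\dots,\mu_k$ based on $\beta_1$ that cut $X$ into a disc and peripheral annuli or punctured discs. Your topological input is fine: the arc system exists, the orthogeodesic representatives are disjoint, and every non-peripheral curve crosses some $\mu_i$. Two steps, however, fail as written.

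\textbf{First gap: the glued surface does not have geodesic boundary.} Cutting the \emph{compact} surface along $\mu_i$ and inserting a truncated strip cannot produce geodesic boundary. The two sides of the strip have a unique common perpendicular, namely the waist. So any arc closing off the strip at an endpoint of $\mu_i$ meets the sides non-orthogonally. For geodesic closing arcs these are summits of Saccheri quadrilaterals, with acute angles $\theta$. The new boundary curve therefore has $4k$ corners of interior angle $\pi/2+\theta<\pi$.

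Consequently your $X'$ is not a surface with geodesic boundary. The geodesic in the class of $\beta_1$ is strictly shorter than $\ell_1$ plus the widths you insert, so your boundary-length formula is wrong.

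The fix is to work in the funnel completion $\overline X$:
\begin{enumerate}[(i)]
\item Extend each $\mu_i$ to the complete geodesic $\bar\mu_i$. It runs orthogonally into the funnel of $\beta_1$ at both ends, so these lines are pairwise disjoint and miss $\beta_2,\dots,\beta_n$.
\item Cut along the $\bar\mu_i$ and insert full strips of waist $\delta_i$.
\item Take the convex core $X'_\delta$. Equivalently, tighten your cornered boundary, which is locally convex.
\end{enumerate}
Then $\beta_2,\dots,\beta_n$ keep their lengths and still bound funnels. The function $\delta\mapsto\ell_{X'_\delta}(\beta_1)$ is continuous and equals $\ell_1$ at $\delta=0$. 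It is at least $2\sum_i\delta_i$, because per period the new axis must cross two strips for each $i$. The intermediate value theorem then produces $\ell_1+\varepsilon$ with no explicit formula needed.

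\textbf{Second gap: the collapse map is not well defined.} Nearest-point projection onto one side $L$ of a strip is incompatible with the gluing. It sends the other side $L'$ onto a bounded segment of $L$ rather than isometrically, so your map is not defined on $X'$.

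The correct collapse moves points along the hypercycles of the waist. In Fermi coordinates $(s,u)$ along the waist, the strip is $\{0\le s\le\delta_i\}$ with metric $du^2+\cosh^2 u\,ds^2$. The map $(s,u)\mapsto(0,u)$ restricts to the gluing isometries on both sides and is $1$-Lipschitz. It strictly shortens every crossing: for $x=L(u_1)$ and $y=L'(u_2)$,
\[
\cosh d(x,y)=\cosh(u_1-u_2)+\cosh u_1\cosh u_2\,(\cosh\delta_i-1)>\cosh(u_1-u_2).
\]

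With these two repairs your strictness argument goes through. It in fact proves more than the lemma: every closed geodesic not homotopic to a power of one of $\beta_2,\dots,\beta_n$ becomes strictly longer, simple or not.
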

The proof of Theorem \ref{thm:mainestimate} quantifies the ratio by which every curve must decrease. 

\subsection{Counting curves}

In the case of closed surfaces or surfaces with cusps, there is an immediate effective upper bound on the number of primitive closed geodesics of length less than $L$. Denote by $N_X(L)$ the number of (non-oriented) primitive closed geodesics of length less than $L$.

The following lemma is just a slight adaptation of Lemma 6.6.4 from \cite{BuserBook}. The original argument, for closed surfaces, is to use a type of $\varepsilon$-net of the thick part of a surface and to then count the number of disjoint balls in the universal cover. Here we provide a quick argument to show how to adapt it to surfaces with cusps.

\begin{lemma}\label{lem:buser}
Let $X$ be a hyperbolic surface of genus $g$ with $n$ cusps. Then 
\[
N_X(L) \leq (g+ \lfloor n/2 \rfloor) \,e^{L+6}.
\]
\end{lemma}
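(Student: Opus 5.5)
We adapt the proof of Lemma 6.6.4 in \cite{BuserBook}: closed geodesics are counted by lifting them to a fundamental domain and counting group elements that translate a fixed point by a bounded distance. The only new feature is the cusps. We handle them by a thick--thin decomposition, replacing each closed geodesic by a based loop in the thick part.

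First, cut off standard horoball neighborhoods of the cusps, namely the cusp regions bounded by horocycles of length $2$. These are embedded and pairwise disjoint. Call the remaining compact part $X_0$. A closed geodesic never enters the region bounded by a horocycle of length $2$ (it cannot penetrate a cusp neighborhood beyond the length-$2$ horocycle), so every closed geodesic meets, and in fact lies in, $X_0$. Next, choose a maximal collection of points $p_1,\dots,p_m$ in $X_0$ whose pairwise distances are at least $\frac12$, or some fixed small constant. The balls of radius $\frac14$ about these points are disjoint. One has to check that they are embedded, or at least have a definite area: near a short geodesic they are not embedded, so instead we use the covering argument in the universal cover as Buser does. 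By an area count, $m \le \area(X)/\area(B_{1/4}) \cdot c = 2\pi(2g-2+n)\cdot c$. Buser's trick is sharper: he takes one point per thick component and uses the fact that each closed geodesic of length $\le L$ passes within distance about $1$ of a chosen point. The factor $g+\lfloor n/2\rfloor$ comes from $\tfrac12|\chi(X)| = g-1+n/2$, up to adjusting by one.

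Second, given a closed geodesic $\gamma$ of length $\le L$ passing near some $p_i$, we produce a loop based at $p_i$ of length $\le L+2\delta$ in the same free homotopy class. Lifting to $\Hyp$ with $\tilde p_i$ fixed, this loop gives a deck transformation $T$ with $d(\tilde p_i, T\tilde p_i)\le L+2\delta$. Distinct primitive geodesics through the neighborhood give distinct $T$, possibly up to a bounded multiplicity. The translates $T\tilde p_i$ have disjoint balls of radius $r$, where $r$ is the injectivity radius at $p_i$ (which is at least a constant in the thick part, since we may choose the $p_i$ there). Comparing areas,
\[
\#\{T\} \le \frac{\area B(L+2\delta+r)}{\area B(r)} \le \frac{\cosh(L+2\delta+r)-1}{\cosh r-1}\le C e^{L}.
\]
Summing over $i$ gives $m\,C e^{L}$. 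Choosing $\delta$ and $r$ as in Buser, so that $e^{2\delta+r}/(\cosh r -1)$ and the count of points combine to at most $e^6(g+\lfloor n/2\rfloor)$, gives the stated bound.

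The main obstacle is the point-selection step. We must guarantee that every closed geodesic passes close to one of a controlled number of base points with a uniform lower bound on the injectivity radius, both near cusps (handled by the horocycle of length $2$ argument) and near short closed geodesics. For the latter we would use the collar lemma: a geodesic entering a collar of a short curve must cross it and so exit into the thick part, or else it is the core curve itself, which we count separately. That curve, together with the single thick point per pair of pants in a pants decomposition (there are $2g-2+n$ pants), yields the linear dependence $g+\lfloor n/2\rfloor$ after pairing constants into $e^6$.
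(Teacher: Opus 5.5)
Your strategy of rerunning Buser's net-and-orbit count directly on the cusped surface can be made to work. As written, though, there is a genuine gap at exactly the step you flag, and the way you propose to close it fails. The count needs a \emph{universal} constant $\delta$ with the following property: every primitive closed geodesic other than the short cores passes within $\delta$ of some base point whose injectivity radius is bounded below by a universal constant. This matters because $\delta$ enters the per-point bound as $e^{2\delta}$. Taking one point per thick component, or one thick point per pair of pants, does not give this. A geodesic crossing such a piece need not come near the chosen point, and these pieces have no uniform diameter bound: a closed surface without short geodesics is a single thick component of diameter at least of order $\log g$. Two auxiliary claims are also false, though not essential. Closed geodesics \emph{do} enter the cusp region bounded by the horocycle of length $2$; only simple ones stay out. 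Normalizing the cusp by $z\mapsto z+1$, a geodesic winding $k$ times around it reaches height about $k/2$, far above the height $1/2$ of that horocycle. Likewise, a geodesic entering a collar need not cross it. All you actually need is that collars and cusp regions are annuli, so the only closed geodesics contained in their union are the short cores. Finally, the constant $e^{6}$ is asserted rather than checked, and it is sensitive to the choices: a $\tfrac12$-separated net already gives roughly $500\,(g-1+n/2)e^{L}$, which is too large.

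The repair is your first idea restricted to the thick part $X_{\geq}=\{x:\mathrm{inj}(x)\geq\arcsinh 1\}$. Its complement lies in the disjoint collars of the geodesics of length at most $2\arcsinh 1$ and the length-$2$ cusp regions \cite{BuserBook}. Take a maximal $\rho$-separated set $p_1,\dots,p_m$ in $X_{\geq}$ with $\rho=\arcsinh 1$. The $\rho/2$-balls about the $p_i$ are embedded and disjoint, so $m\leq(2g-2+n)/(\cosh(\rho/2)-1)$. Every non-core closed geodesic passes within $\rho$ of some $p_i$, which gives a deck transformation $T$ with $d(\tilde p_i,T\tilde p_i)\leq L+2\rho$. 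Since the $\rho$-balls about the orbit of $\tilde p_i$ are disjoint, there are at most $(\cosh(L+3\rho)-1)/(\cosh\rho-1)\leq\frac12(1+\sqrt2)^4e^{L}$ such $T$. The product is about $344\,(g-1+n/2)e^{L}$, and adding the at most $3g-3+n$ short cores stays below $(g+\lfloor n/2\rfloor)e^{L+6}$.

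The paper, by contrast, does not rerun Buser's argument. It glues the cusps in pairs (first adding a once-punctured torus if $n$ is odd) to view $X$ inside a noded surface of genus $g'=g+\lceil n/2\rceil$, then quotes Buser's bound $(g'-1)e^{L+6}\leq(g+\lfloor n/2\rfloor)e^{L+6}$. That route is much shorter. Your repaired direct argument instead gives the area-proportional factor $g-1+n/2$. It also makes explicit why cusps are harmless, which the paper's appeal to Buser's lemma for a noded rather than compact surface takes for granted.
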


\begin{proof}
If $n$ is even, by identifying cusps in pairs, one obtains a noded surface of genus $g + n/2$. If $n$ is odd, by adding a once-punctured torus, we can obtain a noded surface of genus $g + \lfloor n/2 \rfloor + 1$. In either case, we can apply Lemma 6.6.4 from \cite{BuserBook} to obtain the result.
\end{proof}

As mentioned in the introduction, Huber \cite{Huber} proved an asymptotic formula for the number of primitive closed geodesics on finite area geodesically complete hyperbolic surfaces (so either closed or finite area with cusps), namely 
\[
N_X(L) \underset{L\to +\infty} \sim\frac{e^L}{2L}.
\]
Note that the $2$ in the denominator just comes from taking unoriented geodesics. More generally, if $X$ has geodesic boundary, then
\[
N_X(L) \underset{L\to +\infty} \sim \frac{e^{h_XL}}{2h_XL}
\]
where $h_X$ is the entropy of $X$. The entropy can mean either the volume entropy or the topological entropy of the geodesic flow on $X$, so to get the above statement for all of these notions, one must combine the work of several authors \cite{Guillope,Manning,Margulis,ParryPollicott,Sullivan}. 

In any case, with these theorems in hand, the entropy of $X$ is the result of the following limit:
\[
h_X:= \lim_{L\to +\infty} \frac{\log\left(N_X(L)\right)}{L}.
\]
Notable properties of $h_X$ include the fact that it is directly related to the Hausdorff dimension of the limit set (of the surface group associated to $X$ as it acts on the hyperbolic plane), and in particular this implies that if $X$ has geodesic boundary, then $h_X<1$, whereas if not, $h_X=1$ (see for instance \cite{Beardon1971}). 

\subsection{The entropy spectrum}
We end this section with the definition of our main object of study, the entropy spectrum. In short, for a hyperbolic surface $X$, it is the collection of entropies of its subsurfaces. Here is more formal definition.

For a hyperbolic surface $X$, we consider its subsurfaces which are considered to be essential (no boundary component is null-homotopic or peripheral in $X$) and geodesic (each boundary component is the geodesic representative of its free homotopy class). Note that, as for curves, essential geodesic subsurfaces are well-defined representatives of their respective isotopy class. Equivalently, one can think of an essential geodesic surface as a codimension-$0$ submanifold with geodesic boundary. If $X$ has cusps, then $Y$ is also allowed to have cusp ends. A proper subsurface of $X$ is a subsurface $Y$ not equal to $X$.

The (marked) entropy spectrum $\E(X)$ is the multiset 
\[
\E(X) := \{h_Y \mid Y \subset X \text{ is a connected subsurface}\}
\]
There is of course an unmarked version as well, where we forget the subsurface associated to the entropies but it is still a multiset so we still know if a certain entropy corresponds to multiple subsurfaces. Note that as a set, it is a subset of $]0,1]$ which contains $1$ if and only if $X$ is geodesically complete and finite area. In this case, the multiplicity of $1$ is exactly $1$, as all proper surfaces are Fuchsian groups of the second kind and as such, as mentioned above, have entropies strictly less than $1$. 

More generally we have the following strict inequality, essentially due to Patterson \cite{Patterson1976}, and which has since been reproven \cite{Furusawa1991} and generalized \cite{DalboOtalPeigne2000, CanaryZhangZimmer2026}.

\begin{lemma}\label{lem:strict}
Let $Y$ be a proper subsurface of a connected surface $X$. Then $h_Y < h_X$. 
\end{lemma}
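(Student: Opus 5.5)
The plan is to prove the strict inequality $h_Y<h_X$ through the identification of entropy with the critical exponent of the associated Fuchsian group, and then to argue via Patterson--Sullivan theory.

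\textbf{Setup.} Realize $X=\Hyp/\Gamma$ as the convex core, so $\Gamma$ is finitely generated. Since $Y$ is an essential geodesic subsurface, $\pi_1(Y)$ injects into $\pi_1(X)$, and we may write $Y$ as the convex core of $\Hyp/\Gamma'$ with $\Gamma'<\Gamma$. The entropy $h_Y$ is the critical exponent $\delta(\Gamma')$ of the Poincaré series $\sum_{\gamma\in\Gamma'} e^{-s\, d(o,\gamma o)}$, and similarly $h_X=\delta(\Gamma)$. Since $\Gamma'\subset\Gamma$, monotonicity gives $h_Y\le h_X$ at once. The whole content of the lemma is strictness.

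\textbf{Case $X$ of finite area.} Then $h_X=1$. Because $Y$ is proper and essential, it has at least one geodesic boundary component that is interior to $X$. So $\Gamma'$ is a Fuchsian group of the second kind, and $h_Y<1$ by the classical fact recalled above (e.g.\ \cite{Beardon1971}).

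\textbf{Case $\partial X\neq\emptyset$.} Here $\Gamma$ is convex cocompact (geometrically finite if $X$ has cusps), hence of divergence type. There is a Patterson--Sullivan measure $\mu$ of dimension $\delta=\delta(\Gamma)$, supported on $\Lambda(\Gamma)$, with no atoms and full support on $\Lambda(\Gamma)$.

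Since $Y\subsetneq X$, there is a closed curve $c$ in $X$ that cannot be homotoped into $Y$. I claim $g=[c]$ generates, together with $\Gamma'$, a free product $\Gamma'*\langle g^k\rangle$ for large $k$. This is a ping-pong argument: the fixed points of $g$ lie outside $\Lambda(\Gamma')$, because a lift of $c$ crosses a boundary geodesic of the lift of $Y$.

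The standard free-product estimate (as in Dal'bo--Otal--Peigné) then gives a strict gap. Concretely, the Poincaré series of the free product $H=\Gamma'*\langle g^k\rangle$ contains the sums
\[
\sum_{n}\Bigl(P_{\Gamma'}(s)\,P_{\langle g^k\rangle}(s)\Bigr)^n
\]
up to multiplicative constants coming from quasi-additivity of distances along alternating words. As $s\downarrow\delta(\Gamma')$, $P_{\Gamma'}(s)\to\infty$, provided $\Gamma'$ is of divergence type; it is, being geometrically finite. Hence $P_H$ diverges at some $s>\delta(\Gamma')$, so $\delta(H)>\delta(\Gamma')$. Since $H<\Gamma$, this yields $h_X\ge\delta(H)>h_Y$.

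\textbf{Main obstacle.} The hard step is the quasi-additivity estimate $d(o,w o)\ge\sum d(o,w_i o)-Cn$ for alternating words $w=w_1\cdots w_n$. This requires choosing the basepoint and $k$ so that the ping-pong domains are separated by a definite distance. I would handle it by taking $o$ on a boundary geodesic of the lift of $Y$ and using the hyperbolic geometry of half-planes: each factor maps the complement of its ping-pong set into that set, with a uniform angle bound. This lets the triangle-inequality defects be controlled by a constant per letter.
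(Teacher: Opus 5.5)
The paper gives no proof of this lemma; it attributes it to Patterson and cites later proofs. Your route is essentially the Dal'bo--Otal--Peign\'e argument among those references: use divergence type of $\Gamma'$, then build a free product $\Gamma'*\langle g^k\rangle<\Gamma$. The outline is correct, but the step you call the main obstacle is misidentified, and as stated it points the wrong way.

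To make $P_H$ diverge you need a \emph{lower} bound on $P_H$, hence an \emph{upper} bound on $d(o,wo)$. That is just the triangle inequality. For a reduced alternating word $w=w_1\cdots w_n$, invariance of the metric gives $d(o,wo)\leq\sum_i d(o,w_io)$, so
\[
P_H(s)\;\geq\;\sum_{n\geq 1}\Bigl(P^*_{\Gamma'}(s)\,P^*_{\langle g^k\rangle}(s)\Bigr)^n .
\]
Here $P^*$ denotes the sum over nontrivial elements, so that the words are reduced. There are no multiplicative constants, and ping-pong is used only to know that distinct reduced words give distinct elements. As $s\downarrow\delta(\Gamma')$, the first factor tends to $+\infty$ by divergence type. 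The second factor stays above $2e^{-s\,d(o,g^ko)}$. So the product exceeds $1$ for some $s>\delta(\Gamma')$, and you are done.

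The estimate $d(o,wo)\geq\sum_i d(o,w_io)-Cn$ that you planned to prove would only bound $P_H$ from \emph{above}; that is the tool for showing a free product has small critical exponent. It plays no role here, so the basepoint and angle analysis can be dropped. So can the Patterson--Sullivan measure, which you never use. The separate finite-area case is also optional, since the same argument covers it.

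One justification should be tightened. Crossing a boundary lift of $\tilde Y$ is not by itself why $g^\pm\notin\Lambda(\Gamma')$. The actual reason is this: a geodesic ray ending at a point of $\Lambda(\Gamma')$ either is asymptotic to a boundary lift of $\tilde Y$ or eventually stays in $\tilde Y$. For the axis of $g$, either alternative forces $c$ into $Y$, using discreteness in the first case.

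For the ping-pong you then need a neighbourhood $U$ of $\{g^+,g^-\}$ with $\gamma U\cap U=\emptyset$ for all $\gamma\in\Gamma'\setminus\{1\}$. This follows from two facts: $\Gamma'$ acts properly discontinuously and freely on $\partial\Hyp\setminus\Lambda(\Gamma')$, and no element of a torsion-free Fuchsian group exchanges the two fixed points of a hyperbolic element. Finally, take $k$ large enough that $g^{km}(\partial\Hyp\setminus U)\subset U$ for all $m\neq 0$.
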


Note that the connected condition is essential. If $X$ is a disconnected surface with $X_1,\hdots,X_m$ its connected components, then because the number of geodesics of length $L$ is at most $m$ times the maximal growth of its components, $h_X = \max_{i=1,\hdots,m} h_{X_i}$. This, in particular, is why we define the entropy spectrum to be the entropies of connected subsurfaces. If not for this condition, for large enough topology, this would induce infinity multiplicities. The fact that, with our definition, all multiplicities are finite is not obvious, but follows from Theorem \ref{thm:welldone} (see Theorem \ref{thm:order} in the last section). 

We now stated the promised marked rigidity statement. 

\begin{theorem}\label{thm:marked}
The marked entropy spectrum determines a closed hyperbolic surface up to isometry.
\end{theorem}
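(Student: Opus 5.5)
The plan is to reduce the statement to the classical fact that a closed hyperbolic surface is determined up to isometry by the lengths of finitely many simple closed geodesics. For instance, Fenchel--Nielsen coordinates with respect to a pants decomposition, supplemented by the lengths of finitely many additional curves that fix the twists, determine a point of $\T_g$; equivalently, the marked simple length spectrum is injective on $\T_g$. It therefore suffices to show that the marked entropy spectrum determines $\ell_X(\gamma)$ for every simple closed curve $\gamma$ in a suitable finite family. This determination will be done locally, inside small subsurfaces whose geometry is controlled by very few parameters.

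The key subsurfaces are pairs of pants. Take a pants decomposition $P=\{\gamma_1,\dots,\gamma_{3g-3}\}$ of $X$. Each pair of pants $Y$ in the decomposition is an essential connected geodesic subsurface with boundary lengths $(\ell_1,\ell_2,\ell_3)$. Its entropy $h_Y$ is the Hausdorff dimension of the limit set of a Schottky group determined by these three lengths. I would establish the following.
\begin{enumerate}
\item[(i)] The map $(\ell_1,\ell_2,\ell_3)\mapsto h_Y$ is strictly decreasing in each coordinate. This should follow by applying Lemma \ref{lem:lengthexpansion}, which lengthens one boundary geodesic while strictly increasing the lengths of all other closed geodesics, so that $N_Y(L)$ decreases. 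Strictness of the entropy drop needs care; a uniform multiplicative length increase would give it, and the paper's quantified strip-map arguments provide such a factor. Alternatively one can use real-analyticity of the Hausdorff dimension together with monotonicity.
\item[(ii)] A single number cannot recover three lengths, so I would also use the one-holed tori and four-holed spheres in $X$. These are the subsurfaces $Y$ made of two pants glued along some $\gamma_i$, together with the neighboring pants.
\end{enumerate}

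A cleaner route is to use all connected subsurfaces simultaneously. The marked spectrum gives the entropy function on every isotopy class of subsurfaces. Consider a $\gamma_i$ that is non-separating, and the subsurface $X\setminus\gamma_i$. Cutting along twisted variants shows that the entropy is a function on $\T_g$ that varies with every Fenchel--Nielsen coordinate.

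The main obstacle is injectivity: showing that the finitely many entropy functions $\T_g\to\,]0,1[$, indexed by subsurface classes, separate points. I would first try a rigidity argument via marked length spectrum rigidity. Knowing $h_Y$ for all pants $Y$ in all pants decompositions gives a lot of information. For a pair of pants, I would show that the entropies of the pants themselves together with those of the adjacent one-holed tori and four-holed spheres determine the boundary lengths. I would do this by degenerating: as one boundary length tends to $0$ the entropy tends to a limit depending only on the other two, and as a length tends to infinity the entropy tends to $0$.

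If direct injectivity proves hard, the fallback is the following. For a one-holed torus $T\subset X$, the family of pants $T\setminus\alpha$ over all simple curves $\alpha\subset T$ has entropies that are decreasing functions of $\ell(\alpha)$ given $\ell(\partial T)$. Hence the marked entropy spectrum determines the ordering of the simple length spectrum on $T$, and with the pants entropies the actual values. Marked simple length rigidity then concludes.
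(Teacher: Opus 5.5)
\textbf{There is a genuine gap, exactly where you place ``the main obstacle'':} nothing in your proposal shows that the entropies separate points of $\T_g$.

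The degeneration idea only describes the asymptotics of the universal pants function $h(\ell_1,\ell_2,\ell_3)$. It cannot recover the boundary lengths of a fixed surface. Bringing in one-holed tori and four-holed spheres adds unknowns (their interior and twist parameters) along with the new equations.

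The fallback breaks at the sentence ``with the pants entropies the actual values''. Let $T$ be a one-holed torus with boundary $\beta$. Each simple $\alpha\subset T$ gives the known number $h(\ell(\alpha),\ell(\alpha),\ell(\beta))$, but $\ell(\beta)$ is itself unknown. So each entropy only determines $\ell(\alpha)$ as a strictly decreasing function of $\ell(\beta)$. You get a one-parameter family of candidate length functions on $T$, in which all interior simple lengths go up exactly when $\ell(\beta)$ goes down. Picking out the correct member needs a rigidity statement you never supply, so marked simple length rigidity cannot yet be invoked.

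\textbf{The missing idea} is to stop reconstructing lengths. Instead, compare two marked surfaces $X,Y\in\T_g$ with the same marked entropy spectrum, using precisely this see-saw. This is what the paper does.
\begin{enumerate}
\item[(1)] If $X\neq Y$, Lemma \ref{lem:dominating} gives, after possibly swapping $X$ and $Y$, a nonseparating $\alpha$ with $\ell_X(\alpha)>\ell_Y(\alpha)$. That lemma is Thurston's theorem that no hyperbolic metric dominates another on all simple closed curves, upgraded to nonseparating curves by a Dehn twist limit argument.
\item[(2)] Let $T$ be a one-holed torus containing $\alpha$, with boundary $\beta$. Equality of the entropies of the pants $T\setminus\alpha$ forces $\ell_X(\beta)<\ell_Y(\beta)$.
\item[(3)] Equality for the pants $T\setminus\alpha'$ then forces $\ell_X(\alpha')>\ell_Y(\alpha')$ for every $\alpha'\subset T$ meeting $\alpha$ once.
\item[(4)] By Lemma \ref{lem:sequence}, the strict inequality propagates to every nonseparating curve. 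Lemma \ref{lem:dominating} then forces $X=Y$, a contradiction.
\end{enumerate}
Thurston's non-domination theorem is the deep input that replaces the injectivity you could not establish. Your within-surface ordering observation is the shadow of this see-saw, but by itself it does not close the argument.

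\textbf{A separate issue in (i).} Lemma \ref{lem:lengthexpansion} as stated only controls simple closed curves. On a pair of pants those are just the boundary curves, so it says nothing about $N_Y(L)$. You need the strip-deformation version along the orthogeodesic from $\beta_1$ to itself, which every non-peripheral geodesic crosses, combined with your analyticity argument to get strictness.
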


Our proof uses two lemmas, which are essentially already well-known. 

The first is a result about non-dominating length spectra. Throughout $\T_g$ is the Teichm\"uller space of finite area complete surfaces of fixed topological type. It is slightly stronger than the version usually stated in the literature so we provide a quick proof of how the strengthening works. 

\begin{lemma}\label{lem:dominating}
If $X,Y \in \T_g$ satisfy 
$
 \ell_X(\alpha)\geq \ell_Y(\alpha)
$
for every nonseparating simple closed curve $\alpha$, then $X=Y$.\end{lemma}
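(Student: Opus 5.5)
The plan is to reduce Lemma \ref{lem:dominating} to the classical non-domination result of Thurston \cite{ThurstonStretch}. That result says that if $\ell_X(\alpha)\geq \ell_Y(\alpha)$ for \emph{every} simple closed curve $\alpha$, then $X=Y$. Equivalently, the optimal Lipschitz constant from $Y$ to $X$ is at least $1$, with equality only if $X=Y$. This is the statement that Thurston's asymmetric metric is positive definite.

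Let $K$ denote the supremum of $\ell_X(\alpha)/\ell_Y(\alpha)$ over simple closed curves $\alpha$. By Thurston, $\log K$ equals the Lipschitz distance $d_{\rm Th}(Y,X)$. By continuity, the same supremum can be taken over measured laminations, and the supremum is realized on the maximally stretched chain-recurrent lamination. Thus the strengthening amounts to showing that the hypothesis for nonseparating curves already implies the hypothesis for all simple closed curves.

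The key step is a density argument. Projective classes of weighted nonseparating simple closed curves are dense in $\mathcal{PML}$. One way to see this: take any simple closed curve $\beta$ and a nonseparating curve $\delta$ intersecting it. The Dehn twist iterates $T_\beta^k(\delta)$ are nonseparating, since twists preserve nonseparating-ness, and $T_\beta^k(\delta)/k \to i(\beta,\delta)\,\beta$ projectively. Length functions extend continuously and homogeneously to measured laminations, so
\[
\ell_X(T_\beta^k\delta)/k \to i(\beta,\delta)\,\ell_X(\beta),
\]
and likewise on $Y$. Passing to the limit in $\ell_X(T_\beta^k\delta)\geq \ell_Y(T_\beta^k\delta)$ gives $\ell_X(\beta)\geq\ell_Y(\beta)$ for every simple closed curve $\beta$, including separating ones. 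Thurston's theorem then yields $X=Y$.

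The main obstacle is verifying the twist limit precisely. The cleanest route is to avoid measured-lamination limits and use the explicit trace or length estimate
\[
\ell(T_\beta^k\delta) = k\, i(\beta,\delta)\,\ell(\beta) + O(1),
\]
where the constant is controlled uniformly in $k$ by the lengths of $\delta$ and $\beta$ on the fixed surface. This follows from elementary hyperbolic geometry by concatenating geodesic segments, or from the bound $|\ell(T_\beta^k\delta)-k\,i(\beta,\delta)\ell(\beta)|\leq i(\beta,\delta)\,\ell(\delta)$-type inequalities. Dividing by $k$ and letting $k\to\infty$ is then routine. One must also ensure that some nonseparating $\delta$ with $i(\beta,\delta)>0$ exists, which is true on any surface of genus at least $1$.

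For finite-area complete surfaces with cusps, I would note that Thurston's theorem also holds in that setting. The same argument then applies, provided the genus is positive so that nonseparating curves exist, which is implicit in the statement.
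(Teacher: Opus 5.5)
Your proposal is correct and follows essentially the same route as the paper. The paper also upgrades the hypothesis to a separating curve $\gamma$ by twisting a nonseparating curve $\beta$ with $i(\beta,\gamma)=2$ about $\gamma$, uses the two-sided estimate $2n\ell(\gamma)-\ell(\beta)<\ell(T_\gamma^n\beta)\leq 2n\ell(\gamma)+\ell(\beta)$ (citing McShane--Parlier for the lower bound, which is the nontrivial half you flagged), divides by $2n$, lets $n\to\infty$, and then invokes Thurston's theorem, so your remarks about Lipschitz constants and density in $\mathcal{PML}$ are not needed.
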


\begin{proof}
We first observe that the hypothesis implies the same weak inequality for
separating simple closed curves. Let $\gamma$ be separating. Choose a
nonseparating simple closed curve $\beta$ such that
\[
 i(\beta,\gamma)=2,
\]
and let $T_\gamma$ denote a Dehn twist about $\gamma$. For $n\geq 1$, set
\[
 \beta_n=T_\gamma^n(\beta).
\]
Since a Dehn twist is a homeomorphism, each $\beta_n$ is again
nonseparating. Hence, by assumption,
\[
 \ell_X(\beta_n)\geq \ell_Y(\beta_n).
\]

For either hyperbolic metric $Z\in\{X,Y\}$, the standard Dehn twist
estimate gives
\[
 2n\ell_Z(\gamma)-\ell_Z(\beta)
 <
 \ell_Z(\beta_n)
 \leq
 2n\ell_Z(\gamma)+\ell_Z(\beta);
\]
see, for instance, \cite[Lemma~3.2]{McShaneParlier2008}.

It follows that
\[
 \lim_{n\to\infty}\frac{\ell_Z(\beta_n)}{2n}
 =
 \ell_Z(\gamma).
\]
Dividing the inequality
$\ell_X(\beta_n)\geq\ell_Y(\beta_n)$ by $2n$ and letting $n\to\infty$
therefore gives
\[
 \ell_X(\gamma)\geq\ell_Y(\gamma).
\]
Thus
\[
 \ell_X(\alpha)\geq\ell_Y(\alpha)
\]
for every simple closed curve $\alpha$.

It is now a theorem of Thurston's \cite{ThurstonStretch} that if the above inequality holds then $X=Y$ (see \cite{DoanParlierTan2023} for another proof which uses the Luo-Tan identity \cite{LuoTan2014}). 
\end{proof}

The second result is topological. The following is not difficult via typical curve graph type arguments, and was explicitly shown by Schmutz Schaller \cite{Schmutz-Schaller2000}. Note that on a one-holed torus, this is the connectedness of the curve graph (which is the Farey graph in that case). 

\begin{lemma}\label{lem:sequence}
Let $\alpha$ and $\alpha'$ be two any two non-separating simple closed curves on a finite type surface $\Sigma$ with positive genus. Then there exists a sequence of non-separating closed curves $\alpha_0=\alpha, \alpha_1,\hdots,\alpha_n=\alpha'$ such that 
\[
i(\alpha_k,\alpha_{k-1})=1
\]
for all $k=1,\hdots,n$. 
\end{lemma}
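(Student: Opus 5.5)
The plan is to argue by induction on the complexity of $\Sigma$, using the connectivity of a suitable curve graph.

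\textbf{Reduction to a connected graph.} Let $\G$ be the graph whose vertices are (isotopy classes of) non-separating simple closed curves on $\Sigma$, with an edge between $\alpha$ and $\beta$ whenever $i(\alpha,\beta)=1$. The statement is exactly that $\G$ is connected.

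\textbf{Base case.} When $\Sigma$ is a one-holed or closed torus, $\G$ is the Farey graph, which is connected. This can be seen by Euclid's algorithm on slopes $p/q$: any slope can be joined to $1/0$ by successively reducing $|q|$ along Farey neighbours.

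\textbf{Inductive step.} For general $\Sigma$ I will use the connectivity of the ordinary curve graph, or rather of the non-separating curve graph, whose edges are disjointness; this is standard for genus at least $2$ and follows from Lickorish-type surgery arguments. It then suffices to handle two \emph{disjoint} non-separating curves $\alpha,\beta$.

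\emph{Case 1: $\alpha\cup\beta$ is non-separating.} The complement is connected, so there is a curve $\gamma$ meeting each of $\alpha$ and $\beta$ exactly once. Then $\alpha,\gamma,\beta$ is a path in $\G$.

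\emph{Case 2: $\alpha\cup\beta$ separates.} Since each curve is individually non-separating, $\Sigma\setminus(\alpha\cup\beta)$ has two components, each containing one copy of $\alpha$ and one copy of $\beta$ on its boundary. Choose an arc in each component joining the $\alpha$ side to the $\beta$ side, and concatenate them into a closed curve $\gamma$ with $i(\gamma,\alpha)=i(\gamma,\beta)=1$. This $\gamma$ is automatically non-separating. Again $\alpha,\gamma,\beta$ is a path.

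In both cases $\alpha$ and $\beta$ are joined in $\G$. The genus $1$ case with several boundary components is handled directly by the same arc constructions, since non-separating curves there all lie in a one-holed torus neighbourhood structure. Alternatively one can cite Schmutz Schaller.

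\textbf{Main obstacle.} The main obstacle is the connectivity of the disjointness graph of non-separating curves in low complexity, namely genus $1$ with several punctures, where disjoint non-separating curves are parallel up to punctures. I would first try to bypass this by proving connectivity of $\G$ directly, via induction on $i(\alpha,\alpha')$ with surgery. Concretely, surgering $\alpha'$ along an arc of $\alpha$ produces a non-separating curve with smaller intersection with $\alpha$ and intersection at most $1$-related steps to $\alpha'$, which is the classical Lickorish argument.
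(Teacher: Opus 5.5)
Your treatment of genus at least two is correct and is the standard argument. Connectivity of the disjointness graph of non-separating curves reduces everything to disjoint pairs, and your Cases 1 and 2 join such a pair by a curve crossing each once. That already covers the closed surfaces to which the paper applies the lemma in Theorem~\ref{thm:marked}. The paper itself gives no proof; it only cites Schmutz Schaller and points to the Farey graph for the one-holed torus.

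The gap is genus one with $n\geq 2$ boundary components or cusps, which the statement includes. There the disjointness graph is disconnected. After filling in the punctures, disjoint non-separating curves become isotopic in the closed torus, so the slope is constant on each component of that graph, and the reduction to disjoint pairs never gets started. The sentence you offer for this case does not work either. Two distinct non-separating curves of the same slope never lie in a common one-holed torus, because distinct curves there have distinct slopes. So there is no Farey graph to invoke.

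The repair is the surgery you mention at the end, but it should be aimed at the graph $\G'$ on non-separating curves with edges $i\leq 1$, not at the disjointness graph, and it has to be made precise.

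\begin{itemize}
\item Put $\alpha,\alpha'$ in minimal position with $k=i(\alpha,\alpha')\geq 2$. Let $\delta\subset\alpha$ join two points $p,q$ of $\alpha\cap\alpha'$ that are consecutive along $\alpha$. Let $\alpha'_1,\alpha'_2$ be the two arcs of $\alpha'$ between $p$ and $q$, with $k_1+k_2=k-2$ further points of $\alpha$ in their interiors.
\item Form $c_j=\delta\cup\alpha'_j$. Push its $\alpha'_j$-part off $\alpha'$ towards the side containing $\delta$ near $p$, and its $\delta$-part off $\alpha$ towards the side containing $\alpha'_j$ near $p$. The corner at $p$ is then clean, and at $q$ at most one crossing with each of $\alpha,\alpha'$ is forced. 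Hence $c_j$ meets $\alpha'$ at most once and $\alpha$ at most $k_j+1\leq k-1$ times.
\item Orient so that $[c_1]+[c_2]=[\alpha']$ in homology, and pair with a curve $d$ crossing $\alpha'$ once. Then one of the $c_j$ has non-zero algebraic intersection with $d$, so it is non-separating, and it is distinct from $\alpha'$.
\end{itemize}

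Induction on $k$ then gives connectivity of $\G'$. Your Cases 1 and 2 hold in every genus (in genus one only Case 2 occurs), so they turn each edge of $\G'$ with $i=0$ into a path of length two in $\G$. This proves the lemma uniformly for every finite type surface of positive genus, and you no longer need to cite connectivity of the non-separating curve complex.
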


We can now prove the rigidity of the entropy spectrum. 

\begin{proof}[Proof of Theorem \ref{thm:marked}]
Take $X$ and $Y$, points of the same Teichm\"uller space, with the same marked entropy spectrum. By Lemma \ref{lem:dominating} above, if $X \neq Y$, then their exists a non-separating simple closed curve $\alpha$ such that $\ell_X(\alpha) > \ell_Y(\alpha)$. Now consider a separating curve $\beta$ which bounds a pair of pants with $\alpha$ (and hence cuts off a one-holed torus). Let $P_X$ be the geodesic pair of pants on $X$ bounded by $\alpha$ and $\beta$ and $P_Y$ the one on $Y$. By hypothesis, $h_{P_X}= h_{P_Y}$. The entropies only depend on the boundary lengths, and since $\ell_X(\alpha) > \ell_Y(\alpha)$, by monotonicity of the entropies, we have $\ell_X(\beta) < \ell_Y(\beta)$. 

Now take any other non-separating curve $\alpha'$. By Lemma \ref{lem:sequence} above, there exists a sequence of non-separating curves $\alpha_0=\alpha,\hdots,\alpha_n=\alpha'$ such that $i(\alpha_{k},\alpha_{k-1})=1$ for all $k=1,\hdots,n$. Each pair $\alpha_k,\alpha_{k-1}$ lives in a one-holed torus with boundary geodesic $\beta_k$. By the above argument, and arguing by induction, we have
\[
\ell_X(\beta_k) < \ell_Y(\beta_k)
\]
and 
\[
\ell_X(\alpha_k) > \ell_Y(\alpha_k).
\]
In particular, $\ell_X(\alpha') > \ell_Y(\alpha')$ for any nonseparating simple closed curve. By Lemma \ref{lem:dominating}, this is impossible and hence $X=Y$. 
\end{proof}

\begin{remark}
The same proof shows marked entropy rigidity for geodesically complete finite area hyperbolic surfaces (in other words, surfaces with cusps). (Specifically, if the surfaces have positive genus, the proof works verbatim, and for punctured spheres one has to adapt the curve graph argument to a sequence of curves that sequentially intersect twice.) However, the theorem can not hold for {\it any} compact surface with boundary, for a somewhat obvious reason. It is easy to construct two non-isometric pairs of pants, both with geodesic boundary, which have the same entropy, just by using the intermediate value theorem.
\end{remark}

\section{Bounding the number of curves on surfaces with boundary}\label{sec:bound}

The goal of this section is to prove Theorem \ref{thm:mainestimate} and consequences. 

\subsection{Boundary length, width and hexagon decompositions}

For a compact surface $X$ with geodesic boundary, set $B:=\ell_X(\partial X)$ and the width to be $W=w(X)$. 

Note that in the statement of Theorem \ref{thm:mainestimate}, the constants do not depend on the topology of $X$, unlike in Lemma \ref{lem:buser}. This is less surprising when one realizes that in fact, the quantities $B$ and $W$ give bounds on the topology, hence on the area, of $X$.

\begin{proposition} The Euler characteristic $\chi=\chi(X)$ of $X$ satisfies \[ |\chi(X)| < \frac{\sqrt3}{12}\,B\sinh W. \] \end{proposition}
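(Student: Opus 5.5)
The plan is to compare the area $\area(X)=2\pi|\chi(X)|$, given by Gauss--Bonnet, with the area swept out by geodesics leaving the boundary orthogonally. The crude version is immediate. Every point $x\in X$ satisfies $d(x,\partial X)\le W$. Hence the normal exponential map from $\partial X\times[0,W]$ onto $X$ is surjective, and its image has area at most $B\sinh W$, using the half-collar area formula from Section~\ref{ss:curves}. This only gives $|\chi|\le B\sinh W/(2\pi)$. Since $\sqrt3/12<1/(2\pi)$, the proof has to gain a definite factor over the collar count. I would get it from the geometry of the cut locus, which is where the hexagon decomposition comes in.

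\textbf{Step 1: the spine and the pieces.} Let $\Gamma$ be the cut locus of $\partial X$, i.e.\ the set of points with at least two shortest paths to $\partial X$. For generic $X$ it is a trivalent graph, a spine of $X$. I would perturb, or take limits at the end, to assume genericity. Each open edge $e$ of $\Gamma$ is dual to an orthogeodesic $\mu_e$ joining the two boundary pieces that are nearest to the points of $e$. Cutting along these orthogeodesics gives a decomposition of $X$ into $2|\chi|$ right-angled hexagons, one for each vertex $v$ of $\Gamma$. Alternatively, and this is what I would actually use, $X$ decomposes into the regions $R_v$ of points whose nearest-point projection lands in the three boundary arcs $a_v,b_v,c_v$ attached to the vertex $v$. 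These boundary arcs partition $\partial X$, so $\sum_v (a_v+b_v+c_v)=B$.

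\textbf{Step 2: local area estimate.} For each vertex $v$ I would prove
\[
\area(R_v)\;\le\;\frac{\sqrt3}{24}\,\cdot 2\pi\,(a_v+b_v+c_v)\,\sinh W \quad(\text{strictly, in fact}),
\]
or any inequality of the shape $\pi \le \tfrac{\sqrt3}{24}(a_v+b_v+c_v)\sinh W$, with strictness. Summing over the $2|\chi|$ vertices then gives $2\pi|\chi|\cdot(\text{const})<\tfrac{\sqrt3}{12}\cdot 2\pi\, B\sinh W$ after normalizing, which is the claim.

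To prove the local estimate I would split each piece into three quadrilaterals. The distance from $v$ to $\partial X$ is some $r_v\le W$, and $v$ is equidistant from three boundary arcs. The three shortest paths from $v$ meet at angles summing to $2\pi$. At least the symmetric configuration has angles $2\pi/3$, which is where $\sin(\pi/3)=\sqrt3/2$ should enter. Each quadrilateral has a base on $\partial X$, two right angles at the base, and height at most $r_v\le W$. Standard trigonometry for such quadrilaterals (Buser, p.~454) relates its base, its height and its top angles. This should give a lower bound on the base in terms of the angle at $v$ and the height, which beats the naive rectangle bound.

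\textbf{Main obstacle.} The hard part is the sharp constant in Step 2. That is, one has to optimize over the angles at $v$ (summing to $2\pi$) and over the shapes of the edges of $\Gamma$, and then check that the worst case is the symmetric $2\pi/3$ configuration, yielding exactly $\sqrt3/12$. I would first try to show that each hexagon has boundary length at least a function of $W$ that is convex in the angles, so that Jensen's inequality forces equal angles $2\pi/3$. If that fails, I would fall back on bounding each quadrilateral separately by its area and angle at $v$ via Gauss--Bonnet. Strictness should come for free, since equality would force a degenerate (infinite-height or non-generic) configuration.
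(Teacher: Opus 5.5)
\textbf{There is a genuine gap.} Your crude count $2\pi|\chi|\le B\sinh W$ is correct, and you rightly see that a definite factor must be gained. But Step~2, which you flag as the main obstacle, is the entire content of the proposition, and the proposal does not supply it. The paper does not prove it from scratch either. It doubles $X$ along $\partial X$ and quotes Bavard's sharp inequality $\sinh W\,\tanh\bigl(B/(12|\chi|)\bigr)\ge 1/\sqrt3$ for the covering radius of the doubled boundary multicurve. It then finishes with $\tanh x<x$.

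That inequality is an equality for the pair of pants glued from two regular right-angled hexagons with boundary sides $b$. Letting $b\to 0$ gives $|\chi|/(B\sinh W)\to\sqrt3/12$, so the constant has no slack at all. In particular, the local inequality you write down is false. Summing $\area(R_v)\le\frac{\sqrt3}{24}\,2\pi\,(a_v+b_v+c_v)\sinh W$ yields $|\chi|\le\frac{\sqrt3}{24}B\sinh W$, whereas for these pants $\frac{\sqrt3}{24}B\sinh W\to\frac12<1=|\chi|$. Your alternative form is worse by a further factor $2\pi$. What the summation actually requires is $(a_v+b_v+c_v)\sinh W>2\sqrt3$ for every hexagon, with nothing to spare.

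The pieces you describe also do not exist as stated. The three shortest paths from a cut-locus vertex end on three different boundary geodesics, so the region between two of them has no base on a single boundary arc.

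\textbf{What would close the gap.} An argument along your lines works if it is organized by edges of the cut locus rather than by vertices, and if it rests on an exact identity rather than an estimate. Use the Voronoi quadrilaterals of Section~\ref{ss:voronoi}: one for each side of each cut-locus edge. Each has a base of length $t_0$ on $\partial X$ with right angles there, legs $r_1,r_2\le W$, and angles $\alpha,\beta$ at the two endpoints of the edge, which are acute by Lemma~\ref{lem:voronoi-angle}. A direct computation gives
\[
\cot\alpha=\frac{\sinh r_1\cosh r_2\cosh t_0-\cosh r_1\sinh r_2}{\cosh r_2\sinh t_0}.
\]
From this, using that the angles are acute, one gets $\cot\alpha+\cot\beta\le 2\sinh W\tanh(t_0/2)$, with equality for Saccheri quadrilaterals with legs $W$. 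In the trivalent case there are $6|\chi|$ such quadrilaterals and their bases tile $\partial X$. Their $12|\chi|$ top angles lie in $]0,\pi/2[$ and sum to $4\pi|\chi|$. Convexity of $\cot$ and concavity of $\tanh$ then give
\[
\frac{12|\chi|}{\sqrt3}\le\sum(\cot\alpha+\cot\beta)\le 12|\chi|\,\sinh W\,\tanh\Bigl(\frac{B}{12|\chi|}\Bigr).
\]
This is Bavard's inequality, and $\tanh x<x$ finishes the proof. Vertices of higher valence only improve the count, so no genericity assumption is needed. Note that the angle that enters is $\pi/3$, through $\cot$, rather than $2\pi/3$ through $\sin$.
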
 \begin{proof} Let $X^D$ denote the double of $X$ across its boundary, and let \[ \Gamma\subset X^D \] be the multicurve obtained from $\partial X$ under the doubling. Then \[ \ell(\Gamma)=B, \qquad \chi(X^D)=2\chi(X). \] Moreover, the covering radius of $\Gamma$ in $X^D$ is exactly $W$.

We now apply an inequality due to Bavard which provides a lower bound on the covering radius of embedded collars \cite{Bavard}. Applying it to $\Gamma \subset X^D$ gives
\[ \sinh W\, \tanh\!\left( \frac{\ell(\Gamma)}{-6\chi(X^D)} \right) > \frac1{\sqrt3}, \]
which in turn implies
\[ \sinh W\, \tanh\!\left( \frac{B}{12|\chi(X)|} \right) > \frac1{\sqrt3}. \] Since $\tanh(x)\leq x$ for all $x>0$, \[ \sinh W\cdot\frac{B}{12|\chi(X)|} > \frac1{\sqrt3}. \] Rearranging yields \[ |\chi(X)| < \frac{\sqrt3}{12}\,B\sinh W. \] \end{proof}

We now consider a hexagon decomposition of $X$, studied in \cite{GultepeParlier, ParlierIsospectral}, consisting in a collection $a_1,\hdots,a_\kappa$ disjoint simple orthogeodesics, where $\kappa$ is the arc complexity of $X$.

The arc complexity is determined by the topology. Indeed, if the decomposition contains $\kappa$ interior arcs and $N$ right-angled hexagons, then each arc is a side of exactly two hexagons, while each hexagon has three arc-type sides. Hence
\[
3N=2\kappa.
\]
Since every right-angled hexagon has area $\pi$, Gauss--Bonnet gives
\[
N\pi=\area(X)=2\pi|\chi(X)|.
\]
Thus
\[
N=2|\chi(X)|
\qquad\text{and}\qquad
\kappa=3|\chi(X)|.
\]
Now each orthogeodesic in a decomposition cuts the boundary $\partial X$ in two points, hence cuts the boundary into $b_1,\hdots,b_{2\kappa}$ subarcs. For simplicity, the lengths of these boundary arcs and the hexagon decomposition arcs will also be denoted by the same letters, and we arrange them by length. 

Thus
\[
 a_1\leq\cdots\leq a_\kappa,
 \qquad
 b_1\leq\cdots\leq b_{2\kappa},
\]
In particular,
\[
 a_\kappa=\max_i a_i,
 \qquad
 b_1=\min_i b_i,
 \qquad
 b_{2\kappa}=\max_i b_i.
\]

Among all possible hexagon decompositions, we choose one where $a_\kappa$ is minimal. We call this a {\it shortest hexagon decomposition}. Note that the set of lengths of orthogeodesics is discrete, so the fact that one exists is immediate. 

We make the following observation.

\begin{proposition}\label{prop:hexagon-bounds}
The extremal lengths of the shortest hexagon decomposition satisfy
\[
2\arcsinh\!\left(\frac{1}{\sinh(B/2)}\right)
\leq a_1\leq a_\kappa\leq 2W
\]
and
\[
2\arcsinh\!\left(\frac{1}{\sinh(2W)}\right)
<b_1\leq b_{2\kappa}<B.
\]
\end{proposition}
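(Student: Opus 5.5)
We prove the four inequalities separately, using only the collar lemma for simple orthogeodesics, basic hyperbolic trigonometry, and the minimality of the shortest hexagon decomposition.

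\textbf{The lower bound on $a_1$.} Let $a$ be any arc of the decomposition, and let $\beta$ be a boundary component containing one of its endpoints. The collar around $\beta$ has width $\arcsinh(1/\sinh(\ell(\beta)/2))$ and is embedded on the interior side. A simple orthogeodesic from $\beta$ must leave this half-collar before it can return to the boundary. If it ends on another boundary component $\beta'$, it must also cross the half-collar of $\beta'$, and these two half-collars are disjoint. If it ends back on $\beta$, it must cross the half-collar of $\beta$ twice. In either case its length is at least $2\arcsinh(1/\sinh(\ell/2))$, where $\ell$ is at most $B$. Since this function is decreasing in $\ell$, we get $a_1\ge 2\arcsinh(1/\sinh(B/2))$.

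\textbf{The upper bound on $a_\kappa$.} This is the main step. Suppose $a_\kappa>2W$, and consider the two right-angled hexagons adjacent to $a_\kappa$. Their union is a right-angled octagon $O$ with $a_\kappa$ as a diagonal. Let $m$ be the midpoint of $a_\kappa$. There is a shortest path from $m$ to $\partial X$, of length at most $W$, so it is shorter than the half-length $a_\kappa/2$.

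We claim this forces $O$ to contain the other diagonal $a'$, joining the two boundary sides of $O$ not touched by $a_\kappa$. Indeed, every point of $O$ lies within $W$ of $\partial X$. Within $O$, the distance to the boundary sides is realized inside $O$ or passes through arc sides. Using the standard trigonometry of a right-angled hexagon, one checks that the perpendicular from the boundary sides across $O$ has length less than $a_\kappa$. The cleanest route is to compare the two diagonals of the octagon: by convexity and the triangle inequality through $m$, the diagonal $a'$ has length less than $2W<a_\kappa$.

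Replacing $a_\kappa$ by $a'$ gives another hexagon decomposition with a strictly smaller maximum, or at least fewer arcs attaining that maximum. Iterating contradicts minimality. Making the estimate $\ell(a')\le 2W$ precise is where the care is needed. The intended argument is that $a'$ is at most the sum of the two perpendicular distances from $m$ to the boundary sides, each of which is at most $W$.

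\textbf{The upper bound on $b_{2\kappa}$.} Each $b_i$ is a proper subarc of some boundary component, since every boundary component meets at least one arc. Hence $b_i<B$.

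\textbf{The lower bound on $b_1$.} Each $b_i$ is a side of a right-angled hexagon whose two adjacent sides are arcs $a_j$ and $a_k$, and the opposite side is also an arc. Doubling the hexagon along $b_i$ gives a pentagon- or hexagon-type relation. Alternatively, apply the collar lemma for the orthogeodesics $a_j$ and $a_k$: their strips have width $\arcsinh(1/\sinh a_j)$ and are disjoint, and $b_i$ runs between them along the boundary. Therefore
\[
b_i\ge \arcsinh(1/\sinh a_j)+\arcsinh(1/\sinh a_k)\ge 2\arcsinh(1/\sinh(2W)),
\]
using $a_j,a_k\le 2W$ from the bound already proved. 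The inequality is strict, because the strips cannot meet exactly along the arc.
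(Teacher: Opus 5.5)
\textbf{Gap in the upper bound $a_\kappa\le 2W$.} Your bounds on $a_1$, $b_{2\kappa}$ and $b_1$ match the paper's arguments: half-collars at both endpoints, proper subarcs of $\partial X$, and disjoint strips around the two arcs adjacent to a boundary side. However, the upper bound $a_\kappa\le 2W$ has a genuine gap, and your $b_1$ bound depends on it.

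The width gives you only \emph{one} path, of length at most $W$, from the midpoint $m$ to the \emph{nearest} point of $\partial X$. That point may lie on one of the two sides of $O$ that $a_\kappa$ already joins. It may also lie outside $O$ altogether, since the path can exit through an arc side $x_1,y_1,x_2,y_2$. Nothing gives you paths of length at most $W$ from $m$ to \emph{both} sides that $a'$ must join, so the estimate $\ell(a')\le 2W$ is unsupported.

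In fact the reasoning you sketch (convexity, triangle inequality through $m$, width bound) never uses minimality. It would therefore prove that in \emph{every} hexagon decomposition, any arc longer than $2W$ has a flip of length at most $2W$. That is false. Apply a high power of a pseudo-Anosov map to a fixed decomposition. Both diagonals of the corresponding octagon become arbitrarily long, while $W$, which depends only on $X$, does not change. So no purely local flip argument of this kind can produce the contradiction.

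\textbf{The missing idea.} You need to exhibit directly one decomposition whose arcs all have length at most $2W$, and only then invoke minimality. The paper does this with the decomposition dual to the boundary Voronoi decomposition, i.e.\ dual to the cut locus $G_X$ of $d(\cdot,\partial X)$.
\begin{enumerate}[(1)]
\item A point on an edge of $G_X$ has two minimizing segments to $\partial X$, each of length at most $W$. Their concatenation is homotopic, with endpoints sliding on $\partial X$, to the orthogeodesic dual to that edge. That orthogeodesic therefore has length at most $2W$.
\item At a vertex of valence $k>3$, the dual right-angled $2k$-gon is cut into hexagons by diagonals. Each diagonal is again homotopic to the concatenation of two minimizing segments from that vertex, so it also has length at most $2W$.
\end{enumerate}
The shortest decomposition then satisfies $a_\kappa\le 2W$ by minimality, and your strip argument for $b_1$ goes through unchanged.
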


\begin{proof}
We follow the strategy of \cite{ParlierIsospectral}, where we construct a hexagon decomposition dual to the boundary of a Voronoi decomposition relative to $\partial X$. More precisely, we put points of $X$ into regions according to which boundary curve they are closest to. (This construction will later be used in Lemma \ref{lem:boundary-reduction} and the cut locus of the decomposition will play an important role in subsection \ref{ss:voronoi}.)

The hexagon decomposition is dual to Voronoi decomposition, hence the distance to the cut locus is bounded above by $W$. As such, the length of the interior arcs of the decomposition are of length at most $2W$. By the minimality of the chosen
hexagon decomposition,
\[
a_\kappa\leq 2W.
\]
Also, every boundary side is a proper subarc of $\partial X$, and hence
\[
b_{2\kappa}<B.
\]

To bound $b_1$ from below, consider a boundary side $b$ of one of the right-angled hexagons, and denote the two adjacent interior sides by $x,y$. It follows that 
\[
b > \arcsinh\!\left(\frac{1}{\sinh x}\right) + \arcsinh\!\left(\frac{1}{\sinh y}\right)
\]
and now, since both $x,y \leq a_\kappa$ and $a_\kappa \leq 2W$, we have
\[
b > 2\arcsinh\!\left(\frac{1}{a_\kappa}\right) \geq 2\arcsinh\!\left(\frac{1}{\sinh(2W)}\right)
\]
and hence the same lower bound holds for $b_1$.

Finally, every interior arc crosses the half-collar associated to the boundary component at each of its two endpoints. Since every boundary component has length at most $B$, the collar lemma gives
\[
a_1
\geq
2\arcsinh\!\left(\frac{1}{\sinh(B/2)}\right).
\]
\end{proof}

\subsection{The main estimate and proof}

We now prove Theorem~\ref{thm:mainestimate}. 

We fix a hexagon decomposition $a_1,\hdots,a_\kappa$, constructed in the previous subsection, which cuts the boundary into subarcs $b_1,\hdots,b_{2\kappa}$, again organized by length. 

By the estimates of the previous subsection, these three quantities are
controlled explicitly in terms of $\ell(\partial X)$ and $w(X)$.

\begin{figure}[ht]
 \centering
\begin{overpic}[width=.5\linewidth,
%grid,
tics=10]{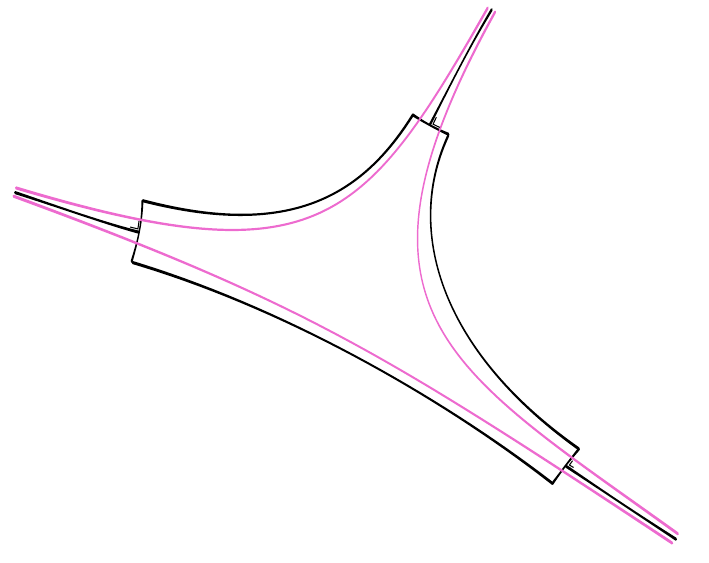}
% \put(32,74){$\alpha$}
% \put(68,21){$\gamma$}
% \put(51,48){\small $x$}
\end{overpic}
\caption{The construction of embedded ideal triangles.}
 \label{fig:Ideal}
\end{figure}

Let $\overline X$ be the complete hyperbolic surface obtained by attaching funnels to the boundary components of $X$. For each boundary side of each right-angled hexagon, consider its geodesic perpendicular bisector. The three bisectors associated to a hexagon define are interior and asymptotic to, and hence define, an embedded ideal triangle in $\overline X$ (see Figure \ref{fig:Ideal}). Let $T\subset\overline X$ be the union of these ideal triangles and put
\[
 S:=X\setminus T.
\]
Thus $S$ is a union of strips separating adjacent ideal triangles (see Figure \ref{fig:Gap02}). 

\begin{figure}[ht]
 \centering
\begin{overpic}[width=.7\linewidth,
%grid,
tics=10]{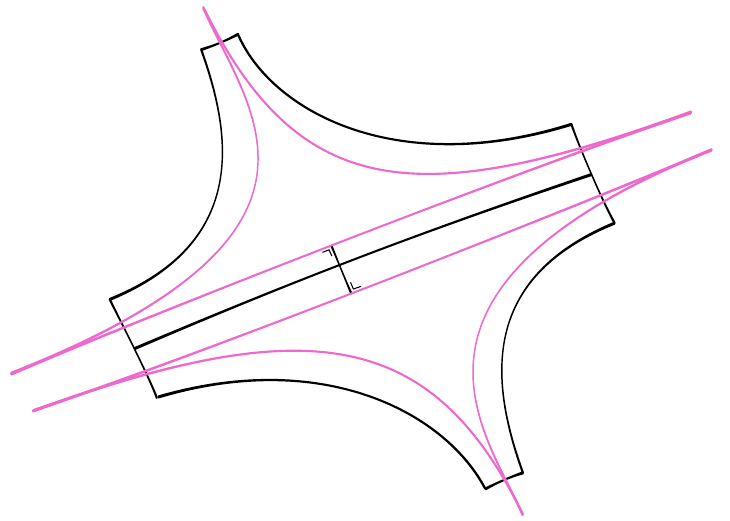}
% \put(32,74){$\alpha$}
% \put(68,21){$\gamma$}
% \put(51,48){\small $x$}
\end{overpic}
\caption{A strip between two ideal triangles}
 \label{fig:Gap02}
\end{figure}

Collapsing these strips identifies the ideal triangles along their sides and produces a
complete finite-area hyperbolic surface $X'$ in which the boundary components
of $X$ have become cusps.
\begin{figure}[h]
 \centering
\begin{overpic}[width=.7\linewidth,
%grid,
tics=10]{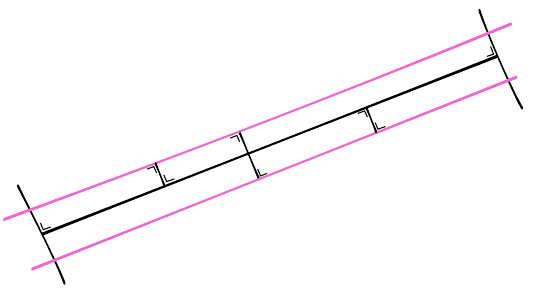}
\put(46,29){$h$}
\put(31,23){$h_1$}
\put(64,28){$h_2$}
\end{overpic}
\caption{The quantities $h$, $h_1$ and $h_2$ from the proof of
Lemma~\ref{lem:strip-width} The pink lines are arcs of an ideal triangle.}
 \label{fig:Gap02zoom}
\end{figure}

We begin by estimating the widths of the strips in terms of our geometric input. 

\begin{lemma}[Strip width]\label{lem:strip-width}
Every strip has width at least
\[
2\arcsinh\!\left(
\frac{1}{
\sinh\!\left(
\frac{a_\kappa}{2}
+
\arcsinh\!\left(\frac{1}{\sinh(b_1/2)}\right)
\right)}
\right).
\]
In particular, its width is always strictly greater than $d_0$ where
\begin{equation}\label{eq:strip-width-lower-bound}
d_0
:=
e^{-a_\kappa/2}\tanh\!\left(\frac{b_1}{2}\right).
\end{equation}
\end{lemma}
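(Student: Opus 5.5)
The plan is to work locally near one interior arc of the hexagon decomposition, since each strip of $S$ separates the two ideal triangles coming from the two right-angled hexagons $H_1,H_2$ adjacent along some interior arc $a=a_i$. In $H_j$, the side of the ideal triangle facing $a$ is a geodesic $\sigma_j$. It is asymptotic to the perpendicular bisectors of the two boundary sides $b,b'$ of $H_j$ that are adjacent to $a$. The strip is the region between $\sigma_1$ and $\sigma_2$, and its width can be measured along $a$. Following Figure \ref{fig:Gap02zoom}, I would write the width as $h=h_1+h_2$, where $h_j$ is the distance from $a$ to $\sigma_j$ inside $H_j$ (more precisely, from the point of $a$ nearest $\sigma_j$). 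It then suffices to prove
\[
\sinh h_j \ \geq\ \frac{1}{\sinh\!\left(\frac{a_\kappa}{2}+\arcsinh\!\left(\frac{1}{\sinh(b_1/2)}\right)\right)}
\]
for $j=1,2$.

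To bound $h_j$, I would cut along the perpendicular bisector $\beta$ of the boundary side $b$ that is adjacent to $a$ in $H_j$. This produces a trirectangle with one ideal vertex, bounded by half of $b$ (length $b/2\geq b_1/2$), a portion of $a$, the ray $\beta$, and $\sigma_j$ asymptotic to $\beta$. The standard formula $\sinh(\mathrm{dist})\sinh(r)=1$ applies to a geodesic asymptotic to a perpendicular ray, where $r$ is the distance along the base line to the foot of the ray. Applied first from the boundary side, it shows that $\sigma_j$ stays at distance at least $\arcsinh(1/\sinh(b/2))$ from the endpoint of $a$ on $\partial X$. Moving along $a$ adds at most $a/2\leq a_\kappa/2$ before reaching the point of $a$ nearest $\sigma_j$; by the symmetry of the two asymptotic ends, that point lies within $a/2$ of one endpoint. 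Together these give an effective ``$r$'' of at most $\frac{a_\kappa}{2}+\arcsinh(1/\sinh(b_1/2))$ for the same formula. Monotonicity in $a$ and $b$ then gives the displayed lower bound for $h_j$, and summing over $j=1,2$ gives the first claim. This trigonometric step is the main obstacle. The delicate part is identifying correctly which quadrilateral controls $h_j$ and verifying that replacing $a$ by $a_\kappa$ and $b$ by $b_1$ goes in the right direction. I would first try the one-ideal-vertex trirectangle formulas from \cite[p.~454]{BuserBook}, treating $\sigma_j$ as a geodesic asymptotic to $\beta$.

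For the simplified bound \eqref{eq:strip-width-lower-bound}, set $A=a_\kappa/2$, $B_0=\arcsinh(1/\sinh(b_1/2))$ and $t=\tanh(b_1/4)\in(0,1)$. We have
\[
\sinh(A+B_0)=\sinh A\cosh B_0+\cosh A\sinh B_0\leq e^{A}e^{B_0},
\]
and
\[
e^{B_0}=\frac{1+\cosh(b_1/2)}{\sinh(b_1/2)}=\coth(b_1/4).
\]
Hence $y:=1/\sinh(A+B_0)\geq e^{-A}t$. Next I would use $\arcsinh(y)\geq y/\sqrt{1+y^2}$, which gives $\arcsinh y\geq y/(1+t^2)$ with strict inequality for $y\leq t<1$. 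Since $\arcsinh$ is increasing, this yields
\[
2\arcsinh(y)\ \geq\ 2\arcsinh(e^{-A}t)\ >\ e^{-A}\,\frac{2t}{1+t^2}=e^{-a_\kappa/2}\tanh(b_1/2)=d_0.
\]
This proves the strict inequality.
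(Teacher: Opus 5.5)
\textbf{Verdict: genuine gap.} The lower bound on each $h_j$ is not established: the quadrilateral you use does not exist, and your intermediate distance claim is false. Your reduction to bounding $h_1,h_2$ separately is fine once you write $h\geq h_1+h_2$ with $h_j=d(\sigma_j,\ell_a)$, where $\ell_a$ is the complete geodesic through $a$; equality need not hold. Your passage from the first bound to $d_0$ is also correct. It differs slightly from the paper, using $e^{B_0}=\coth(b_1/4)$ and the double-angle identity, and it gives strictness cleanly.

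\textbf{Where it fails.} Write $L_b$ for the boundary geodesic containing $b$, $m_b$ for its midpoint, $e_b=a\cap L_b$, and $\xi_b$ for the ideal endpoint of $\beta$.
\begin{enumerate}
\item The quadrilateral you describe, bounded by half of $b$, a portion of $a$, $\beta$ and $\sigma_j$, does not close up. Both $\beta$ and $\ell_a$ are perpendicular to $L_b$, so they are ultraparallel. Hence $\xi_b$ and $\xi_{b'}$ lie on the same side of $\ell_a$, and $\sigma_j$ never meets $a$.
\item Your claim that $\sigma_j$ stays at distance at least $x_b:=\arcsinh(1/\sinh(b/2))$ from $e_b$ is false. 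Take the symmetric case $b=b'$, where the foot of the common perpendicular is the midpoint of $a$. There $\sinh d(e_b,\sigma_j)=\sinh(h_j)\cosh(a/2)$ and $\sinh(h_j)\sinh(a/2+x_b)=1$. Hence $\sinh d(e_b,\sigma_j)\leq 1/\sinh x_b=\sinh(b/2)$, so $d(e_b,\sigma_j)\leq b/2$. This is smaller than $x_b$ as soon as $\sinh(b/2)<1$, which is the typical thin-strip situation.
\item The identity $\sinh(b/2)\sinh(x_b)=1$ does not bound the distance from $e_b$ to a geodesic asymptotic to $\beta$. It locates the foot of the ideal point $\xi_b$ on $\ell_a$. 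So your ``effective $r$'' is not justified: a distance from $e_b$ to $\sigma_j$ plus a displacement along $a$ is not a side of any trirectangle.
\end{enumerate}

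\textbf{The missing idea.} Project the ideal vertex $\xi_b$ orthogonally onto $\ell_a$, landing at a point $y_b$.
\begin{enumerate}
\item The trirectangle $e_b\,m_b\,\xi_b\,y_b$, with right angles at the three finite vertices, gives $d(e_b,y_b)=x_b$.
\item Since the half-plane beyond $L_b$ projects onto the ray of $\ell_a$ beyond $e_b$, the point $y_b$ lies on the extension of $a$ into the funnel, not on $a$. The same holds for $y_{b'}$, so $d(y_b,y_{b'})=x_b+a+x_{b'}$.
\item Let $[c,z]$ be the common perpendicular of $\ell_a$ and $\sigma_j$. It cuts the quadrilateral $y_b\,\xi_b\,\xi_{b'}\,y_{b'}$ into two trirectangles, each with one ideal vertex. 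Each gives $\sinh(d(c,y))\sinh(h_j)=1$, so $c$ is the midpoint of $[y_b,y_{b'}]$, and
\[
\sinh(h_j)\,\sinh\!\left(\frac{a}{2}+\frac{x_b+x_{b'}}{2}\right)=1,
\qquad h_j=d(\sigma_j,\ell_a).
\]
\item This is decreasing in $a$ and increasing in $b,b'$, which yields the stated bound.
\end{enumerate}
This is the paper's computation. The paper first reduces to the symmetric extremal case $b=b'=b_1$, $a=a_\kappa$, where $c$ is the midpoint of $a$, and then uses $x=\arcsinh(1/\sinh(b_1/2))$ and $\sinh(a_\kappa/2+x)\sinh(h_i)=1$.
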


\begin{proof}
Let $h$ be the width of one of the strips. As in
Figure~\ref{fig:Gap02zoom}, the geometry on the two sides of the strip
gives two quantities $h_1$ and $h_2$ such that
\[
h\geq h_1+h_2.
\]
We estimate each $h_i$ separately. They are orthogeodesics between a hexagon side $a_j$ and the side of an ideal triangle, so we can estimate them in terms of the local geometry, see Figure \ref{fig:Gap01}.

\begin{figure}[h]
 \centering
\begin{overpic}[width=.9\linewidth,
%grid,
tics=10]{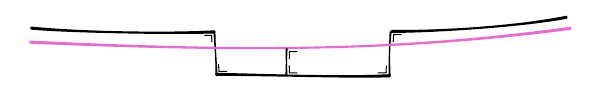}
\put(65.5,7.5){$\frac{b_l}{2}$}
\put(44,6.7){$h_i$}
\put(53,2){$a_j$}
\put(32,7){$\frac{b_k}{2}$}
\end{overpic}
\caption{The local geometry of $h_i$}
 \label{fig:Gap01}
\end{figure}

As we are aiming for a lower bound, we can suppose we are in the extremal situation where both $b_k$ and $b_l$ are as small as possible, that is equal to $b_1$. Furthermore, we can suppose that $a_j$ is as large as possible, hence equal to $a_\kappa$. In this case, by symmetry, $h_i$ splits $a_k$ into two equal parts, and we can now estimate $h_i$ by looking at one of the two sides of the configuration, portrayed in Figure \ref{fig:Gap01zoom}.

\begin{figure}[h]
 \centering
\begin{overpic}[width=.8\linewidth,
%grid,
tics=10]{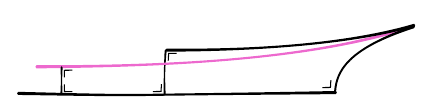}
\put(10.5,4.6){$h_i$}
\put(25,-2){$\frac{a_\kappa}{2}$}
\put(59,-1){$x$}
\put(34,6.5){$\frac{b_1}{2}$}
\end{overpic}
\caption{Estimating $h_1$ and $h_2$}
 \label{fig:Gap01zoom}
\end{figure}

Let $x$ denotes the auxiliary length shown in Figure~\ref{fig:Gap01zoom} which we will use for the computation. The standard identities for an ideal right triangle and a right-angled pentagon give
\[
x
=
\arcsinh\!\left(\frac{1}{\sinh(b_1/2)}\right)
\]
and
\[
\sinh\!\left(\frac{a_\kappa}{2}+x\right)\sinh(h_i)=1.
\]

Consequently,
\[
h_i
=
\arcsinh\!\left(
\frac{1}{
\sinh\!\left(
\frac{a_\kappa}{2}
+
\arcsinh\!\left(\frac{1}{\sinh(b_1/2)}\right)
\right)}
\right).
\]
Now as we only considered the extremal case in our computation, we can conclude that for both $i=1,2$, we have
\[
h_i
\geq
\arcsinh\!\left(
\frac{1}{
\sinh\!\left(
\frac{a_\kappa}{2}
+
\arcsinh\!\left(\frac{1}{\sinh(b_1/2)}\right)
\right)}
\right).
\]
For the simpler bound, set
\[
A=\frac{a_\kappa}{2},
\qquad
u=\frac{b_1}{2}.
\]
Using
\[
\sinh\!\left(\arcsinh\!\left(\frac1{\sinh u}\right)\right)
=\frac1{\sinh u}
\]
and
\[
\cosh\!\left(\arcsinh\!\left(\frac1{\sinh u}\right)\right)
=\coth u,
\]
we have
\[
\frac{1}{
\sinh\!\left(
A+\arcsinh\!\left(\frac1{\sinh u}\right)
\right)}
=
\frac{\sinh u}{\cosh A+\sinh A\cosh u}.
\]
Since
\[
\cosh A+\sinh A\cosh u
\leq e^A\cosh u,
\]
it follows that
\[
\frac{1}{
\sinh\!\left(
A+\arcsinh\!\left(\frac1{\sinh u}\right)
\right)}
\geq
e^{-A}\tanh u.
\]
Thus
\[
h
\geq
2\arcsinh\!\left(e^{-A}\tanh u\right).
\]
Since $0<e^{-A}\tanh u<1$ and $2\arcsinh(t)\geq t$ for $0\leq t\leq1$, we conclude that the strip width satisfies
\[
h
\geq
e^{-a_\kappa/2}\tanh\!\left(\frac{b_1}{2}\right)
=d_0.
\]
\end{proof}

In particular, every arc crossing a strip from one adjacent ideal-triangle to another has length at least $d_0$ where $d_0$ is defined in the previous lemma. 

Recalling that $T$ is the union of the ideal triangles and $S$ is its complement, for a closed geodesic $\gamma\subset X$, write
\[
 \gamma_T:=\gamma\cap T,
 \qquad
 \gamma_S:=\gamma\cap S.
\]
We first show that a definite proportion of the length of $\gamma$ lies in the strips.

\begin{lemma}[Proportion of length in the strips]
\label{lem:strip-proportion}
For every closed geodesic $\gamma$ on $X$,
\[
 \frac{\ell_X(\gamma_S)}{\ell_X(\gamma)}\geq p_0,
\]
where
\begin{equation}\label{eq:strip-proportion-constant}
 p_0
 :=
 \frac{d_0}
 {d_0+\frac32(a_\kappa+b_{2\kappa})}>0.
\end{equation}
\end{lemma}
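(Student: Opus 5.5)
The plan is to decompose $\gamma$ into alternating pieces: maximal subarcs inside ideal triangles and maximal subarcs crossing strips. Then I would show two things. Each piece inside $T$ has length at most $\frac32(a_\kappa+b_{2\kappa})$. Each such piece is followed by a strip crossing of length at least $d_0$. Pairing each triangle piece with the strip crossing that follows it (cyclically along the closed geodesic $\gamma$), we get $\ell(\gamma_T)\leq N\cdot \frac32(a_\kappa+b_{2\kappa})$ and $\ell(\gamma_S)\geq N d_0$, where $N$ is the number of triangle pieces. Hence
\[
\frac{\ell(\gamma_S)}{\ell(\gamma)}=\frac{\ell(\gamma_S)}{\ell(\gamma_S)+\ell(\gamma_T)}\geq \frac{N d_0}{N d_0+\frac32 N(a_\kappa+b_{2\kappa})}=p_0,
\]
using that $t\mapsto t/(t+c)$ is increasing. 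We also need $N\geq 1$. This holds because $\gamma$ is not contained in a single ideal triangle, since triangles are simply connected. It also cannot lie entirely in one strip, since each strip deformation retracts onto an arc.

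The first step is to check that every strip crossing really has length at least $d_0$. A maximal subarc of $\gamma$ in $S$ enters a strip through one triangle side and, being a geodesic in $X$, must leave. It cannot exit through the same side: two geodesics (the arc and the side) bounding a bigon in a simply connected strip is impossible. So it crosses from one side to the opposite one, and by Lemma~\ref{lem:strip-width} it has length at least the strip width, which exceeds $d_0$. One subtlety is that strips accumulate toward the boundary of $X$ and are thin there. However, $\gamma$ stays in $X$, and within $X$ the width estimate of Lemma~\ref{lem:strip-width} applies, because it was computed in the hexagon through the boundary sides $b_k$.

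The second step is the upper bound on the triangle pieces. This is the part I expect to be the main obstacle. A geodesic segment inside an ideal triangle can be arbitrarily long in general, going deep into a cusp. Here, though, $\gamma\subset X$, and each ideal triangle intersected with $X$ is contained in the corresponding right-angled hexagon (the triangle's vertices lie out in the funnels). So a triangle piece of $\gamma$ is a geodesic segment inside a convex right-angled hexagon whose sides are among the $a_i$ and $b_i$. Its length is at most the diameter of the hexagon. The diameter of a convex polygon is bounded by half its perimeter, and the perimeter is at most $3a_\kappa+3b_{2\kappa}$. This gives the bound $\frac32(a_\kappa+b_{2\kappa})$, matching the constant in \eqref{eq:strip-proportion-constant}. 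I would verify carefully that $T\cap X$ lies inside the union of hexagons in a way that keeps each triangle piece within a single hexagon. If a triangle piece could pass through an arc $a_j$ into the neighboring hexagon, I would instead split it at the $a_j$ crossings and use the fact that the bisector construction makes each triangle meet $X$ only in its own hexagon.
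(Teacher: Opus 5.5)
Your proposal is correct and is essentially the paper's proof: with $N$ the number of strip crossings, the paper likewise uses $\ell_X(\gamma_S)\geq Nd_0$, bounds each intervening triangle piece by half the perimeter of the convex right-angled hexagon containing it, i.e.\ by $\frac32(a_\kappa+b_{2\kappa})$, and concludes via the monotonicity of $t\mapsto t/(t+c)$. The containment you flag does hold: each side of a hexagon's ideal triangle stays on the hexagon's side of the extended interior arc it faces, so the triangle meets $X$ only inside that hexagon. Your fallback of splitting pieces at $a_j$ crossings is therefore unnecessary, and it would in any case have broken the one-to-one pairing with strip crossings.
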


\begin{proof}
Let $N$ be the number of strip crossings of $\gamma$. By
\eqref{eq:strip-width-lower-bound} of Lemma \ref{lem:strip-width},
\[
 \ell_X(\gamma_S)\geq Nd_0.
\]

Between two consecutive strip crossings, $\gamma$ passes through one of the ideal-triangles contained in a right-angled hexagon. The corresponding segment is no longer than the shorter of the two boundary paths in the hexagon joining its endpoints, and hence no longer than half the perimeter of the hexagon. Since its three interior sides have length at most $a_\kappa$ and its three boundary sides have length at most $b_{2\kappa}$, we obtain
\[
 \ell_X(\gamma_T)
 \leq
 \frac32N(a_\kappa+b_{2\kappa}).
\]
Consequently,
\begin{align*}
 \frac{\ell_X(\gamma_S)}{\ell_X(\gamma)}
 &=
 \frac{\ell_X(\gamma_S)}
 {\ell_X(\gamma_S)+\ell_X(\gamma_T)}\\
 &\geq
 \frac{\ell_X(\gamma_S)}
 {\ell_X(\gamma_S)+\frac32N(a_\kappa+b_{2\kappa})}\\
 &\geq
 \frac{d_0}{d_0+\frac32(a_\kappa+b_{2\kappa})},
\end{align*}
which is the desired estimate.
\end{proof}

We next quantify the length decrease on a single strip. Set
\begin{equation}\label{eq:A0-definition}
 A_0:=a_\kappa+\frac{b_{2\kappa}}2
\end{equation}
and define
\begin{equation}\label{eq:local-strip-contraction}
 r_0
 :=
 \frac{A_0}
 {\arccosh\!\left(\cosh(A_0)\cosh(d_0/2)\right)}.
\end{equation}
Since $d_0>0$, the denominator is strictly larger than $A_0$, and therefore
\[
 0<r_0<1.
\]

\begin{lemma}[Local strip contraction]
\label{lem:local-strip-contraction}
Let $\eta$ be a component of $\gamma_S$, and let $\eta'$ be the corresponding
arc after the strip containing $\eta$ is collapsed. Then
\[
 \ell_{X'}(\eta')\leq r_0\,\ell_X(\eta).
\]
\end{lemma}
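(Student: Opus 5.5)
We will argue by a direct hyperbolic-trigonometric comparison, reading the constant $r_0$ in \eqref{eq:local-strip-contraction} as the ratio of two lengths in a right-angled triangle. The component $\eta$ is a geodesic arc crossing a strip from one ideal triangle side to another. After collapsing, its endpoints (one on each bounding side of the strip) are identified with points on a single geodesic side of the glued ideal triangles in $X'$. So $\eta'$ can be taken to be the arc of that common side joining the images of the two endpoints, and its length is their displacement along the side. The plan is to bound $\ell_X(\eta)$ from below in terms of this displacement $t$ and the strip width, and then to check that the ratio $t/\ell_X(\eta)$ is largest in the extremal configuration.

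First we localize. Since $\eta$ is a subarc of the closed geodesic $\gamma$ and the strip lies inside $X$, $\eta$ enters and exits the strip within the union of the two hexagons adjacent to the interior arc $a_j$ that the strip straddles. Measuring from the foot of the common perpendicular $h$ (the strip's core, of length at least $d_0$ by Lemma~\ref{lem:strip-width}), each endpoint of $\eta$ is at distance at most roughly $A_0=a_\kappa+b_{2\kappa}/2$ along its bounding side. This uses the bounds $a_i\le a_\kappa$ and $b_i\le b_{2\kappa}$ of Proposition~\ref{prop:hexagon-bounds}: the relevant portion of each bounding side within $X$ projects into $a_j$ together with half a boundary side. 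The collapse map is defined so that points on the two bounding sides at the same signed distance $s$ from the feet of the common perpendicular are identified. Hence $\ell_{X'}(\eta')=|s_1-s_2|$ when $\eta$ crosses, or at least $\ell_{X'}(\eta')\le \max(|s_1|,|s_2|)$ up to the sign conventions we will fix.

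Second comes the trigonometric estimate. Consider the quadrilateral formed by $h$, the two bounding geodesics, and $\eta$. Using the Lambert quadrilateral or right-angled triangle formula $\cosh c=\cosh a\cosh b$, a geodesic joining points at distances $s_1,s_2$ along two geodesics with common perpendicular of length $h\ge d_0$ has length at least
\[
\arccosh\bigl(\cosh(|s_1-s_2|)\cosh(d_0/2)\bigr)
\]
or a similar expression. To get this, split at the midpoint of $h$ and use that the orthogonal projections to a geodesic are $1$-Lipschitz. Writing $s=\ell_{X'}(\eta')\le A_0$, we then need the monotonicity fact that
\[
f(s)=\frac{s}{\arccosh(\cosh s\cosh(d_0/2))}
\]
is increasing in $s$, which follows from convexity of $\arccosh(c\cosh s)-s$ in the right direction, or from direct differentiation. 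Granting this, $f(s)\le f(A_0)=r_0$, which is the claim.

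The main obstacle is the second step: producing a clean lower bound for $\ell_X(\eta)$ of exactly the form $\arccosh(\cosh s\cosh(d_0/2))$. Strips are not bounded by ultraparallel geodesics with a common perpendicular in a symmetric way, since the two bounding sides are asymptotic ideal-triangle edges. So we will first try to compare with the right triangle having one leg $d_0/2$ (half the core) and the other leg the displacement, using projection onto the core's midline. If that fails we will fall back on a cruder bound, which still yields some $r_0<1$ after adjusting constants.
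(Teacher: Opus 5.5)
Your outline is the paper's: bound $u=\ell_{X'}(\eta')$ by $A_0$ using the hexagon geometry, show $\ell_X(\eta)\geq\arccosh(\cosh u\cosh(d_0/2))$ from the strip width, and finish with monotonicity of $u\mapsto u/\arccosh(\cosh u\cosh t)$. The monotonicity is immediate: with $g(u)=\arccosh(\cosh u\cosh t)$ one has $g(u)>u$ and $0<g'(u)<1$, so $g-ug'>0$.

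\textbf{The gap is the middle step, which you leave open.} You write ``or a similar expression'' and offer a fallback that changes the constant. That fallback would not prove the lemma, because $r_0$ is fixed by \eqref{eq:local-strip-contraction}.

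The obstacle you cite comes from misreading the construction. The two geodesics bounding a strip are not asymptotic. Pairwise asymptotic edges are the three sides of a single ideal triangle. A strip lies between a side of the ideal triangle in one hexagon and a side of the ideal triangle in the adjacent hexagon across $a_j$, and these two geodesics are ultraparallel. If they were asymptotic the strip would have width $0$, contradicting Lemma~\ref{lem:strip-width}, which you yourself invoke to get a common perpendicular of length $h\geq d_0$.

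The tool you propose is also too weak. $1$-Lipschitz projections give $\ell_X(\eta)\geq u$ (projecting to the collapsed line) or $\ell_X(\eta)\geq h$ (projecting to the common perpendicular). Neither gives the product-type bound $\cosh\ell_X(\eta)\geq\cosh u\cosh(h/2)$.

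\textbf{How to close it.} Let $s_1,s_2$ be the signed distances, measured towards the same side, from the feet of the common perpendicular to the endpoints of $\eta$. The collapse identifies points with equal signed distance, so $u=|s_1-s_2|$. It is not bounded by $\max(|s_1|,|s_2|)$ as you suggest: when $s_1,s_2$ have opposite signs, $u=|s_1|+|s_2|$. Since $\eta$ is a geodesic segment, the formula for a quadrilateral with two right angles (the one used in \eqref{eq:carrier-fermi}) gives
\[
\cosh\ell_X(\eta)=\cosh s_1\cosh s_2\cosh h-\sinh s_1\sinh s_2=\cosh u+\cosh s_1\cosh s_2\,(\cosh h-1).
\]
Use $2\cosh s_1\cosh s_2=\cosh(s_1+s_2)+\cosh u\geq 1+\cosh u$ and $\cosh h-1=2\sinh^2(h/2)$. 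The right-hand side is then at least $\cosh u\cosh^2(h/2)+\sinh^2(h/2)$, which is at least $\cosh u\cosh(d_0/2)$. This is exactly the comparison the paper's proof invokes.

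With this computation, and with the hexagon bound $u\leq A_0$ applied to the difference $|s_1-s_2|$ as the paper asserts, your argument goes through.
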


\begin{proof}
Write $u=\ell_{X'}(\eta')$. The geometry of the hexagon gives
\[
 u\leq A_0.
\]
Moreover, the strip crossed by $\eta$ has width at least $d_0$. Splitting the
crossing at the perpendicular realizing the strip width and applying the
hyperbolic right-triangle identity gives the comparison
\[
 \ell_X(\eta)
 \geq
 \arccosh\!\left(\cosh(u)\cosh(d_0/2)\right).
\]
Hence
\[
 \frac{\ell_{X'}(\eta')}{\ell_X(\eta)}
 \leq
 \frac{u}
 {\arccosh\!\left(\cosh(u)\cosh(d_0/2)\right)}.
\]
For fixed $t>0$, the function
\[
 u\longmapsto
 \frac{u}{\arccosh(\cosh(u)\cosh t)}
\]
is increasing on $]0,+\infty[$. Since $u\leq A_0$, the right-hand side is at
most $r_0$, proving the claim.
\end{proof}

Combining the two preceding lemmas gives the required uniform contraction.
Define
\begin{equation}\label{eq:global-contraction-constant}
 \lambda
 :=1-(1-r_0)p_0.
\end{equation}
Since $p_0>0$ and $r_0<1$, we have
\[
 0<\lambda<1.
\]
Notice that, using \eqref{eq:strip-width-lower-bound},
\eqref{eq:strip-proportion-constant}, \eqref{eq:A0-definition}, and
\eqref{eq:local-strip-contraction}, this is a completely explicit expression
in $a_\kappa$, $b_1$, and $b_{2\kappa}$.

\begin{proposition}[Uniform length contraction]
\label{prop:uniform-strip-contraction}
For every closed geodesic $\gamma$ on $X$, the geodesic in the corresponding
free homotopy class on the collapsed cusped surface $X'$ satisfies
\[
 \ell_{X'}(\gamma)\leq\lambda\,\ell_X(\gamma).
\]
\end{proposition}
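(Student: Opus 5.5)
We combine Lemma~\ref{lem:strip-proportion} with Lemma~\ref{lem:local-strip-contraction}, using the fact that collapsing the strips does not lengthen the pieces of $\gamma$ lying in the ideal triangles. Fix a closed geodesic $\gamma$ on $X$ and decompose it as $\gamma=\gamma_T\cup\gamma_S$. Collapsing the strips defines a map $\pi\colon X\to X'$ which is an isometry on the interior of each ideal triangle. It sends each strip-crossing arc $\eta$ of $\gamma_S$ to an arc $\eta'$ joining the images of its endpoints. The image $\pi(\gamma)$ is therefore a closed curve in $X'$ in the free homotopy class corresponding to $\gamma$. Since the geodesic representative minimizes length in its free homotopy class,
\[
\ell_{X'}(\gamma)\leq \ell_{X'}(\pi(\gamma)) = \ell_X(\gamma_T)+\sum_{\eta}\ell_{X'}(\eta').
\]
One point to settle is that the homotopy class is preserved. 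The map $\pi$ is a homotopy equivalence from $X$ (or $\overline X$ retracted onto it) to $X'$ with the ends identified. Essential curves in $X$ that are not boundary parallel stay essential, and boundary curves become cusps.

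Next, apply Lemma~\ref{lem:local-strip-contraction} to each component $\eta$, which gives $\ell_{X'}(\eta')\leq r_0\,\ell_X(\eta)$. Summing over all components,
\[
\ell_{X'}(\gamma)\leq \ell_X(\gamma_T)+r_0\,\ell_X(\gamma_S) = \ell_X(\gamma)-(1-r_0)\,\ell_X(\gamma_S).
\]
By Lemma~\ref{lem:strip-proportion}, $\ell_X(\gamma_S)\geq p_0\,\ell_X(\gamma)$. Since $1-r_0>0$, this yields
\[
\ell_{X'}(\gamma)\leq \bigl(1-(1-r_0)p_0\bigr)\ell_X(\gamma)=\lambda\,\ell_X(\gamma),
\]
which is the claim.

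The main obstacle is making sure the comparison map is well defined and length non-increasing on the triangle pieces. In particular, the pieces $\eta'$ must be the arcs to which Lemma~\ref{lem:local-strip-contraction} applies: they have endpoints on the collapsed seam, and their length $u$ is at most $A_0$. I would handle this by defining $\pi$ explicitly, collapsing each strip along the foliation by geodesic segments orthogonal to its core perpendicular. Each strip-crossing segment of $\gamma$ then maps to a curve in $X'$ homotopic rel endpoints to a segment on the glued side of the adjacent triangles. The length bound of Lemma~\ref{lem:local-strip-contraction} controls that segment, rather than the literal image, and replacing the image by it only decreases length without changing the free homotopy class.

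A secondary point is that $\gamma$ could stay entirely inside one strip, with no triangle pieces. This cannot happen for a closed geodesic, since strips are simply connected bands between triangles. It also cannot happen for a boundary-parallel curve, which is excluded by our definition of curve. So every closed geodesic crosses strips at least once, and the decomposition above is nonvacuous.
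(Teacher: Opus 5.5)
Your proof is correct and is essentially the paper's argument: the collapse is isometric on the ideal triangles, and each strip crossing is shortened by the factor $r_0$ of Lemma~\ref{lem:local-strip-contraction}. The proportion bound $p_0$ of Lemma~\ref{lem:strip-proportion} then gives $\ell_X(\gamma)-(1-r_0)\ell_X(\gamma_S)\leq\lambda\,\ell_X(\gamma)$, and geodesic tightening in $X'$ finishes (your extra remarks on homotopy invariance are harmless elaborations the paper leaves implicit). One small correction to your aside on defining the collapse: geodesics orthogonal to a strip's common perpendicular run lengthwise along the strip. The collapse that restricts to the gluing isometries on its two sides should instead go along the curves equidistant from that common perpendicular, which cross the strip orthogonally to both sides and give a $1$-Lipschitz projection onto the seam.
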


\begin{proof}
The collapse is isometric on the ideal-triangle regions and contracts every
component of $\gamma_S$ by at least the factor $r_0$. Thus the image of
$\gamma$ in $X'$ has length at most
\begin{align*}
 \ell_X(\gamma_T)+r_0\ell_X(\gamma_S)
 &=
 \ell_X(\gamma)-(1-r_0)\ell_X(\gamma_S)\\
 &\leq
 \bigl(1-(1-r_0)p_0\bigr)\ell_X(\gamma)\\
 &=\lambda\ell_X(\gamma),
\end{align*}
where we used Lemma~\ref{lem:strip-proportion}. Geodesic tightening in $X'$
can only decrease length, which proves the proposition.
\end{proof}

We can now finish the counting argument. If $X$ has genus $g$ and $n$
boundary components, then $X'$ has genus $g$ and $n$ cusps. The collapse
preserves free homotopy classes, so Proposition~\ref{prop:uniform-strip-contraction}
gives
\[
 N_X(L)\leq N_{X'}(\lambda L).
\]
Applying Lemma~\ref{lem:buser} to $X'$ yields
\begin{equation}\label{eq:strip-counting-bound}
 N_X(L)
 \leq
 \bigl(g+\lfloor n/2\rfloor\bigr)e^{\lambda L+6}.
\end{equation}
In particular,
\[
 h_X\leq\lambda<1.
\]

Finally, the estimates in the preceding subsection express the extremal
hexagon data $a_\kappa$, $b_1$, and $b_{2\kappa}$, and hence the constants
$d_0$, $p_0$, $r_0$, and $\lambda$, explicitly in terms of the total boundary
length $\ell(\partial X)$ and the width $w(X)$. Together with the corresponding
control of the topology, \eqref{eq:strip-counting-bound} gives the constants
asserted in Theorem~\ref{thm:mainestimate}.
\begin{remark}[Explicit values]\label{rem:explicit}
The contraction constant can be estimated explicitly in terms of the
extremal lengths of the hexagon decomposition. Set
\[
A:=a_\kappa+\frac{b_{2\kappa}}2,
\qquad
C:=a_\kappa+b_{2\kappa},
\qquad
t:=\frac{d_0}{2}
=
\frac12e^{-a_\kappa/2}\tanh\!\left(\frac{b_1}{2}\right).
\]
By the definitions of $p_0$, $r_0$, and $\lambda$,
\[
\lambda
=
1-
\left(
1-
\frac{A}{\arccosh(\cosh A\cosh t)}
\right)
\frac{2t}{2t+\frac32C}.
\]

We now give a simpler, slightly weaker estimate. The elementary inequality
\[
\arccosh(\cosh A\cosh t)\geq\sqrt{A^2+t^2}
\]
gives
\[
1-r_0
\geq
1-\frac{A}{\sqrt{A^2+t^2}}
=
\frac{t^2}{
\sqrt{A^2+t^2}
\bigl(\sqrt{A^2+t^2}+A\bigr)}.
\]
Since
\[
\tanh\!\left(\frac{b_1}{2}\right)
\leq \frac{b_1}{2},
\]
we have
\[
t\leq\frac{b_1}{4}
\leq\frac{b_{2\kappa}}4
\leq\frac C4.
\]
In particular,
\[
A+t
\leq
a_\kappa+\frac34b_{2\kappa}
\leq C,
\]
and hence
\[
1-r_0\geq\frac{t^2}{2C^2}.
\]
Similarly,
\[
p_0
=
\frac{2t}{2t+\frac32C}
\geq
\frac{t}{C}.
\]
It follows that
\[
1-\lambda
=
(1-r_0)p_0
\geq
\frac{t^3}{2C^3}.
\]
Substituting the value of $t$, we obtain
\[
\lambda
\leq
1-
\frac{1}{16}
\left(
\frac{\tanh(b_1/2)}
{a_\kappa+b_{2\kappa}}
\right)^3
e^{-3a_\kappa/2}.
\]

Combining this with Proposition~\ref{prop:hexagon-bounds}, we have
\[
a_\kappa\leq2W,
\qquad
b_{2\kappa}<B,
\qquad
\frac{b_1}{2}
>
\arcsinh\!\left(\frac1{\sinh(2W)}\right)
=
\log\coth W.
\]
Moreover,
\[
\tanh(\log\coth W)=\frac1{\cosh(2W)}.
\]
Therefore
\[
\lambda
<
1-
\frac{e^{-3W}}
{16\,\cosh^3(2W)(B+2W)^3}.
\]
\end{remark}
\subsection{Applications of Theorem \ref{thm:mainestimate}}

\subsubsection{Upper bounding boundary length}

Before proving consequences of Theorem \ref{thm:mainestimate}, we focus on its dependency on $B$ and $W$. It depends on an upper bound for $W$ and on both lower and upper bounds on $B$. Here we show that, at the cost of introducing a dependency on topology, we can remove the dependency on the upper bound on $B$. 

\begin{lemma}[Boundary reduction lemma]
\label{lem:boundary-reduction}
Let $X$ be a compact hyperbolic surface with nonempty geodesic boundary and width $w(X)$. Then there exists a hyperbolic surface $X'$, homeomorphic to $X$, such that
\begin{enumerate}[i.]
 \item
 \[
\ell(\partial X') \leq 4\, \area(X)= 4 \, \area(X')
 \]
 \item
 \[
 w(X')\leq w(X)+\frac{1}{3};
 \]
 \item every closed curve $\gamma$ satisfies \[
 \ell_{X'}(\gamma)\leq \ell_X(\gamma).
 \]
\end{enumerate}

\end{lemma}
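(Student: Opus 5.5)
We propose to shrink the long boundary components one at a time, using the reverse of the length expansion lemma (Lemma \ref{lem:lengthexpansion}). Read backwards, that lemma says the following. If a boundary geodesic of $X$ has length $\ell_1$, we can pass to a surface homeomorphic to $X$ in which that boundary has length $\ell_1-\delta$ and every closed curve has strictly smaller length. We will make the contraction explicit through strip deformations along the orthogeodesics of the hexagon decomposition dual to the boundary Voronoi decomposition, as in the proof of Proposition \ref{prop:hexagon-bounds}. Removing a strip of width $s$ along a simple orthogeodesic arc $a$ is $1$-Lipschitz away from the strip, and it shortens every curve crossing $a$. Property iii.\ will therefore hold automatically for the final surface $X'$. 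Property i.'s equality $\area(X)=\area(X')$ is just Gauss--Bonnet, since the topology does not change.

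Step one is to locate where the boundary is long. Suppose $\ell(\partial X)>4\,\area(X)=8\pi|\chi(X)|$. The boundary is cut into $2\kappa=6|\chi(X)|$ boundary arcs $b_i$, so the average boundary arc has length greater than $\frac{4}{3}\pi$. In particular some hexagon has a long boundary side. Long boundary sides sit next to short interior arcs: if $b$ is long, the two adjacent orthogeodesics satisfy $\sinh x\sinh y\cosh b\approx$ bounded by right-angled hexagon trigonometry. Thus the width near such a side is essentially governed by the opposite interior side. We will therefore perform strip deformations along the interior arcs $a_j$ adjacent to long boundary sides. On the boundary, this removes a segment from each of the two boundary arcs at the endpoints of $a_j$. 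We continue the deformation continuously, shrinking boundary lengths, until $\ell(\partial X')=4\,\area(X)$. Alternatively, and more cleanly, we stop once every boundary arc $b_i$ has length at most $\frac43\pi$, which gives i.

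Step two, which we expect to be the main obstacle, is controlling the width, property ii. A strip deformation is length non-increasing, so distances to the boundary could a priori shrink or grow: points in the interior get closer to each other, but the boundary also shrinks. The plan is to show that every point $x\in X'$ has a path to $\partial X'$ of length at most $w(X)+\frac13$. The path we use is the image of a distance path in $X$ from the preimage of $x$ to $\partial X$. If that path ends on a removed piece of boundary, we extend it along the remaining boundary or along the collapsed strip side. The key estimate is that we only collapse strips on hexagons whose boundary sides remain longer than a definite amount, around $\frac43\pi$. Right-angled hexagon trigonometry then shows that the detour needed to reach surviving boundary from the collapsed region is at most $\frac13$: a hexagon with two long boundary sides has the relevant interior side exponentially short, and the extra distance is bounded by an $\arcsinh$ of a small quantity. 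Concretely, I would first try to perform the deformation only in the middle portion of each long boundary arc, a neighborhood of the perpendicular bisector as in the ideal triangle construction. There the distance from the bisector region to the boundary is bounded by $\arcsinh(1/\sinh(\cdot))$ of half the remaining arc length. Taking the remaining arc length to be at least $\frac43\pi$ then gives the bound $\frac13$. I expect the numerical constants to need adjusting during this verification.
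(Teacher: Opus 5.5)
Your treatment of iii.\ and of $\area(X')=\area(X)$ is fine, but both substantive steps have genuine gaps. In step one, the local principle you rely on is false. For a right-angled hexagon with interior sides $x,y,z$ and boundary side $b$ between $x$ and $y$, one has $\cosh z=\sinh x\sinh y\cosh b-\cosh x\cosh y$. So $\sinh x\sinh y\cosh b$ is bounded only if $z$ is: with $x=y=1$, $b\to\infty$ as $z\to\infty$. In the Voronoi-dual decomposition you only know $z\le 2w(X)$, so any ``short arc next to a long side'' threshold depends on $w(X)$, which is incompatible with a universal additive constant in ii. The collar lemma also only guarantees room for a strip of definite size around a \emph{short} orthogeodesic. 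Moreover, after a removal the hexagon boundary sides are not the old ones minus the removed pieces: the old arcs are no longer orthogonal to the new boundary, and the new sides are recomputed by hexagon trigonometry from the new orthogeodesics. So nothing guarantees that the stopping rule ``every $b_i\le\frac43\pi$'' can be reached. Iterating, or running the deformation continuously, also lets the width increments accumulate.

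The missing idea is to aim at the orthosystole rather than at individual hexagon sides. If every orthogeodesic of $X'$ has length at least $2t$, the $t$-neighbourhood of $\partial X'$ embeds and $\ell(\partial X')\sinh t<\area(X')=\area(X)$. This is how the paper proceeds. The orthogeodesics of length at most $\arcsinh(1)$ are simple and pairwise disjoint, with disjoint collars. In \emph{one} step the paper removes from each such collar a fixed-shape region bounded by two geodesic segments meeting $\partial X$ at angle $\phi=5\pi/12$, and reglues. It then argues via a Lambert quadrilateral that every orthogeodesic of the result has length at least $2t$ with $\sinh t\ge\sin(\pi/12)$, which gives $\ell(\partial X')<(\sqrt6+\sqrt2)\area(X)<4\area(X)$.

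In step two you misidentify where $\partial X'$ is. The geodesic sides of a removed strip cannot meet $\partial X$ orthogonally at all four endpoints, since $a$ is the only common perpendicular of the two boundary lifts. So after gluing, the image of $\partial X$ is a broken geodesic with corners (of interior angle $2\pi-2\phi>\pi$ in the paper's construction). The new boundary $\partial X'$ is the closed geodesic in that class, lying in an annulus attached \emph{outside} the image of $X$. Points of that annulus have no preimage in $X$. A distance path ending on the broken curve still has to travel to $\partial X'$, and going ``along the remaining boundary'' does not get there. So ii.\ is really a bound on the Hausdorff distance between the broken curve and its geodesic. That bound depends on the corner angle and on a \emph{lower} bound $\ell_0$ for the lengths of the segments between consecutive corners; the paper gets $\ell_0/2\ge F(\arcsinh 1)\approx 0.27$ from disjointness of the collars. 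Without such a bound the boundary class need not even remain hyperbolic: collapsing strips is precisely how the proof of Theorem~\ref{thm:mainestimate} turns boundary components into cusps. Your $\arcsinh(1/\sinh(\cdot))$ estimate does not address this displacement.

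When you do the computation, use the relevant Lambert quadrilateral: acute angle $\phi$ at a corner, with adjacent sides $\ell_0/2$ and $w$. There the identity is $\cos\phi=\tanh(\ell_0/2)\tanh w$, i.e.\ $\tanh w=\cos\phi\coth(\ell_0/2)$, not $\cos\phi\cosh(\ell_0/2)$ as written in the paper. With $\phi=5\pi/12$ and $\ell_0/2\approx 0.27$, just above $\arctanh(\cos\phi)\approx 0.265$, this still guarantees a genuine geodesic boundary. In the extremal configuration, however, it gives a displacement of about $2$ rather than $\frac13$. A universal constant is all that is used later, in Theorem~\ref{thm:nf}.
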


\begin{proof}
If $B(X) < 4 \,\area(X)$, we simply take $X'=X$. We therefore assume
that
\[
 B(X)\geq 4 \, \area(X).
\]
Before getting into the proof, here is a brief summary of the strategy. Long boundary forces the existence of short orthogeodesics (and vice-versa). We shall thus modify the surface so that all short orthogeodesics disappear. To do so, we perform a particular kind of strip deformation which consists in removing specific collar neighborhoods of the short curves. 

We consider all orthogeodesics of length at most $\arcsinh(1)$ and call them {\it short}. By the collar estimates in subsection \ref{ss:curves}, they are simple, disjoint and hence can be completed into a hexagon decomposition. 

We now perform a boundary-reducing strip deformation. We consider the collar neighborhood of each short orthogeodesic $a$, of width $\arcsinh(\frac{1}{\sinh{a}})$. 

We consider a somewhat original geometric subset of this collar determined as follows: for an angle $\phi \leq \frac{\pi}{2}$, consider the unique geodesic which makes interior angles $\phi$ with the same two sides as $a$ (see Figure \ref{fig:Boundary01}). 

\begin{figure}[h]
 \centering
\begin{overpic}[width=.5\linewidth,
%grid,
tics=10]{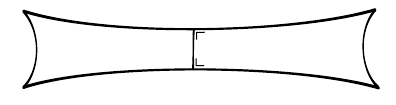}
%\put(25,-2){$\frac{a_\kappa}{2}$}
\put(44,13){$a$}
\put(10,17){$\phi$}
\put(10,8){$\phi$}
\put(88,17){$\phi$}
\put(88,8){$\phi$}
\end{overpic}
\caption{A geometric subset of the collar neighborhood of $a$}
 \label{fig:Boundary01}
\end{figure}

If $\phi = \frac{\pi}{2}$, then this geodesic segment is just $a$. We want to choose $\phi$ so that the segment lies in the collar neighborhood of $a$, and with other properties that will be necessary later. This region will be called the $\phi$-region around $a$. 

We set $\phi = \frac{5 \pi}{12}$ and first observe that the segment is contained in the collar: consider the quadrilateral with side $a$ between two right-angles and the remaining two angles equal to $\phi$. 

\begin{figure}[h]
 \centering
\begin{overpic}[width=.8\linewidth,
%grid,
tics=10]{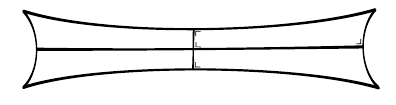}
%\put(25,-2){$\frac{a_\kappa}{2}$}
\end{overpic}

 \vspace{1em}
 
 \begin{overpic}[width=.4\linewidth,
%grid,
tics=10]{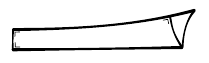}
\put(0,10){$\frac{a}{2}$}
\put(85,18){$\phi$}
\put(45,1.5){$x$}
\put(91.5,13){$y$}
\put(79,13){$t$}

\end{overpic}
\caption{A quadrilateral for computations}
 \label{fig:Boundary02}
\end{figure}

By a standard quadrilateral formula, the side length between a point where the angle is $\phi$ and the side $a$ is of length 
\[
\arcsinh\!\left(\coth\!\left(\frac{a}{2}\right)\cot\!\left(\frac{5\pi}{12}\right)\right). 
\]
And hence we set $F(a)$ to be the difference between the collar width and this quantity:

\[
F(a)
=
\arcsinh\!\left(\frac{1}{\sinh a}\right)
-
\arcsinh\!\left(\coth\!\left(\frac{a}{2}\right) \cot\!\left(\frac{5\pi}{12}\right)\right).
\]
 Elementary calculus shows that this function is strictly decreasing in $a$ and hence has a lower bound when $a=\arcsinh(1)$, so 
\[
F(a) \geq \arcsinh(1)-\arcsinh\!\left((1+\sqrt{2})(2-\sqrt{3})\right) \approx 0.27
\]

We note, for later use, that these neighborhoods contain all points which are very close to $\partial X$. In fact, we can compute a lower bound on the width restricted to all points outside these neighborhoods. Consider the side labelled $t$ in Figure \ref{fig:Boundary02}. Notice that $t$ is the shortest distance to the boundary among all points in the quadrilateral, and hence by symmetry, in the region. We can compute $t$ just by using Lambert quadrilateral formulas. First we compute $x$: 
\[
x=\arcsinh\left(\frac{\cos\phi}{\sinh(a/2)}\right)
\]
and with our choice of $\phi$ we have 
\[
x=\arcsinh\left(\frac{\cos(5\pi/12)}{\sinh(a/2)}\right)
=\arcsinh\left(\frac{\sqrt6-\sqrt2}{4\sinh(a/2)}\right).
\]
Cutting along $t$ produces a smaller Lambert quadrilateral in which we can now compute $t$: 

\[
t
=
\arcsinh\left(
\sinh\left(\frac{a}{2}\right)\cosh(t)
\right)
=
\arcsinh\left(
\sqrt{
\sinh^2\left(\frac{a}{2}\right)
+
\cos^2(\phi)
}
\right)
\]
and hence
\[
t
=
\arcsinh\left(
\sqrt{
\sinh^2\left(\frac{a}{2}\right)
+
\cos^2\left(\frac{5\pi}{12}\right)
}
\right)
=
\arcsinh\left(
\sqrt{
\sinh^2\left(\frac{a}{2}\right)
+
\frac{2-\sqrt{3}}{4}
}
\right).
\] 
Note that, for any $a>0$, we have
\begin{equation}\label{eq:t}
t >
 \arcsinh\left(
\sqrt{
\frac{2-\sqrt{3}}{4} 
} 
\right) > \frac{1}{4}.
\end{equation}
Thus, outside of the $\phi$-regions around the short orthogeodesics, all points with at least two distance paths to boundary, are distance at least the above value from the boundary. 

We now perform a type of strip map, by removing, for each short $a$, the $\phi$-regions from $X$ and pasting the resulting two equal length sides together. Said otherwise, we collapse the $\phi$-regions onto one arc which is the image of the left and the right sides from Figure \ref{fig:Boundary01}. This results in a singular surface $Y$ with cone points along the boundary of interior angle $2\pi -2 \phi$, two for each short orthogeodesic. 

The singular surface $Y$ has an important property: the arcs that make up the singular boundary components of $Y$ all have a minimal length. This is due to the fact that we collapsed regions strictly contained in the collar regions around the short orthogeodesics. As any two short orthogeodesics were at distance at least twice the collar width, the estimate for $F(a)$ shows that each arc of the singular curves are of length twice $F(a)$, hence at least length 
\[
\ell_0:= 2\left( \arcsinh\!\left(\frac{1}{\sinh a}\right)
-
\arcsinh\!\left(\coth\!\left(\frac{a}{2}\right) \cot\!\left(\frac{5\pi}{12}\right)\right)\right).
\]

The singular surface $Y$ has a canonical completion to a hyperbolic surface with smooth geodesic boundary: along every singular boundary component one attaches the unique hyperbolic annulus, singular on its inner side, in which $Y$ embeds isometrically. Denote this completed surface by $X'$. The local picture of singular point of $Y$ and the hyperbolic annulus attached to obtain is portrayed in Figure \ref{fig:Boundary03}.

\begin{figure}[h]
 \centering
\begin{overpic}[width=.5\linewidth,
%grid,
tics=10]{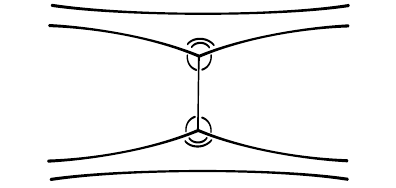}
\put(48,39.5){$2\phi$}
\put(32,30){$\pi - \phi$}
\put(54,30){$\pi - \phi$}
\end{overpic}
\caption{A geometric subset of the collar neighborhood of $a$}
 \label{fig:Boundary03}
\end{figure}

We now proceed to show that $X'$ has the desired properties. First of all, it is not obvious that the above construction works, and in fact, had we not chosen the angle $\phi$ carefully, it wouldn't have. It could be that the completed hyperbolic metric could have a cone point or a cusp instead of the needed geodesic boundary. The fact that we get geodesic boundary follows from our $\ell_0$ lower bound on arc length. The computation is the same one we will use for our next goal: for each boundary component of $\partial X'$, we bound its Hausdorff distance to the corresponding boundary component of $\partial Y$. 

We lift a singular boundary component of $Y$ to $\Hyp$. We consider the resulting broken geodesic (infinite) where each arc is of length at least $\ell_0$ and the angles are all equal to $2 \phi$ (and going in the same direction). We need to show that it has two distinct points at infinity and then compute a bound on the Hausdorff distance between the two geodesics. The extremal case is when all arcs of length exactly $\ell_0$. Supposing the result we aim to show is correct, we can break the extremal situation into a collection of isometric quadrilaterals, with a side of length $\ell_0$ between two angles of $\phi$, and with the other two angles being right angles (see Figure \ref{fig:Boundary04} for a schematic drawing of the situation). It all boils down to whether this picture makes sense with our values. (Said otherwise, whether the two geodesics leaving from the singular points meet in $\Hyp$, on the boundary or not at all.) 

\begin{figure}[h]
 \centering
\begin{overpic}[width=1\linewidth,
%grid,
tics=10]{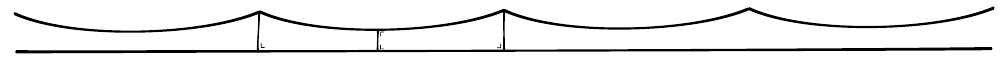}
\put(74,2.7){$2\phi$}
\put(86,5){$\ell_0$}
\end{overpic}
\vspace{0.1cm}
\begin{overpic}[width=0.7\linewidth,
%grid,
tics=10]{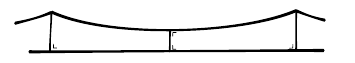}
\put(15,10){$\phi$}
\put(65,6){$\frac{\ell_0}{2}$}
\put(85,7){$w$}
\put(80,10){$\phi$}
\end{overpic}

\caption{Bounding the distance between $Y$ and $X'$}
 \label{fig:Boundary04}
\end{figure}

And yet once again, we compute in a Lambert quadrilateral as in Figure \ref{fig:Boundary04}. The Hausdorff distance is bounded by $w$ which satisfies 
\[
\tanh(w) = \cos(\phi) \cosh\left(\frac{\ell_0}{2}\right)
\]
and so we get 

\[ w <
\arctanh\left(
\frac{\sqrt{6}-\sqrt{2}}{4}
\cosh\left(
\arcsinh(1)
-
\arcsinh\left((1+\sqrt{2})(2-\sqrt{3})\right)
\right)
\right) \approx 0.27
\]

and in particular is less than $\frac{1}{3}$. Note that the above computation also shows that the singular geodesics lies in the free homotopy class of a genuine boundary geodesic. Otherwise the computation of $\tanh(w)$ would have resulted in a value $\geq 1$, which would have rendered taking the $\arctanh$ impossible. In particular, we have that
\[
w(X') < w(X) + \frac{1}{3}.
\]
By the standard monotonicity property of strip deformations, removing
the strips weakly decreases the length of every marked closed
geodesic. Therefore, for every nontrivial free homotopy class
$\gamma$,
\[
 \ell_{X'}(\gamma)\leq \ell_X(\gamma).
\]
We next estimate the boundary length. As we removed the $\phi$-regions from $X$ to obtain $Y$, and since for every point of $Y$ the distance to the boundary of $X'$ is even further away, we have that every orthogeodesic on $X'$ is of length at least 

Every arc in the terminal dual decomposition has length at least $2t$ where $t$ was estimated in Equation \ref{eq:t}. Thus the regions of width $t$ around $\partial X'$ are embedded. By the standard area argument, it follows that 
\[
\ell(\partial X') \sinh(t )< \area(X')= \area(X)
\]
and thus 
\[
\ell(\partial X') < \frac{1}{\sinh(t)} \area(X') < \left( \sqrt{6} + \sqrt{2} \right) \area(X') < 4 \,\area(X').
\]
The surface $X'$ has all of the desired properties. 

\end{proof}

\subsubsection{Diameters}\label{ss:diameter}

Diameters of surfaces are interesting, but sometimes difficult to handle, geometric invariants. For instance, there are very few surfaces for which we actually know their (finite) diameters. Nonetheless, if you are presented with an actual surface, concrete bounds on diameter can often be deduced from how the surface is represented. For instance, if the surface is given by a fundamental domain, twice the covering radius of the domain is an immediate bound. 

In this section we show Corollary \ref{cor:diameter-explicit}, an explicit reformulation of Corollary \ref{cor:diam} from the introduction. 

We first show that the diameter of $X$, a compact hyperbolic surface with boundary geodesics, controls both the topology and the total
boundary length. Recall that $\kappa$ is the arc complexity of $X$. 

\begin{lemma}\label{lem:diameter-boundary}
We have
\[
 \kappa\leq 3\bigl(\cosh(\diam(X))-1\bigr)
\]
and
\[
 B\leq 2(\kappa+1)\diam(X)
 \leq 2\diam(X)\bigl(3\cosh(\diam(X))-2\bigr).
\]
\end{lemma}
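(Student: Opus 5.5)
The first inequality will come from an area argument, and the second from a covering-by-arcs argument using a spanning tree.

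\textbf{Bounding $\kappa$.} Fix a point $x\in X$ and pass to the double $X^D$, or to the complete surface $\overline X$. Every point of $X$ is within $D:=\diam(X)$ of $x$, so $X$ is contained in the image of the hyperbolic disk of radius $D$ centred at a lift of $x$ in the universal cover. That disk has area $2\pi(\cosh D-1)$. The covering projection restricted to a lift of $X$ does not increase area, so
\[
\area(X)=2\pi|\chi(X)|\leq 2\pi(\cosh D-1).
\]
To make this precise I would use a Dirichlet fundamental domain for $X$ centred at $x$ inside the convex universal cover $\widetilde X\subset\Hyp$. Since $\widetilde X$ is convex, distances in $X$ are realised by projections of geodesics in $\widetilde X$, so this domain lies in the ball of radius $D$. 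Combining with $\kappa=3|\chi(X)|$ from the hexagon decomposition count gives
\[
\kappa\leq 3(\cosh D-1).
\]

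\textbf{Bounding $B$.} Take the shortest hexagon decomposition, or any decomposition by orthogeodesics $a_1,\dots,a_\kappa$. It cuts $\partial X$ into $2\kappa$ boundary arcs $b_i$. The natural route is to bound each $b_i$ by $2D$ and conclude $B\leq 4\kappa D$. This is off by the constant, so I want a sharper scheme giving $2(\kappa+1)D$.

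The better plan is to cover $\partial X$ by segments of length at most $2D$ each, and to count at most $\kappa+1$ of them. For each boundary component $\beta$ with $\ell(\beta)>2D$, pick a point $p\in\beta$. The point antipodal to $p$ along $\beta$ is joined to $p$ by a path in $X$ of length at most $D$. That path, together with either half of $\beta$, forms an essential closed curve or an essential arc configuration. Shortening it shows the existence of short orthogeodesics that chop $\beta$ into pieces. Repeatedly cutting along orthogeodesics of length at most $D$ then reduces to pieces whose boundary segments have length at most $2D$.

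Each cut along an arc of length at most $D$ adds at most one new piece. Since there are at most $\kappa$ disjoint arcs, the total boundary length is at most $2D$ times the number of segments, which is at most $\kappa+1$. An alternative is to argue via a spanning tree of the dual graph: the boundary is traversed by a loop made of at most $\kappa+1$ geodesic segments of length at most $2D$, each joining two points through the fixed base point.

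\textbf{Main obstacle.} The hard step is the constant in the second inequality: getting exactly $\kappa+1$ segments of length at most $2D$. This needs a careful accounting of the cutting procedure, namely that cut arcs stay disjoint, so there are at most $\kappa$ of them, and that each final boundary segment has length at most $2\diam(X)$. I would first try the cutting argument, using the fact that a boundary subarc of length greater than $2D$ has its midpoint within distance $D$ of its endpoints through the interior, which forces a non-trivial orthogeodesic shortcut.

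The final inequality then follows by substituting the bound on $\kappa$:
\[
2(\kappa+1)D\leq 2D\bigl(3\cosh D-2\bigr).
\]
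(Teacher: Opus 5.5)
Your bound on $\kappa$ is correct and follows the paper's argument. A ball of radius $\diam(X)$ about a lift of $x$ in the convex universal cover surjects onto $X$, so $2\pi|\chi(X)|\leq 2\pi(\cosh\diam(X)-1)$, and $\kappa=3|\chi(X)|$. The final substitution is also fine.

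The gap is in the bound on $B$. Your cutting scheme cannot give $\kappa+1$ segments. Every orthogeodesic has two endpoints on $\partial X$, so $k$ disjoint arcs cut $\partial X$ into as many as $2k$ segments (exactly $2\kappa$ for a hexagon decomposition). The phrase ``each cut adds at most one new piece'' counts components of the cut surface, not boundary segments. So this route returns at best the $4\kappa\diam(X)$ you wanted to avoid. Even that is not established, because the per-segment bound $2\diam(X)$ is never proved. The shortcut of length at most $\diam(X)$ from the midpoint of a long boundary segment is essential, but it may cross arcs you have already chosen. You then lose the disjointness needed to invoke the bound by $\kappa$, and nothing shows the procedure terminates. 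The spanning-tree alternative is stated but not argued.

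The missing idea is to \emph{cover} $\partial X$ rather than cut it. Fix $p\in\partial X$ and a lift $\widetilde p$. For each $q\in\partial X$ take a minimizing path from $p$ to $q$. It has length at most $\diam(X)$, and its lift from $\widetilde p$ ends on some lift $L$ of a boundary component. All endpoints attached to the same $L$ lie in $L\cap B_{\Hyp}(\widetilde p,\diam(X))$. Since $L$ is a geodesic, this is an interval of length at most $2\diam(X)$. Hence $B$ is at most $2\diam(X)$ times the number of lifts $L$ that occur, and $\kappa+1$ bounds that number. One lift passes through $\widetilde p$ itself. The paper bounds the others by $\kappa$: minimizing paths from a common point cannot cross, so they give disjoint essential arcs that extend to a hexagon decomposition. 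So the $\kappa+1$ is one interval per homotopy class of minimizing paths from a fixed boundary point, not one segment per cut.

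Alternatively, you can count with the Dirichlet polygon of $\widetilde p$ in $\widetilde X$, which you already use for the area bound. Its sides lying in $\partial\widetilde X$ project onto $\partial X$, and each lies in $B_{\Hyp}(\widetilde p,\diam(X))$. An Euler characteristic count on the polygon shows there are at most $2|\chi(X)|+2\leq\kappa+1$ such sides.
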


\begin{proof}
Fix a point $p\in\partial X$. Consider minimizing geodesic paths from $p$ to
points of $\partial X$, where two such paths are considered equivalent if they
are homotopic while the second endpoint is allowed to move along its boundary
component. Apart from the class contained in the boundary component through
$p$, all of these classes are essential arc classes.

Choose a length minimizing representative for each essential arc class. Since
all of these geodesic segments leave from $p$ and they are distance paths, two of them cannot intersect transversely in their interiors: cutting at a first intersection would produce a broken minimizing path, which can be shortened. After separating their common endpoint $p$ by a small perturbation, they therefore give pairwise disjoint, pairwise non-isotopic essential arcs. Such a collection extends to a hexagon decomposition, and hence contains at most $\kappa$ arcs. Thus there are at most $\kappa$ essential classes.

Now fix one of these classes and lift $p$ to a point $\widetilde p$ in the convex universal cover $\widetilde X\subset\Hyp^2$. The class determines a lift $L$ of the boundary component containing its second endpoint. If a point $q\in\partial X$ is joined to $p$ by a minimizing path in this class, then its corresponding lift $\widetilde q\in L$ satisfies
\[
 d_{\Hyp^2}(\widetilde p,\widetilde q)\leq \diam(X).
\]
Hence all such endpoints lie in
\[
 L\cap B_{\Hyp^2}(\widetilde p,\diam(X)),
\]
which is an interval of $L$ of length at most $2\,\diam(X)$. The same conclusion
holds for the boundary class through $p$. Since every point of $\partial X$
is the endpoint of some minimizing path from $p$, and there are at most
$\kappa+1$ classes altogether, we obtain
\[
 B\leq 2(\kappa+1)\diam(X).
\]

It remains to control $\kappa$. Fix $x\in X$ and a lift $\widetilde x$.
Every point of $X$ can be joined to $x$ by a path of length at most
$\diam(X)$, so the metric ball
$B_{\widetilde X}(\widetilde x,\diam(X))$ maps onto $X$. Since
$\widetilde X$ is a convex subset of $\Hyp^2$,
\[
 \area(X)
 \leq
 \area\bigl(B_{\Hyp^2}(\widetilde x,\diam(X))\bigr)
 =2\pi\bigl(\cosh(\diam(X))-1\bigr).
\]
By Gauss--Bonnet and the identity $\kappa=3|\chi(X)|$ proved above,
\[
 \kappa
 =3|\chi(X)|
 \leq 3\bigl(\cosh(\diam(X))-1\bigr).
\]
Substituting this into the preceding boundary estimate gives
\[
 B\leq 2\,\diam(X)\bigl(3\cosh(\diam(X))-2\bigr),
\]
as required.
\end{proof}

We can now make the diameter consequence of Theorem~\ref{thm:mainestimate} completely explicit.

\begin{corollary}\label{cor:diameter-explicit}
Let $X$ be a compact hyperbolic surface with nonempty geodesic boundary. Then
\[
N_X(L)
\leq
\frac{e^6}{2}
\bigl(\cosh(\diam(X))+1\bigr)e^{\eta(X)L},
\]
where
\[
\eta(X)
:=
1-
\frac{e^{-3\,\diam(X)}}
{2^7\diam(X)^3
 \cosh^3(2\,\diam(X))
 \bigl(3\cosh(\diam(X))-1\bigr)^3}.
\]
\end{corollary}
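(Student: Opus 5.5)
The plan is to feed the diameter bounds of Lemma~\ref{lem:diameter-boundary} into the two explicit ingredients already established: the counting bound \eqref{eq:strip-counting-bound}, namely $N_X(L)\le (g+\lfloor n/2\rfloor)e^{\lambda L+6}$, and the explicit estimate on $\lambda$ from Remark~\ref{rem:explicit}. Throughout, write $D:=\diam(X)$.

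\textbf{Step 1: reduce to data controlled by $D$.} I will first note that the width is at most the diameter. Indeed, for any $x\in X$ and any $p\in\partial X$ we have $d(x,\partial X)\le d(x,p)\le D$, so $W\le D$. Next, every ingredient of the bound
\[
\lambda<1-\frac{e^{-3W}}{16\cosh^3(2W)(B+2W)^3}
\]
from Remark~\ref{rem:explicit} is monotone in the right direction. The factors $e^{-3W}$, $\cosh(2W)^{-3}$ and $(B+2W)^{-3}$ all decrease as $W$ or $B$ increases. So it suffices to replace $W$ by $D$ and $B$ by the upper bound $2D(3\cosh D-2)$ from Lemma~\ref{lem:diameter-boundary}.

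\textbf{Step 2: compute the exponent.} With these substitutions,
\[
B+2W\le 2D(3\cosh D-2)+2D=2D(3\cosh D-1),
\]
so $(B+2W)^3\le 8D^3(3\cosh D-1)^3$. Multiplying by the factor $16$ gives the denominator $2^7D^3\cosh^3(2D)(3\cosh D-1)^3$. Hence $\lambda<\eta(X)$, and since $N_X(L)$ is increasing in the exponent, we may replace $\lambda$ by $\eta(X)$ in the counting bound.

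\textbf{Step 3: bound the topological prefactor.} Here $\chi=2-2g-n$, so $|\chi|=2g-2+n$, and therefore
\[
g+\lfloor n/2\rfloor\le g+\frac n2=\frac{|\chi|+2}{2}.
\]
Using $\kappa=3|\chi|$ together with $\kappa\le 3(\cosh D-1)$ from Lemma~\ref{lem:diameter-boundary}, we get $|\chi|\le\cosh D-1$. This yields $g+\lfloor n/2\rfloor\le\frac{\cosh D+1}{2}$, which produces exactly the prefactor $\frac{e^6}{2}(\cosh D+1)$.

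The only delicate point is Step 1. One must check that the explicit bound in Remark~\ref{rem:explicit} used only the upper bounds $a_\kappa\le 2W$ and $b_{2\kappa}<B$, and the lower bound on $b_1$ expressed through $W$. All of these degrade monotonically as $W$ and $B$ grow, so replacing $W$ and $B$ by their upper bounds in terms of $D$ is legitimate. Everything else is direct substitution.
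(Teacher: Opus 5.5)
Your proposal is correct and follows the paper's proof essentially step for step: you bound $W\le\diam(X)$, substitute $B+2W\le 2\,\diam(X)\bigl(3\cosh(\diam(X))-1\bigr)$ from Lemma~\ref{lem:diameter-boundary} into the explicit estimate of Remark~\ref{rem:explicit} to get $\lambda<\eta(X)$, and control the prefactor via $g+\lfloor n/2\rfloor\le(|\chi|+2)/2\le(\cosh(\diam(X))+1)/2$. Your explicit monotonicity check in $W$ and $B$ is exactly what the paper uses implicitly when it replaces $e^{-3W}$, $\cosh(2W)$ and $B+2W$ by their diameter bounds.
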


\begin{proof}
We have
\[
W=w(X)\leq\diam(X).
\]
and, by Remark~\ref{rem:explicit}, 
\[
\lambda
<
1-
\frac{e^{-3W}}
{16\,\cosh^3(2W)(B+2W)^3}.
\]
Lemma~\ref{lem:diameter-boundary} gives
\[
B+2W
\leq
2\,\diam(X)\bigl(3\cosh(\diam(X))-1\bigr).
\]
Since $W\leq\diam(X)$, we also have
\[
e^{-3W}\geq e^{-3\diam(X)}
\]
and
\[
\cosh(2W)\leq\cosh(2\,\diam(X)).
\]
Replacing the above values gives us
\[
\lambda
<
1-
\frac{e^{-3\diam(X)}}
{2^7\diam(X)^3
 \cosh^3(2\,\diam(X))
 \bigl(3\cosh(\diam(X))-1\bigr)^3}
=
\eta(X).
\]
Finally, if $X$ has genus $g$ and $n$ boundary components, then
\[
g+\lfloor n/2\rfloor
\leq
\frac{|\chi(X)|+2}{2}
\leq
\frac{\cosh(\diam(X))+1}{2}.
\]
Applying \eqref{eq:strip-counting-bound} and using
$\lambda<\eta(X)$ yields
\[
N_X(L)
\leq
\frac{e^6}{2}
\bigl(\cosh(\diam(X))+1\bigr)e^{\eta(X)L},
\]
which proves the result.
\end{proof}

\subsubsection{Non-filling curves}\label{ss:nf}

Here we prove Theorem \ref{cor:nf}. Recall that for a surface $X$ and $L>0$, $N^{\nf}_X(L)$ is the number of primitive closed geodesics of length at most $L$ that do not fill $X$, and $N^0_X(L)$ is the number of simple closed geodesics of length at most $L$. The set of all simple geodesics on $X$ is denoted $\G^0(X)$. 

We restate it as follows.

\begin{theorem}\label{thm:nf}
Let $\varepsilon>0$ and $g\geq 2$. There exist constants $a=a(g,\varepsilon)<1$ and $A=A(g,\varepsilon)$ such that, for every $X\in \M_g^{\varepsilon}$ and every $L>0$, 
\[
N^{\nf}_X(L) \leq A\, e^{aL}.
\]
\end{theorem}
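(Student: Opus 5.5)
The plan is the one sketched in the introduction: a non-filling geodesic misses some simple closed geodesic, so it lies in the complement of that geodesic. Let $\gamma$ be a primitive closed geodesic of length at most $L$ that does not fill $X$. Then some component of $X\setminus\gamma$ is not a disk or a once-punctured disk, and so it contains an essential simple closed curve. The geodesic representative $\alpha$ of that curve is disjoint from $\gamma$, or equal to $\gamma$ when $\gamma$ is simple. The first task is to bound $\ell(\alpha)$ in terms of $L$ and $g$. I would take the geodesic subsurface $Y$ filled by $\gamma$. Its boundary curves are disjoint from $\gamma$, and their total length is at most about $2L$, since the boundary of a regular neighbourhood of $\gamma$ has length roughly $2\ell(\gamma)$ and tightening only shortens it. At least one boundary curve of $Y$ is essential in $X$, because $Y\neq X$. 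If $\gamma$ is simple we take $\alpha=\gamma$. In every case we obtain $\alpha\in\G^0(X)$ with $\ell(\alpha)\leq 2L$ and $\gamma\subset X\setminus\alpha$.

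This gives
\[
N^{\nf}_X(L)\leq N^0_X(2L) + \sum_{\alpha\in\G^0(X),\,\ell(\alpha)\leq 2L} N_{X\setminus\alpha}(L),
\]
where $X\setminus\alpha$ is the surface with geodesic boundary obtained by cutting along $\alpha$, with one or two components. We then need two bounds, each uniform over $\M_g^\varepsilon$.

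For simple curves, the standard polynomial bound $N^0_X(L)\leq P_{g,\varepsilon}(L)$ holds in the thick part; Rivin, or Mirzakhani's counting with constants uniform on compact sets, gives this. Alternatively, one can bound $N^0_X(2L)$ by $N_X(2L)$ restricted to simple curves via a crude combinatorial count in a bounded-length pants decomposition (Bers' constant). Only a bound of the form $C(g,\varepsilon)e^{\delta L}$ with $\delta$ small is needed, and any subexponential bound suffices.

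For the complements, apply Theorem \ref{thm:mainestimate} to each component $Z$ of $X\setminus\alpha$. Its boundary length satisfies $\varepsilon\leq B\leq 2\ell(\alpha)$, and the $B\leq 4L$ dependence is bad for uniformity. The upper bound on $B$ is therefore removed with Lemma \ref{lem:boundary-reduction}. It replaces $Z$ by $Z'$ with $\ell(\partial Z')\leq 4\area(Z)\leq 8\pi(g-1)$ and $w(Z')\leq w(Z)+\frac13$, and it does not increase lengths, so $N_Z(L)\leq N_{Z'}(L)$. The width of $Z$ is bounded by \eqref{eq:systolewidth}, since all closed geodesics of $Z$, boundary included, have length at least $\varepsilon$: this gives $w(Z)\leq \area(Z)/(2\sinh(\varepsilon/4))$. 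Hence $w(Z')\leq W_0(g,\varepsilon)$. Remark \ref{rem:explicit} gives
\[
\lambda<1-\frac{e^{-3W}}{16\cosh^3(2W)(B+2W)^3},
\]
which only needs upper bounds on $B$ and $W$. Together with \eqref{eq:strip-counting-bound} and $g+\lfloor n/2\rfloor\leq g+1$, this yields $N_Z(L)\leq (g+1)e^{6}e^{\lambda_0 L}$ with $\lambda_0=\lambda_0(g,\varepsilon)<1$. Summing over at most two components gives $N_{X\setminus\alpha}(L)\leq 2(g+1)e^6e^{\lambda_0L}$.

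Combining, $N^{\nf}_X(L)\leq P(2L)\bigl(1+2(g+1)e^6e^{\lambda_0L}\bigr)$. Absorbing the polynomial into the exponential then gives $a=(1+\lambda_0)/2<1$ and a suitable $A(g,\varepsilon)$. I expect the main obstacle to be the uniformity of the polynomial count of simple geodesics over $\M_g^\varepsilon$, together with the $B$-dependence in Theorem \ref{thm:mainestimate}. The second is why Lemma \ref{lem:boundary-reduction} must be invoked: without it, $\lambda$ would tend to $1$ as $\ell(\alpha)$ grows with $L$.
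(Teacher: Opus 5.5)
Your proposal is correct and follows the paper's proof essentially step for step: reduce to complements of simple closed geodesics of length at most $2L$, bound their number polynomially and uniformly on $\M_g^{\varepsilon}$, and control each $N_{X\setminus\alpha}(L)$ uniformly by passing through Lemma~\ref{lem:boundary-reduction} and the explicit form of $\lambda$ in Remark~\ref{rem:explicit}, which indeed only needs upper bounds on $B$ and $W$. The only slip is a harmless arithmetic one: $4\,\area(Z)\leq 4\,\area(X)=16\pi(g-1)$, not $8\pi(g-1)$.
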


\begin{proof}
The proof uses Theorem \ref{thm:mainestimate}, Lemma \ref{lem:boundary-reduction} and the fact that there is a polynomial upper bound on $N_X^0(L)$. 

Mirzakhani \cite{Mirzakhani2008} showed that, as $L$ grows, $N_X^0(L)$ is asymptotic to $C_X L^{6g-6}$ where $C_X$ is a proper function on $\M_g$. These results followed previous polynomial upper bounds \cite{BirmanSeries,Rees,Rivin} and the case of once-punctured tori \cite{McShaneRivin}. By Mumford compactness, $\M_g^\varepsilon$ is compact, so, together with the observation that there are at most $3g-3$ simple curves of length at most $2\arcsinh(1)$, there exists a constant $C_{g,\varepsilon} >0$ such that 
\[
N_X^0(L) \leq C_{g,\varepsilon} L^{6g-6}.
\]

Now, by definition, any non-filling closed geodesic lies in the complement of a simple closed geodesic. In addition, observe that if a geodesic $\gamma$ fills $Y$, a proper subsurface of $X$, then $\ell(\partial Y) < 2 \ell(\gamma)$, hence $\gamma$ lies on the complement of a simple curve of length at most $2L$. Hence:
\[
N^{\nf}_X(L) \leq N_X^0( L) + N_X^0( 2 L) \sup_{\alpha \in \G^0(X)} N_{X\setminus \alpha}(L).
\]
The first term covers the simple curves which are, of course, also non-filling. To conclude, we need to show that $N_{X\setminus \alpha}(L)$ is indeed bounded by a constant which depends on $\varepsilon$ and $g$.

For any $\alpha \in \G^0(X)$, the subsurface $Y=X \setminus \alpha$ has width bounded above by the diameter of $X$, hence by a function of $\varepsilon$ and $g$ (as in inequality \ref{eq:systolewidth}). 

Furthermore, the length of $\alpha$ is lower bounded by $\sys(X)\geq \varepsilon$. Now using Lemma \ref{lem:boundary-reduction}, we find an upper bound for all $\alpha$ of length at most $\area(X)$, up to increasing the width by an additive constant. We can now apply Theorem \ref{thm:mainestimate} to show 
\[
N_{X\setminus \alpha}(L) \leq C_1\, e^{C_2 L}
\]
where $C_1,C_2$ are (explicit) functions of $\varepsilon$ and $g$ and which satisfy $C_2 <1$. 
Now in order to conclude, it suffices to incorporate the polynomial term from simple geodesics into the inequality, by choosing any $a > C_2$ (with $a<1$) and increasing $C_1$ to $A$ to account for shorter lengths. 
\end{proof}

Note that, unlike for Theorem \ref{thm:mainestimate}, we do not provide explicit constants because we use a non-explicit inequality for $N_X^0(L)$. However, it is possible to compute such an explicit inequality, but, as this is not the main point of the paper, we omit doing it here. 

As a corollary, we obtain a quantifiable gap in the entropy spectrum. 

\begin{corollary}\label{cor:gapexplicit}
For any $\varepsilon >0$ and $g \geq 2$, there exists a constant $h_{g, \varepsilon} < 1$ such that any $X\in \M_g^\varepsilon$ satisfies
\[
\sup_{Y \subsetneq X} h(Y) \leq h_{g,\varepsilon}.
\]
In particular, there is a quantifiable gap between $1$ and other entropies in $\E(X)$. 
\end{corollary}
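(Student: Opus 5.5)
The plan is to take $h_{g,\varepsilon}:=a(g,\varepsilon)$, the exponent from Theorem \ref{thm:nf}, and show that every proper connected subsurface $Y\subsetneq X$ has $h_Y\leq a(g,\varepsilon)$. Since $a(g,\varepsilon)<1$ depends only on $g$ and $\varepsilon$, this gives the uniform gap.

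Fix $X\in\M_g^\varepsilon$ and a proper essential connected geodesic subsurface $Y\subsetneq X$. The key observation is that every primitive closed geodesic of $Y$ is non-filling in $X$. Indeed, $Y$ is proper, so $\partial Y$ is a nonempty collection of simple closed geodesics of $X$, at least one of which is essential and non-peripheral in $X$ (as $X$ is closed). Any closed geodesic contained in $Y$ is disjoint from that boundary curve, or is one of the boundary curves, which are simple and hence non-filling. The inclusion $Y\hookrightarrow X$ is $\pi_1$-injective because $Y$ is essential with geodesic boundary. Therefore distinct primitive closed geodesics of $Y$ remain distinct primitive closed geodesics of $X$, with the same length. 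This yields
\[
N_Y(L)\leq N^{\nf}_X(L)\leq A\,e^{aL}
\]
for all $L>0$, by Theorem \ref{thm:nf}.

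Using the characterization $h_Y=\lim_{L\to\infty}\frac{\log N_Y(L)}{L}$ recalled in Section \ref{sec:setup}, we get
\[
h_Y\leq \limsup_{L\to\infty}\frac{\log A + aL}{L}=a(g,\varepsilon).
\]
Taking the supremum over all proper $Y$ gives $\sup_{Y\subsetneq X}h(Y)\leq h_{g,\varepsilon}:=a(g,\varepsilon)<1$. Since $1\in\E(X)$ with multiplicity one, realized by $Y=X$, all other values of $\E(X)$ lie in $]0,h_{g,\varepsilon}]$. This is the quantifiable gap.

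The main point needing care is the injectivity of the map from closed geodesics of $Y$ to non-filling closed geodesics of $X$. One must check that primitivity is preserved and that no two geodesics of $Y$ become identified. Both follow from $\pi_1$-injectivity of essential subsurfaces together with uniqueness of geodesic representatives in free homotopy classes. A minor point is that subsurfaces may be disconnected. Here, however, the spectrum is defined through connected subsurfaces, and in any case $h$ of a disjoint union is the maximum over its components, so the bound applies in both settings.
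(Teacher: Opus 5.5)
Your proposal is correct and matches the paper's proof. The paper also observes that $N_Y(L)\leq N^{\nf}_X(L)$ for every proper subsurface $Y$ and then applies Theorem~\ref{thm:nf}, so $h_{g,\varepsilon}=a(g,\varepsilon)$; your extra checks (nonempty essential boundary, distinct primitive geodesics staying distinct in $X$, the disconnected case) only make explicit what the paper leaves implicit.
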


\begin{proof}
For every proper subsurface $Y\subset X$ and any $L$, we have 
\[
N_{Y}(L) \leq N^{\nf}_X(L). 
\]
We now apply the bounds from Theorem \ref{thm:nf} to conclude.
\end{proof}

It would be interesting to know whether there are any Laplace spectral interpretations of the above estimates. Recent results concerning eigenvalues of the Laplace spectrum have used the fact that, for a random Weil-Petersson surface \cite{GuthParlierYoung,MirzakhaniRandom}, estimates on the the beginning of the length spectrum (see \cite{AnantharamanMonkI, AnantharamanMonkII, LipnowskiWright, WuXue2025}). In particular, Wu and Xue show that curves up to length $\approx \sqrt{g}$ are mostly simple and, in contrast, by work of Dozier and Sapir, most curves are filling for lengths beyond $\approx (\log g)^2 g$ \cite{DozierSapir}. 

\section{The entropy spectrum}\label{sec:entropy}

We now focus, for closed $X$, on $\E(X)$, the multiset of entropies of subsurfaces of $X$. 

\subsection{Bounding the entropy of surfaces of surfaces with long systoles}

This subsection is dedicated to proving our entropy upper bound for surfaces with long systoles. 

\subsubsection{Voronoi decompositions, carrier graphs and width estimates}\label{ss:voronoi}

Let $X$ be a compact hyperbolic surface with nonempty geodesic boundary, and fix $W>0$ such that
\[
 w(X)\leq W.
\]
We construct a geodesic graph in $X$ which carries every closed curve, with a length distortion depending only on $W$. We state the proposition for closed geodesics but it really holds for any rectifiable non-trivial closed curve. We should think of $G_X$ as equipped with its intrinsic path metric, that is with its edge lengths being the lengths of the corresponding arcs on $X$.

\begin{proposition}[Carrier graph]\label{prop:carrier-graph} There is a finite embedded geodesic graph $G_X\subset X$ such that every closed geodesic $\gamma$ in $X$ is freely homotopic to a closed path $\sigma\subset G_X$ satisfying
\[
 \ell_{G_X}(\sigma)
 \leq \cosh(W)\,\ell_X(\gamma).
\]
\end{proposition}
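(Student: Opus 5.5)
The plan is to take $G_X$ to be the cut locus of the Voronoi decomposition relative to $\partial X$ introduced in the proof of Proposition \ref{prop:hexagon-bounds}. This is the set of points of $X$ having at least two distinct distance paths to $\partial X$. It is a finite graph made of geodesic arcs (equidistant sets between two boundary lifts are geodesics in $\Hyp$, being perpendicular bisectors of the common perpendicular). It is a spine of $X$: $X$ deformation retracts onto $G_X$ along distance paths. Concretely, each point $x\notin G_X$ has a unique distance path to $\partial X$, and we push $x$ backwards along the extension of this path, away from $\partial X$, until it hits $G_X$. This nearest-point-type retraction $\pi:X\to G_X$ is a homotopy equivalence, so every closed geodesic $\gamma$ is freely homotopic to $\sigma=\pi(\gamma)$, a closed path in $G_X$ (after tightening in the graph, which only shortens it).

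The key step is a local Lipschitz estimate: $\pi$ expands lengths by at most $\cosh(W)$. I would use Fermi coordinates $(s,r)$ relative to a lift $\widetilde\beta$ of a boundary geodesic, where $s$ is arclength along $\widetilde\beta$ and $r$ is distance to it. The metric is then $ds_{\Hyp}^2=dr^2+\cosh^2(r)\,ds^2$. Inside a Voronoi cell of $\widetilde\beta$, the map $\pi$ sends $(s,r)$ to $(s,\rho(s))$, where $\rho(s)$ is the distance from $\widetilde\beta$ to the cell boundary along the perpendicular at $s$. So $\pi$ kills the $r$-component and moves along the perpendicular. A tangent vector $(\dot s,\dot r)$ at a point has length at least $\cosh(r)|\dot s|\ge|\dot s|$. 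Its image is tangent to the cut-locus edge, with length $\sqrt{\rho'^2+\cosh^2\rho}\,|\dot s|$. One must bound this by $\cosh(W)$ times $|\dot s|$ using $\rho\le W$.

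This bound is where I expect the main obstacle: the slope term $\rho'$ has to be controlled. The cleaner route is to avoid differentiating and instead compare lengths directly. A geodesic edge $e$ of the cut locus lies over an interval $I$ of $\widetilde\beta$ of length $|I|$. The orthogonal projection of $e$ to $\widetilde\beta$ contracts lengths by at least the factor $1/\cosh(\max r)$, and $e$ is a geodesic segment of the bisector. I would use the hyperbolic trigonometry of the quadrilateral with base $I$, two perpendicular sides, and top $e$ (the formula relating a geodesic segment to its projection at heights $\le W$). From it I would derive $\ell(e)\le\cosh(W)\,|I|$. This needs a small Lambert-type computation showing the worst case is an edge parallel at constant height $W$, for which the ratio is exactly below $\cosh W$.

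Then the argument assembles as follows. The path $\gamma$ decomposes into subarcs each lying in a single Voronoi cell. For each subarc, its $\pi$-image has length at most $\cosh(W)$ times the length of its projection to the corresponding boundary geodesic, and that projection is at most the length of the subarc itself, since the projection to a geodesic is $1$-Lipschitz. Summing over subarcs gives $\ell_{G_X}(\sigma)\le\cosh(W)\,\ell_X(\gamma)$. Finiteness and embeddedness of $G_X$ follow from compactness and the dual hexagon decomposition already used in Proposition \ref{prop:hexagon-bounds}. As noted, the argument uses nothing about $\gamma$ being geodesic beyond rectifiability.
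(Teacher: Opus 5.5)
Your construction is sound: after tightening inside each cell, the $\pi$-image of $\gamma$ is exactly the path in $G_X$ that the paper builds. The gap is the length estimate in your last paragraph.

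\textbf{Where the estimate fails.} The inequality $\ell(e)\le\cosh(W)\,|I|$ holds for a \emph{whole} edge. You apply it to the $\pi$-image of an arbitrary subarc, and it does not localize to sub-intervals of $I$. In Fermi coordinates about $\widetilde\beta$, take an edge on the bisector at distance $d$ from $\widetilde\beta$, with foot at $s_0$. It is the graph $\tanh\rho(s)=\tanh d\,\cosh(s-s_0)$, and a short computation gives $\sqrt{\rho'(s)^2+\cosh^2\rho(s)}=\cosh^2\rho(s)/\cosh d$. This local stretch is $\cosh d$ at the foot, but it is $\cosh^2 W/\cosh d>\cosh W$ at a vertex with $\rho=W$. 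Such vertices exist when $W=w(X)$, because $d(\cdot,\partial X)$ attains its maximum at a vertex of $G_X$.

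Now take a subarc of $\gamma$ that cuts the corner of the cell at such a vertex $v$, running from a point of one edge near $v$ to a point of the adjacent edge near $v$. Its $\pi$-image consists of the two short edge pieces through $v$, of length strictly more than $\cosh(W)$ times the length of its projection. So your claimed bound "image $\le\cosh(W)\times$ projection" is false.

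This is not a marginal case. Every passage of $\gamma$ through a cell enters through one edge and leaves through another, covering only part of each. The whole-edge bound is an average in which the cheap part near the foot compensates for the expensive part near the vertices. In particular $\pi$ is not $\cosh(W)$-Lipschitz, which is exactly the slope obstacle you anticipated. In the corner example the inequality you actually want, image $\le\cosh(W)\times$ length of the subarc, still holds, but for a reason your projection argument cannot see: the angle of the cell at $v$.

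\textbf{The missing idea.} Partial edges need a separate estimate that compares to the distance to a \emph{lateral side}, not to the projection on $\widetilde\beta$. The paper cuts each cell further along the distance paths from the vertices of $G_X$; these are precisely the segments your $\pi$ collapses to vertices. This gives quadrilaterals with two right angles on $\partial X$.

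\begin{itemize}
\item A piece of $\gamma$ crossing such a quadrilateral from lateral side to lateral side is handled as you propose. Its length is at least the base, which is the common perpendicular of the two lateral lines, and the whole-edge bound applies.
\item A piece running from a point $p$ on the lateral side at a vertex $v$ to a point $q$ on the edge is replaced by $[v,q]$. The right triangle with hypotenuse $[v,q]$ and a leg perpendicular to the lateral line gives $d(v,q)\le d(p,q)/\sin\alpha$, where $\alpha$ is the angle at $v$.
\end{itemize}

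The key extra input is the Voronoi angle bound $\sin\alpha>1/\cosh W$, obtained from a Lambert quadrilateral at the vertex. With this vertex-angle estimate used for the first and last piece of each cell crossing, and your whole-edge bound for the full crossings in between, your argument goes through.
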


Before proving the proposition, we construct $G_X$. Let $\widetilde X\subset \Hyp^2$ be the universal cover of $X$, and let $\mathcal L$ be the collection of geodesic lines in $\partial\widetilde X$, that is, the lifts of the boundary components of $X$. For each $L\in\mathcal L$, define
\[
 V_L
 =
 \bigl\{x\in\widetilde X:
 d_{\widetilde X}(x,L)\leq d_{\widetilde X}(x,L')
 \text{ for every }L'\in\mathcal L\bigr\}.
\]
The family $\{V_L\}_{L\in\mathcal L}$ is a locally finite, deck-invariant decomposition of $\widetilde X$. Its quotient in $X$ is the \emph{boundary Voronoi decomposition}.

Equivalently, consider the distance function
\[
 x\longmapsto d_X(x,\partial X).
\]
A minimizing geodesic segment from $x$ to $\partial X$ meets the boundary orthogonally. The cut locus
\[
 G_X
 =
 \bigl\{x\in\operatorname{int}(X):
 x\text{ admits at least two distinct minimizing segments to }\partial X
 \bigr\}
\]
is precisely the interior one-skeleton of the boundary Voronoi decomposition. It is a finite embedded geodesic graph. A point in the interior of an edge of $G_X$ has exactly two minimizing segments to the boundary, whereas a vertex has at least three. Vertices of valence greater than three are allowed (see Figure \ref{fig:Voronoi}). 

\begin{figure}[ht]
\centering
\begin{minipage}{0.45\textwidth}
\centering
\includegraphics[width=\linewidth]{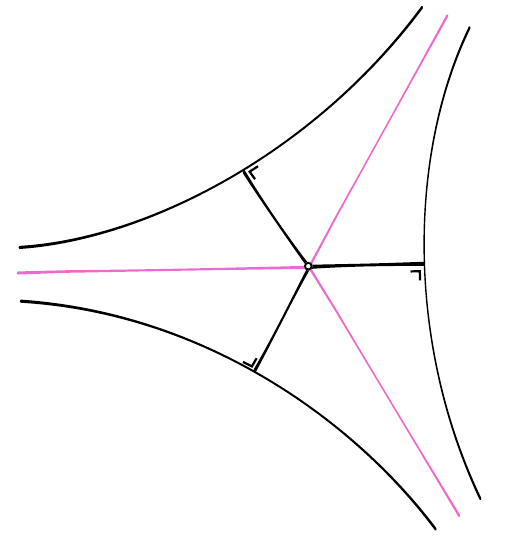}
\end{minipage}
\hfill
\begin{minipage}{0.45\textwidth}
\centering
\includegraphics[width=\linewidth]{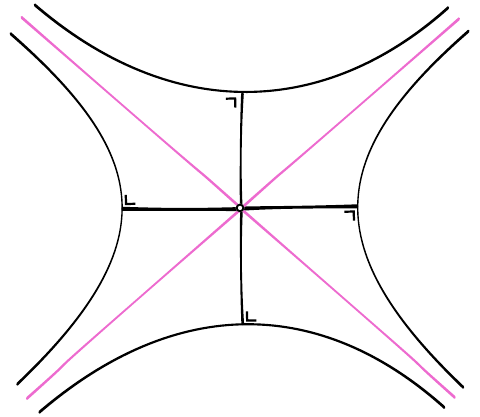}
\end{minipage}
\caption{Local models for the cut locus of the boundary.}\label{fig:Voronoi}
\end{figure}

In Figure \ref{fig:Voronoi}, the exterior segments represent pieces of $\partial X$, the prongs correspond to distance paths from the vertex to $\partial X$ and the pink central set is $G_X$. 

For every vertex of $G_X$, draw all of its minimizing segments to $\partial X$. Together with the edges of $G_X$ and the boundary, these segments cut $X$ into hyperbolic quadrilaterals (see Figure \ref{fig:VoronoiQuad}). More precisely, let $\widetilde e$ be an edge of the lifted Voronoi graph, with endpoints $\widetilde v$ and $\widetilde w$, and let $V_L$ be one of the two Voronoi regions adjacent to $\widetilde e$. If $q_v,q_w\in L$ are the feet of the perpendiculars from $\widetilde v,\widetilde w$ to $L$, then
\[
 \widetilde e,
 \qquad [\widetilde v,q_v],
 \qquad [q_v,q_w]\subset L,
 \qquad [q_w,\widetilde w]
\]
bound one such quadrilateral. Thus every quadrilateral has one side contained in $\partial X$, the opposite side contained in $G_X$, and two lateral sides which are minimizing distance paths. Its two angles on $\partial X$ are right angles, and its lateral sides have length at most $W$.

\begin{figure}[h]
 \centering
\begin{overpic}[width=0.5\linewidth,
%grid,
tics=10]{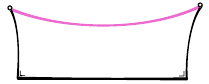}
\put(4,17){$a$}
\put(93,17){$b$}
\put(48,5){$\partial X$}
\put(48,32){$G_X$}
\put(10,28){$\alpha$}
\put(86,27){$\beta$}
\end{overpic}
\caption{A Voronoi quadrilateral.}
 \label{fig:VoronoiQuad}
\end{figure}

The following estimate controls the two angles adjacent to the side in $G_X$.

\begin{lemma}[Voronoi angle bound]\label{lem:voronoi-angle}
Set
\[
 \theta_0:=\arcsin\!\left(\frac{1}{\cosh W}\right).
\]
Every angle $\alpha$ of a Voronoi quadrilateral adjacent to its side in $G_X$ satisfies
\[
 \theta_0<\alpha<\frac{\pi}{2}.
\]
In particular,
\[
 \frac{1}{\sin\alpha}<\cosh W.
\]
\end{lemma}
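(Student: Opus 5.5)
We work in the universal cover, with one lifted Voronoi quadrilateral $Q$. It has vertices $q_v,q_w$ on the boundary line $L$, both with right angles, and vertices $\widetilde v,\widetilde w$ on the Voronoi graph, with angles $\alpha,\beta$. The lateral sides $[\widetilde v,q_v]$ and $[\widetilde w,q_w]$ have lengths $s_v,s_w\le W$, because they are minimizing distance paths and $w(X)\le W$. Write $b=\ell([q_v,q_w])$ and $e=\ell(\widetilde e)$.

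\textbf{Upper bound $\alpha<\pi/2$.} Since $\widetilde e\subset\partial V_L$, every point $x\in\widetilde e$ satisfies $d(x,L)=d(x,L')$ for the other adjacent line $L'$. The edge is therefore a piece of the equidistant set (bisector) of the two ultraparallel geodesics $L$ and $L'$. That bisector is itself a geodesic which has $L$ and $L'$ on opposite sides.

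Parametrize $\widetilde e$ from $\widetilde v$ to $\widetilde w$ and consider $f(x)=d(x,L)$. Along any geodesic the function $\sinh d(\cdot,L)$ is of the form $A\cosh s+B\sinh s$. Its derivative at $\widetilde v$ in the direction of $\widetilde e$ is $-\cos\alpha$. If $\alpha\ge\pi/2$, the distance to $L$ would be nondecreasing as we move into the edge, and then strictly convex-increasing. We then argue at the vertex $\widetilde v$, where at least three Voronoi regions meet. The quadrilateral angles at $\widetilde v$, taken over all regions around $\widetilde v$, sum to $2\pi$, and there are at least three of them. Each angle is formed by a perpendicular and a bisector edge. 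By symmetry across the bisector, the two angles on either side of an edge at a common endpoint are equal. So the angles come in equal pairs, one pair per edge $(\alpha_1,\alpha_1),(\alpha_2,\alpha_2),\dots$, with $2\sum\alpha_i=2\pi$ over at least three edges. Moreover, each wedge between two consecutive perpendiculars at $\widetilde v$ is bisected by an edge, and consecutive wedges fill $2\pi$. Since a region has angle sum $<2\pi$ in the hyperbolic quadrilateral, the total angle of $Q$ gives $\alpha+\beta<\pi$.

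The cleanest route to $\alpha<\pi/2$ is the pair/wedge picture at $\widetilde v$. Let $k\ge3$ be the valence and $\theta_1,\dots,\theta_k$ the angles between consecutive minimizing segments, so that $\sum\theta_i=2\pi$. Each edge bisects one wedge, so $\alpha=\theta_i/2$. The wedge cannot reach $\pi$, since two minimizing segments at angle $\ge\pi$ would allow a shortcut to one boundary line. Hence $\alpha<\pi/2$. I will double-check that the strictness follows from the fact that a boundary at distance $\le s$ in two opposite directions makes $\widetilde v$ strictly farther than some point nearby.

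\textbf{Lower bound, which I expect to be the main obstacle.} Drop the common perpendicular $\mu$ from the bisector geodesic to $L$. The trirectangle (Lambert) formed by $\mu$, a piece of $L$, the perpendicular from $\widetilde v$, and the bisector has acute angle exactly $\alpha$ at $\widetilde v$. The Lambert identity gives $\sin\alpha=\cosh(\ell(\mu))/\cosh(s_v)$. Since $\cosh\ell(\mu)\ge1$ and $s_v\le W$, we get $\sin\alpha\ge 1/\cosh W$.

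For strictness, note that $\ell(\mu)>0$: the bisector is disjoint from $L$, because the lines $L,L'$ are disjoint lifts of boundary components at positive distance, so $\cosh\ell(\mu)>1$. The same argument, with $\alpha>\theta_0$ in place of the bound above, settles strictness. The delicate point is identifying $Q$'s angle with the Lambert angle, which requires $\widetilde v$ to lie on the far side of $\mu$'s foot. I would check that this follows from $\alpha<\pi/2$, using the direction of the increase of $f$. The final statement $1/\sin\alpha<\cosh W$ is then immediate.
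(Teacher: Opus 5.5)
Your lower bound is the paper's argument. With $m$ the midpoint of the common perpendicular of $L,L'$ and $u=d(m,L)>0$, the trirectangle $\widetilde v\,m\,p\,q_v$ gives $\sin\alpha=\cosh u/\cosh s_v>1/\cosh W$. You also correctly isolate the one nontrivial point: the Voronoi angle at $\widetilde v$ must be the angle of this trirectangle, i.e.\ the edge must leave $\widetilde v$ heading toward $m$, and this is equivalent to $\alpha<\pi/2$.

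The gap is your proof of $\alpha<\pi/2$. The bisection $\alpha=\theta_i/2$ is fine. But the claim that a wedge $\theta_i\geq\pi$ ``would allow a shortcut'' has no content. The two minimizing segments bounding the wedge end on different lines, so there is nothing to shorten; at $m$ itself, two minimizing segments make angle exactly $\pi$. Nor does $\sum\theta_i=2\pi$ with $k\geq 3$ prevent one $\theta_i$ from exceeding $\pi$. Your other remarks, $\alpha+\beta<\pi$ and the monotonicity of $d(\cdot,L)$ along the edge, do not exclude $\alpha\geq\pi/2$ either.

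The paper does not fill this in. It asserts that the angle at $x$ of the Lambert quadrilateral $x\,m_i\,p_i\,q_i$ \emph{is} the Voronoi angle, then reads off $\alpha<\pi/2$ from Gauss--Bonnet. That is exactly the identification you flagged. It really requires $m$ to lie on the edge, i.e.\ that no third boundary line is closer to $m$ than $L$, and nothing local forces this.

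Here is a local model with only three lines. Take $d(m,L)=d(m,L')=3$, and let $L''$ be perpendicular to the bisector at distance $2.5$ from $m$ on one side of the common perpendicular. Then $m\in V_{L''}$, and the edge $V_L\cap V_{L'}$ starts at a vertex about $0.77$ beyond $m$ on the other side and runs away from $m$. At that vertex all three minimizing directions lie in a half-plane. The Voronoi angle there is $\pi-\varphi\approx 130^\circ$, where $\varphi$ is the Lambert angle.

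What survives in every case is $\alpha>\theta_0$ and $1/\sin\alpha<\cosh W$. If $\alpha\geq\pi/2$ the first is trivial. If $\alpha>\pi/2$, the same trirectangle has angle $\pi-\alpha$ at $\widetilde v$, so $\sin\alpha=\cosh u/\cosh s_v$ still holds. So the acute-angle part cannot be obtained from the local geometry at $\widetilde v$, and your argument, like the paper's, is incomplete at precisely that step.
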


\begin{proof}
Consider one of the two vertices of the side in $G_X$, lift it to a point $x\in\widetilde X$, and let
\[
 L_1,\ldots,L_k\in\mathcal L,
 \qquad k\geq 3,
\]
be the boundary lines realizing the distance from $x$ to $\partial\widetilde X$, listed in cyclic order around $x$. Consecutive lines are completed by their common perpendiculars to a right-angled hexagon when $k=3$, and to the analogous right-angled polygon when $k>3$. The cut-locus edges issuing from $x$ lie on the corresponding perpendicular bisectors.

Fix a consecutive pair $L_i,L_{i+1}$ determining the edge under consideration. Let $c_i$ be their common perpendicular, let $m_i$ be its midpoint, let $p_i=c_i\cap L_i$, and let $q_i$ be the foot of the perpendicular from $x$ to $L_i$. The quadrilateral
\[
 x\,m_i\,p_i\,q_i
\]
is a Lambert quadrilateral: its angles at $m_i,p_i,q_i$ are right angles, and its angle at $x$ is precisely the angle $\alpha$ of the corresponding Voronoi quadrilateral. Gauss--Bonnet gives
\[
 \area(xm_ip_iq_i)
 =2\pi-\left(\frac{3\pi}{2}+\alpha\right)
 =\frac{\pi}{2}-\alpha.
\]
Its area is positive, and hence $\alpha<\pi/2$.

For the lower bound, set
\[
 r=d_{\widetilde X}(x,q_i),
 \qquad
 u=d_{\widetilde X}(m_i,p_i).
\]
The standard identities for a Lambert quadrilateral give
\[
 \sin\alpha=\frac{\cosh u}{\cosh r}.
\]
Since $u>0$ and $r\leq W$, it follows that
\[
 \sin\alpha>\frac{1}{\cosh r}
 \geq\frac{1}{\cosh W}.
\]
Geometrically, the limiting equality is the ideal right-triangle configuration. This proves the result.
\end{proof}

We next record two elementary estimates for arcs crossing a Voronoi quadrilateral. The first is illustrated in Figure \ref{fig:VoronoiArc01}.

\begin{lemma}[Crossing a quadrilateral]\label{lem:quadrilateral-crossing} Let $Q$ be a hyperbolic quadrilateral with two consecutive right angles. Let $t_0$ be the side between the right angles and $t'$ the opposite side. Suppose that the remaining side lengths satisfy $a,b\leq W$ and that the two remaining angles lie in $]0,\pi/2[$. Then
\[
 t'\leq \cosh(W)\,t_0.
\]
In particular, if an arc of length $t$ joins the two lateral sides of $Q$, then
\[
 t'\leq \cosh(W)\,t.
\]
\end{lemma}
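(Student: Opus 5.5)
The plan is to reduce to an explicit trigonometric inequality, use the acuteness of the two top angles to rule out the degenerate configurations, and then deduce the claim about arcs by monotonicity.

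\textbf{Step 1: closed form for $t'$.} Work in Fermi coordinates $(s,r)$ along the geodesic line $\beta$ containing $t_0$, in which the metric is $dr^2+\cosh^2 r\,ds^2$. The two lateral sides are perpendicular to $\beta$, so they are the segments $\{s=0,\ 0\le r\le a\}$ and $\{s=t_0,\ 0\le r\le b\}$. The side $t'$ joins $(0,a)$ to $(t_0,b)$, and the hyperboloid model gives
\[
\cosh t'=\cosh a\cosh b\cosh t_0-\sinh a\sinh b=\cosh(a-b)+\cosh a\cosh b\,(\cosh t_0-1).
\]
The term $\cosh(a-b)$ shows that $t'\le \cosh(W)\,t_0$ cannot hold in general: if $a\neq b$ and $t_0\to 0$, then $t'\to|a-b|>0$. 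So the acute-angle hypothesis is essential, and I will use it next.

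\textbf{Step 2: what acuteness gives.} Let $\alpha$ be the angle at the vertex $(0,a)$. The angle is exactly $\pi/2$ in the Lambert configuration, where $\tanh a=\tanh b\cosh t_0$. Since $\alpha$ decreases as $a$ grows with $b,t_0$ fixed, $\alpha<\pi/2$ should correspond to $\tanh a<\tanh b\cosh t_0$. By symmetry the other acute angle gives $\tanh b<\tanh a\cosh t_0$. Hence $|a-b|$ is controlled by $t_0$: from $\tanh a<\tanh b\cosh t_0$ one gets
\[
\sinh(a-b)=\cosh a\cosh b\,(\tanh a-\tanh b)<\cosh a\cosh b\,\tanh b\,(\cosh t_0-1),
\]
and symmetrically for $b-a$.

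\textbf{Step 3: the target inequality.} It then suffices to show
\[
\cosh(a-b)+\cosh a\cosh b\,(\cosh t_0-1)\le\cosh\bigl(\cosh(W)\,t_0\bigr).
\]
I will do this in two steps. First, Step 2 gives $\cosh(a-b)-1\lesssim \tfrac12\sinh^2(a-b)=O\bigl((\cosh t_0-1)^2\bigr)$, which is of higher order than the main term. Second, set $c=\cosh W\geq \sqrt{\cosh a\cosh b}$ (as $a,b\le W$), and use $\cosh(cx)-1\ge c^2(\cosh x-1)$ for $c\ge1$ (convexity in the power series). Then the linear term $\cosh a\cosh b(\cosh t_0-1)$ is absorbed by $\cosh(ct_0)-1$, and the higher-order terms should be absorbed by the slack in that convexity estimate. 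The symmetric case $a=b$ is a sanity check: there $\sinh(t'/2)=\cosh a\sinh(t_0/2)$, and $\sinh(cx)\ge c\sinh x$ directly gives $t'\le\cosh(a)\,t_0$.

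\textbf{Step 4: arcs between the lateral sides.} Any arc joining the lateral sides has length $t\ge t_0$, because the nearest-point projection to $\beta$ is $1$-Lipschitz and maps the arc onto a path in $\beta$ covering $[0,t_0]$. So $t'\le\cosh(W)\,t_0\le\cosh(W)\,t$.

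\textbf{Main obstacle.} The hardest step is Step 3: making the absorption of the $\cosh(a-b)$ term rigorous and uniform in $t_0$, since the bound $\cosh(a-b)-1=O((\cosh t_0-1)^2)$ is only small compared with the main term when $t_0$ is small. If the direct algebra is too messy, my fallback is geometric. Parametrise $t'$ by arclength; it is a graph over $s$ because the angles are acute. Since $r$ is convex along geodesics, $r\le\max(a,b)\le W$ on $t'$. Write $ds/d\tau=\cos\psi/\cosh r$, and try to bound the vertical angle $\psi$ using the angle bounds of the type in Lemma~\ref{lem:voronoi-angle}.
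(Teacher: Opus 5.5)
\textbf{There is a genuine gap in Step 3.} Steps 1 and 4 are fine. Step 3, however, is the entire content of the lemma, and you only sketch it. Moreover, the mechanism you name does not work.

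Take $b=W$ and $a=W-\delta$, with $\delta$ as large as acuteness permits. For small $t_0$ this is $\delta\approx\sinh W\cosh W\,t_0^2/2$. Then $\cosh(a-b)-1\approx\sinh^2W\cosh^2W\,t_0^4/8$. The slack in $\cosh(ct_0)-1\geq c^2(\cosh t_0-1)$ is only $(c^4-c^2)t_0^4/24+O(t_0^6)=\sinh^2W\cosh^2W\,t_0^4/24+O(t_0^6)$, which is a factor $3$ too small to leading order. The inequality survives only because of the first-order term $(\cosh^2W-\cosh a\cosh b)(\cosh t_0-1)$, and you discard that term when you ``absorb the linear term''. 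For $t_0$ not small you give no argument at all, and the Fermi-coordinate fallback is a direction, not a proof.

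A smaller point about Step 2. The angle at the top of the side $a$ is right when $\tanh b=\tanh a\cosh t_0$, not when $\tanh a=\tanh b\cosh t_0$. Since that angle decreases as $a$ grows, it is acute iff $\tanh b<\tanh a\cosh t_0$. So you have the two conditions swapped. Because both angles are assumed acute, the pair of inequalities you end up with is nonetheless correct.

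\textbf{The missing idea} is to compare with the symmetric quadrilateral $a=b=W$, rather than directly with $\cosh(\cosh(W)t_0)$. Your ``sanity check'' computation then finishes the proof.

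The paper does this by first variation. The partial derivatives $\partial t'/\partial a$ and $\partial t'/\partial b$ are the cosines of the two top angles, hence positive in the acute regime. So $t'$ can only increase if you raise the shorter lateral side to the length of the longer one, and then raise both to $W$; both angles stay acute along the way. This gives $t'\leq t'(W,W)$, that is, $\sinh(t'/2)\leq\cosh W\sinh(t_0/2)$.

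Your algebraic route can also be closed. Since $\cosh t'(W,W)=1+\cosh^2W(\cosh t_0-1)$, it suffices to show
\[
\cosh(b-a)-1\leq\bigl(\cosh^2W-\cosh a\cosh b\bigr)(\cosh t_0-1).
\]
Assume $a\leq b$ and use two bounds:
\begin{enumerate}[(i)]
\item $\cosh^2W-\cosh a\cosh b\geq\cosh b(\cosh b-\cosh a)$;
\item $\cosh t_0-1>(\tanh b-\tanh a)/\tanh a$, which is acuteness of the angle over the shorter side.
\end{enumerate}
With half-angle formulas these reduce the inequality to $\sinh a\leq 2\sinh\frac{a+b}{2}\cosh\frac{b-a}{2}$. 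This holds trivially, and it holds for every $t_0$, not just small ones.
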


\begin{proof}
The complete geodesics containing the two lateral sides are perpendicular to the complete geodesic containing the side of length $t_0$. Thus that side is their common perpendicular, and every arc joining the lateral sides has length at least $t_0$.

It remains to estimate $t'$. Using a standard quadrilateral formula (see, e.g., \cite[p.38]{BuserBook}), we have
\begin{equation}\label{eq:carrier-fermi}
 \cosh t'
 =\cosh a\cosh b\cosh t_0-\sinh a\sinh b.
\end{equation}
We maximize $t'$ under the constraints $a,b\leq W$ and that the two remaining angles are nonobtuse. The result will be, by a standard calculus computation, that $t'$ is maximal when $a=b=W$. We provide an argument for completeness.

Put
\[
 C=\cosh t_0,
 \qquad
 u=\tanh a,
 \qquad
 v=\tanh b.
\]
Differentiating \eqref{eq:carrier-fermi} gives
\[
 \sinh(t')\frac{\partial t'}{\partial a}
 =\cosh a\cosh b\,(uC-v),
 \qquad
 \sinh(t')\frac{\partial t'}{\partial b}
 =\cosh a\cosh b\,(vC-u).
\]
By first variation, these derivatives are the cosines of the two remaining angles. The admissible region is therefore characterized by
\[
 uC\geq v,
 \qquad
 vC\geq u,
\]
with strict inequalities for the quadrilaterals occurring in $X$. Hence an interior extremum is impossible. On a boundary component, say $v=Cu$, one has
\[
 \cosh t'
 =C\sqrt{\frac{1-u^2}{1-C^2u^2}},
\]
which is strictly increasing in $u$. Thus an extremum on the closure of the admissible region must have $a=W$ or $b=W$. If, for example, $a=W$, then $\partial t'/\partial b\geq0$ throughout the remaining admissible interval, so the maximum is reached at $b=W$. Therefore the largest possible value of $t'$ is obtained when $a=b=W$.

In this symmetric case, \eqref{eq:carrier-fermi} becomes
\[
 \cosh t'
 =\cosh^2(W)\cosh t_0-\sinh^2(W)
 =1+2\cosh^2(W)\sinh^2\!\left(\frac{t_0}{2}\right).
\]
Therefore
\[
 \sinh\!\left(\frac{t'}{2}\right)
 =\cosh(W)\sinh\!\left(\frac{t_0}{2}\right).
\]
Since $\sinh(\lambda x)\geq\lambda\sinh x$ for $\lambda\geq1$ and $x\geq0$,
we obtain
\[
 t'\leq\cosh(W)t_0\leq\cosh(W)t.
\]
\end{proof}

\begin{figure}[h]
 \centering
\begin{overpic}[width=0.5\linewidth,
%grid,
tics=10]{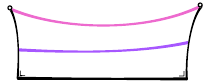}
\put(4,18){$a$}
\put(93,18){$b$}
\put(48,5){$t_0$}
\put(48,32){$t'$}
\put(48,18){$t$}
\end{overpic}
\caption{An arc joining the two lateral sides is replaced by the side in $G_X$.}
 \label{fig:VoronoiArc01}
\end{figure}

The second estimate concerns an arc with one endpoint on a lateral side and the other on the side contained in $G_X$, illustrated in Figure \ref{fig:VoronoiArc02}.

\begin{lemma}[Reaching the carrier graph]\label{lem:quadrilateral-upper-side}
Let $v$ be a vertex of the side of a Voronoi quadrilateral contained in $G_X$, let $q$ lie on that side, and let $p$ lie on the lateral side issuing from $v$. Write
\[
 t'=d_X(v,q),
 \qquad
 t=d_X(p,q),
\]
and let $\alpha$ be the angle at $v$. Then
\[
 \frac{t'}{t}\leq\frac{1}{\sin\alpha}.
\]
In particular, by Lemma~\ref{lem:voronoi-angle},
\[
 t'<\cosh(W)\,t.
\]
\end{lemma}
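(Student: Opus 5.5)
The plan is to work entirely inside the triangle with vertices $v$, $p$, $q$ and apply the hyperbolic law of sines. The Voronoi quadrilateral has all its angles in $]0,\pi[$: two right angles on $\partial X$, and two angles in $]\theta_0,\pi/2[$ by Lemma~\ref{lem:voronoi-angle}. It is therefore convex, so the geodesic segment $[p,q]$ lies in the quadrilateral, and we may compute in a lift to $\Hyp^2$. I first dispose of the degenerate cases. If $q=v$, then $t'=0$ and there is nothing to prove. If $p=v$, then $t=t'$, and the inequality holds because $\sin\alpha\leq 1$. Otherwise $v,p,q$ span a genuine geodesic triangle. Its angle at $v$ is exactly $\alpha$, because $[v,p]$ runs along the lateral side and $[v,q]$ along the side in $G_X$. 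In this triangle, $t'$ is the side opposite $p$ and $t$ is the side opposite $v$.

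The hyperbolic law of sines gives
\[
\frac{\sinh t'}{\sin \angle p}=\frac{\sinh t}{\sin\alpha},
\qquad\text{hence}\qquad
\sinh t'\leq \frac{\sinh t}{\sin\alpha},
\]
since $\sin\angle p\leq 1$. Set $\lambda:=1/\sin\alpha\geq 1$. The elementary inequality $\sinh(\lambda x)\geq\lambda\sinh x$ for $\lambda\geq1$ and $x\geq0$, already used in Lemma~\ref{lem:quadrilateral-crossing}, gives $\sinh t'\leq\lambda\sinh t\leq\sinh(\lambda t)$. Since $\sinh$ is increasing, this yields $t'\leq\lambda t$, that is, $t'/t\leq 1/\sin\alpha$.

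The final claim follows by combining this with Lemma~\ref{lem:voronoi-angle}, which gives $1/\sin\alpha<\cosh W$; hence $t'<\cosh(W)\,t$, with strictness whenever $t>0$. The only point needing any care is the convexity of the quadrilateral and the identification of the triangle's angle at $v$ with $\alpha$. Both follow from the angle bounds above, so I do not expect a serious obstacle.
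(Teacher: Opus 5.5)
Your proof is correct and is essentially the paper's argument. The paper drops the perpendicular from $q$ to the complete geodesic through the lateral side, uses the right-triangle identity $\sinh(t')\sin\alpha=\sinh(s)$ with $s\leq t$, and concludes by the monotonicity of $\sinh(x)/x$; your law of sines in the triangle $vpq$ with $\sin\angle p\leq 1$ is the same computation repackaged (indeed $\sinh s=\sinh t\,\sin\angle p$), and your handling of the degenerate cases, with strictness only when $t>0$, is slightly more careful than the paper's.
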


\begin{proof}
Let $p_0$ be the foot of the perpendicular from $q$ to the complete geodesic containing the lateral side, and set $s=d_X(p_0,q)$. Then $s\leq t$. The right triangle with vertices $v,p_0,q$ has hypotenuse $t'$, side opposite $\alpha$ of length $s$, and
\[
 \sinh(t')\sin\alpha=\sinh(s).
\]
Thus
\[
 \frac{\sinh(t')}{\sinh(s)}=\frac{1}{\sin\alpha}.
\]
Since $x\mapsto\sinh(x)/x$ is increasing on $(0,\infty)$ and $t'\geq s$, we
have
\[
 \frac{t'}{s}\leq\frac{1}{\sin\alpha}.
\]
As $s\leq t$, the desired estimate follows. The final inequality is an immediate consequence of Lemma~\ref{lem:voronoi-angle}.
\end{proof}

\begin{figure}[h]
 \centering
\begin{overpic}[width=0.5\linewidth,
%grid,
tics=10]{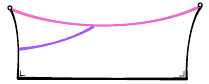}
\put(4,16){$p$}
\put(1,38){$v$}
\put(26,33){$t'$}
\put(46,32){$q$}
\put(28,15){$t$}
\put(10,28){$\alpha$}
\end{overpic}
\caption{An arc from a lateral side to the side in $G_X$ is replaced by a subarc of $G_X$.}
 \label{fig:VoronoiArc02}
\end{figure}

We can now prove the proposition. 

\begin{proof}[Proof of Proposition~\ref{prop:carrier-graph}] Choose a representative of $\gamma$ contained in the interior of $X$ and in general position with respect to the quadrilateral decomposition. Cutting it at its intersections with the sides of the quadrilaterals expresses it as a finite collection of arcs. Each arc is replaced inside its quadrilateral according to the following rules.

\begin{enumerate}[(1)]
\item If its endpoints lie on the two opposite lateral sides, replace it by  the entire side in $G_X$. By Lemma~\ref{lem:quadrilateral-crossing}, this multiplies its length by at most $\cosh W$.

\item If one endpoint lies on a lateral side and the other on the side in $G_X$, collapse the lateral endpoint to the adjacent vertex of $G_X$ and replace the arc by the corresponding subarc of $G_X$. By Lemma~\ref{lem:quadrilateral-upper-side}, this multiplies its length by at most $\cosh W$.

\item If both endpoints lie on the side in $G_X$, replace the arc by the geodesic subarc between them. Its length is no greater than the length of the original arc.

\item If both endpoints lie on the same lateral side, collapse the arc to the  adjacent vertex of $G_X$.
\end{enumerate}

The replacements agree along lateral sides: every point of a lateral side is sent to its endpoint in $G_X$. They therefore concatenate to a closed path $\sigma\subset G_X$. Each replacement is performed inside a single quadrilateral, so $\sigma$ is freely homotopic to $\gamma$. Summing the four local estimates gives
\[
 \ell_{G_X}(\sigma)
 \leq\cosh(W)\,\ell_X(\gamma),
\]
as required.
\end{proof}

\begin{remark}
The strict angle inequality in Lemma~\ref{lem:voronoi-angle} gives a strict local estimate in the second replacement case. For the counting arguments below, the uniform weak bound with constant $\cosh(W)$ is the useful formulation.
\end{remark}

\subsubsection{A combinatorial input: long tangle graphs}

We now isolate the combinatorial estimate that will be applied to the carrier
graphs constructed above. Let $G$ be a finite connected metric graph, meaning with a finite number of edges with length in $\R^{>0}$. Length on $G$, as before, will be denoted $\ell_G$ or simply $\ell$ if the graph at hand is implicit. 

The \emph{girth} of $G$ is the length of its shortest embedded cycle. We define its
\emph{tangle length} by
\[
 \operatorname{tl}(G)
 =
 \inf\bigl\{
 \ell_G(H):
 H\subset G \text{ is connected and } \beta_1(H)\geq 2
 \bigr\},
\]
with the convention that the infimum of the empty set is $+\infty$. Thus, below the tangle length, a connected subgraph contains at most one independent cycle.

The volume entropy of $G$ is
\[
 h_G
 =
 \limsup_{R\to+\infty}
 \frac{1}{R}
 \log\length_{\widetilde G}\bigl(B_{\widetilde G}(x,R)\bigr),
\]
where $\widetilde G$ is the universal cover of $G$, equipped with the lifted path metric $\ell_{\widetilde G}$. As usual, this does not depend on the choice of $x\in\widetilde G$.

\begin{lemma}[Long tangle graphs]\label{lem:long-tangle-graphs} Let $G$ be a finite connected metric graph with $N$ edges. Suppose that its girth is at least $b>0$ and that
\[
 \operatorname{tl}(G)\geq T>0.
\]
Then
\[
 h_G
 \leq
 \frac{4(N+1)}{T}
 \log\left[
 (6N+10)
 \left(
 1+\frac{T}{4(N+1)b}
 \right)
 \right].
\]
Consequently, for any family in which the number of edges is uniformly bounded above and the girth is uniformly bounded below, we have
\[
 h_G
 =
 O\left(\frac{\log(2+T)}{T}\right)
\]
as the tangle-length lower bound $T$ tends to infinity.
\end{lemma}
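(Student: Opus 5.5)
We bound the growth of the universal cover by counting reduced (non-backtracking) edge paths. The plan has four steps: compare ball length to path counts, split long paths into windows of length comparable to $T$, count windows using the tangle hypothesis, and multiply.

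\textbf{Step 1 (reduction to counting).} Fix a vertex $x$. Every point of $B_{\widetilde G}(x,R)$ lies on an edge reached by a reduced edge path from $x$ of length at most $R$ plus one edge. Hence $\ell_{\widetilde G}(B(x,R))$ is at most (maximal edge length) times the number $P(R)$ of reduced paths of length at most $R+\max \ell(e)$. It therefore suffices to show
\[
P(R)\leq C\,K^{R/\tau+1},
\qquad \tau := \frac{T}{4(N+1)},
\qquad K:=(6N+10)\left(1+\frac{T}{4(N+1)b}\right).
\]
One subtlety is that edges may be longer than $\tau$. We handle this by subdividing edges into pieces of length at most $\tau$, or by noting that long edges only help.

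\textbf{Step 2 (windows).} Cut each reduced path into consecutive windows, each of length roughly $\tau$. In $G$, each window projects to a connected subgraph $H$ of length less than $T$, since a window has length $\tau<T$. By the definition of $\operatorname{tl}(G)$, we get $\beta_1(H)\leq 1$. So $H$ is a tree or a unicyclic graph, and its cycle, if present, has length at least $b$.

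\textbf{Step 3 (counting windows).} This is the key step. A reduced path confined to a unicyclic subgraph is determined by a small amount of data:
\begin{itemize}
\item its starting edge and direction, at most $2N$ choices;
\item its entry and exit into the cycle, essentially determined by choices of branches at most $N+1$ times;
\item the number of turns around the cycle, which is at most $\tau/b+1$ because the cycle has length at least $b$.
\end{itemize}
A reduced path in a tree or unicyclic graph cannot backtrack, so it is a geodesic in a tree plus a winding. The number of branch decisions is bounded because the window uses at most $N$ distinct edges before it would be forced to repeat an edge, and repeating an edge means winding. This gives at most $(6N+10)(1+\tau/b)$ window types. The constant $6N+10$ would come from carefully tallying the start/end edge choices, which I would not try to optimize. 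The factor $4(N+1)$ in $\tau$ is there so that a window traverses at most $N+1$ edge-length units of the subdivision, making the combinatorics uniform.

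\textbf{Step 4 (conclusion).} Multiply over the $R/\tau+1$ windows to get $P(R)\leq C K^{R/\tau+1}$. Taking $\frac1R\log$ and letting $R\to\infty$ yields $h_G\leq \frac{4(N+1)}{T}\log K$, which is the stated bound. The asymptotic $O(\log(2+T)/T)$ is then immediate for bounded $N$ and girth bounded below.

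The main obstacle is Step 3: obtaining a uniform count of reduced paths of length $\tau$ inside a unicyclic subgraph of an arbitrary graph with $N$ edges, including the bookkeeping of where consecutive windows are glued. I would first try encoding each window by its first edge, its last edge, and a winding number, and then verify that this data determines the window uniquely.
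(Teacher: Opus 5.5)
Your architecture (a local count at scale $\tau=T/(4(N+1))$ driven by the tangle-length hypothesis, then multiplication over about $R/\tau$ windows) matches the paper's. However, Step 3, which you flag as the main obstacle, has a genuine gap, not just bookkeeping.

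In Step 2 you apply the tangle-length hypothesis to each window separately, which only uses $\tau<T$. In Step 3 you count reduced paths inside one fixed unicyclic subgraph $H$. But windows leaving the same point have different images, and nothing in your argument puts them all in a single graph of rank at most one. With only $\tau<T$ the claimed count is false. Let $G$ have two vertices joined by $k$ edges of length $\lambda$, so $N=k$, girth $b=2\lambda$ and $\operatorname{tl}(G)=3\lambda$. For $\tau=5\lambda/2$, every window from a vertex has image of rank one, yet there are $k(k-1)^2$ of them, far more than $(6N+10)(1+\tau/b)$ for large $k$. The same example shows the code (first edge, last edge, winding number) is not injective, since it forgets the middle edge. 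Your explanation of the factor $4(N+1)$ (``a window traverses at most $N+1$ edge-length units'') is not what makes it work.

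The missing idea is to control the whole $\tau$-ball around the starting point, not individual windows. Subdivide only the edge containing the current point $p$; this is where the $N+1$ comes from. Then $B_G(p,\tau)$ meets each edge in at most two intervals of length at most $\tau$, so $\ell(B_G(p,\tau))\le 2(N+1)\tau\le T/2<T$, hence $\beta_1(B_G(p,\tau))\le 1$. This ball has at most $m=2(N+1)$ edges and girth at least $b$, and its inclusion is $\pi_1$-injective. So all windows from a lift $y$ of $p$ are geodesics in the universal cover of this one rank-at-most-one graph: a line with finite trees attached with period at least $b$. There a geodesic from $y$ is determined by its endpoint, the sphere of radius $\tau$ meets each edge lift at most once, and only about $2+2\tau/b$ periods are within reach. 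This gives $\#S(y,\tau)\le(3m+4)(1+\tau/b)=(6N+10)(1+\tau/b)$. So the constant comes from the number of edges of the ball, not from tallying start and end edges. Multiplying in a starting-edge factor per window would spoil it, and multiplying branch choices would make it exponential in $N$.

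Finally, work metrically in the tree $\widetilde G$ rather than with reduced edge paths. Each point of $S_{\widetilde G}(x,(k+1)\tau)$ has a unique ancestor on $S_{\widetilde G}(x,k\tau)$, and the descendants of an ancestor $y$ lie in $S(y,\tau)$. Since the local bound is uniform in the centre, $\#S_{\widetilde G}(x,k\tau)\le M^k$ with $M=(6N+10)(1+\tau/b)$. This settles the gluing issue. It also avoids the problems with vertex-aligned windows: variable window lengths, possibly up to about $2R/\tau$ windows, edges longer than $\tau$, and a global subdivision that would increase $N$ and hence change $\tau$.
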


\begin{proof}
We first record a local estimate. Let $H$ be a connected metric graph with at most $m$ edges, first Betti number at most one, and girth at least $b$. We claim that, for every $y\in\widetilde H$ and every $R\geq0$,
\begin{equation}\label{eq:unicyclic-sphere-growth}
 \#S_{\widetilde H}(y,R)
 \leq
 (3m+4)\left(1+\frac{R}{b}\right),
\end{equation}
where $S_{\widetilde H}(y,R)$ denotes the metric sphere of radius $R$ centered at $y$.

If $H$ is a tree, then $\widetilde H=H$. After subdividing the edge containing $y$, if necessary, the point $y$ is a vertex and the resulting graph has at most $m+1$ edges. The sphere meets each edge in at most one point, and hence has cardinality at most $m+1$.

Suppose now that $\beta_1(H)=1$, and let $C$ be its unique embedded cycle. The full preimage of $C$ in $\widetilde H$ is a geodesic line $A$. The remaining edges form finite trees attached periodically along $A$, with period $\ell(C)\geq b$. Let $a\in A$ be the gate of $y$ to $A$. The component of $\widetilde H\setminus A$ containing $y$ contributes at most $m+1$ points to the sphere. Every other attached tree which meets the sphere is rooted in the segment $B_A(a,R)$, of length $2R$. This segment meets at most
\[
 2+\frac{2R}{\ell(C)}
 \leq
 2+\frac{2R}{b}
\]
fundamental periods. In each period there are at most $m$ edges outside the
axis, and the sphere meets each such edge in at most one point. The axis
itself contributes at most two further points. Therefore
\[
 \#S_{\widetilde H}(y,R)
 \leq
 (m+1)
 +m\left(2+\frac{2R}{b}\right)
 +2
 =
 3m+3+\frac{2mR}{b},
\]
which implies \eqref{eq:unicyclic-sphere-growth}.

We now apply this estimate locally in $\widetilde G$. Set
\[
 r=\frac{T}{4(N+1)}.
\]
Fix $x\in\widetilde G$, and let $p$ be its projection to $G$. Subdivide the edge containing $p$, if necessary, so that $p$ is a vertex. Now the resulting graph has at most $N+1$ edges.

For $0<s\leq r$, the projection of $B_{\widetilde G}(x,s)$ is the metric ball $B_G(p,s)$. Its intersection with each edge of the subdivided graph consists of at most two intervals, adjacent to the endpoints of that edge, each of length at most $s$. It follows that
\[
 \length\bigl(B_G(p,s)\bigr)
 \leq
 2(N+1)s
 \leq
 \frac{T}{2}.
\]
The definition of tangle length therefore gives
\[
 \beta_1\bigl(B_G(p,s)\bigr)\leq1.
\]
After cutting at the endpoints of the intervals above, the graph $B_G(p,s)$ has at most
\[
 2(N+1)
\]
edges, and every cycle in it has length at least $b$.

The inclusion $B_G(p,s)\hookrightarrow G$ is injective on fundamental groups. Consequently, the component of its full preimage in $\widetilde G$ which contains $x$ is its universal cover, and it contains $B_{\widetilde G}(x,s)$. Applying \eqref{eq:unicyclic-sphere-growth} with $m=2(N+1)$ gives
\[
 \#S_{\widetilde G}(x,s)
 \leq
 (6N+10)\left(1+\frac{r}{b}\right)
 =:M
\]
for every $x\in\widetilde G$ and every $0<s\leq r$.

The universal cover $\widetilde G$ is a tree. Hence every point of $S_{\widetilde G}(x,(k+1)r)$ has a unique ancestor on $S_{\widetilde G}(x,kr)$, and each such ancestor has at most $M$ descendants at distance $r$. By induction this means that
\[
 \#S_{\widetilde G}(x,kr)\leq M^k
\]
for every integer $k\geq0$. Allowing a final step of length at most $r$ gives
\[
 \#S_{\widetilde G}(x,t)
 \leq
 M^{\lceil t/r\rceil}
\]
for every $t\geq0$. Integrating the cardinalities of the metric spheres, we obtain
\[
 \length\bigl(B_{\widetilde G}(x,R)\bigr)
 \leq
 R\,M^{\lceil R/r\rceil}.
\]
It follows that
\[
 h_G\leq\frac{\log M}{r}.
\]
Substituting the definitions of $M$ and $r$ proves the stated estimate.
\end{proof}

We note that the volume entropy of graphs was studied by Lim \cite{Lim} and, like in the current paper, used to study entropies of manifolds by Balacheff and Merlin \cite{BalacheffMerlin}. 

\subsubsection{The high systole entropy bound}

We now return to the carrier graph $G_X$ associated to a compact connected hyperbolic surface $X$ with nonempty geodesic boundary. As before, $G_X$ is equipped with the metric induced from $X$. Recall that every vertex has valence at least three. Whenever we consider subgraphs, we will systematically suppress its vertices of valence two. This will not change their entropy, girth, or tangle length.

The first observation relates the tangle length of $G_X$ to the systole of
$X$.

\begin{lemma}[Tangle length and systole]\label{lem:carrier-tangle-systole}
If
\[
 \sys(X)\geq S,
\]
then
\[
 \operatorname{tl}(G_X)\geq \frac{S}{2}.
\]
\end{lemma}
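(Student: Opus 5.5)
Suppose $H\subset G_X$ is a connected subgraph with $\beta_1(H)\geq 2$. We will show $\ell_{G_X}(H)\geq S/2$, and the infimum in the definition of $\operatorname{tl}(G_X)$ then gives the claim. Since $\beta_1(H)\geq 2$, the graph $H$ contains a connected subgraph of one of three types, whose total length is at most $\ell(H)$: two embedded cycles meeting in a point, two disjoint embedded cycles joined by an embedded arc, or a theta graph. In each case we can read off closed paths in $H$ that traverse each edge at most twice. There are two cycles $c_1,c_2$ and the concatenation $c_1 * c_2$, which in the barbell case uses the connecting arc twice. Since $G_X\subset X$, each of these closed paths is a closed curve in $X$ whose $X$-length equals its $G_X$-length, so its total length is at most $2\ell(H)$.

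The core step is to show that at least one of these closed paths is freely homotopic to a non-peripheral essential closed curve of $X$. Then its geodesic representative has length at least $\sys(X)\geq S$, which forces $2\ell(H)\geq S$.

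The key input is that $G_X$ is a deformation retract (spine) of $X$. The Voronoi quadrilaterals retract onto $G_X$ along the distance paths, and each complementary region of $G_X$ in $X$ is a half-open annulus around a single boundary component. Hence the inclusion $G_X\hookrightarrow X$ is a homotopy equivalence, in particular $\pi_1$-injective, and the restriction to $H$ is also $\pi_1$-injective. So $\pi_1(H)$ embeds as a free subgroup of rank at least $2$ in $\pi_1(X)$. In particular $c_1$ and $c_2$ are essential.

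It remains to rule out the case where both $c_1$ and $c_2$ are peripheral. If $c_1$ is not peripheral, we are done. If both are peripheral, the subgroup they generate has rank $2$. It cannot be generated by conjugates of a single boundary element, since then it would be cyclic, because the peripheral subgroup is maximal cyclic. Then $c_1 * c_2$ (or $c_1 * c_2^{-1}$) is a product of two non-commuting peripheral elements. We must argue that such a product is not itself peripheral; this is the step I expect to be the main obstacle.

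I would try an embedded-cycle argument in the spine. An embedded cycle in $G_X$ bounds on one side a complementary annulus of $G_X$ only when it is exactly the boundary of a face, that is, when it is peripheral. For the theta and figure-eight configurations, at most two of the three embedded cycles can be face boundaries: otherwise the three faces would cover a sphere-like region and $X$ would be a pair of pants. In the pants case one instead uses a figure-eight curve, which is still non-peripheral with length at most $2\ell(H)$ and is at least the interior systole. In the barbell case the path $c_1*\delta*c_2*\delta^{-1}$ is non-peripheral unless $X$ is a pair of pants, for the same reason.

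In all cases we obtain a non-peripheral closed curve of length at most $2\ell(H)$, whence $\ell(H)\geq S/2$.
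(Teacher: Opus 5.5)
Your argument is correct in outline and follows the paper's skeleton: pass to a rank-two core $K$, take two embedded cycles, and when both are peripheral use a product of them of length at most $2\ell(H)$. You settle the key step by a different mechanism, though.

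The paper argues topologically. If both cycles are peripheral, a regular neighborhood of their union is a pair of pants, and the product taken with the figure-eight relative orientation is non-peripheral in $X$. So a single choice works whether or not $X$ is itself a pair of pants.

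You instead use that $G_X$ is a spine whose complementary regions are the peripheral half-annuli. This turns peripherality into the combinatorial condition of being a face walk. If $c_1$, $c_2$ and the third product were all face walks, they would use both sides of every edge of $K$. That forces $G_X=K$ and makes $X$ a pair of pants, a case you treat separately with the figure-eight. Your route avoids having to justify that the neighborhood is a pair of pants rather than a one-holed torus, and that its figure-eight stays non-peripheral in $X$. The cost is a case split.

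One thing to tighten. Your criterion is stated for embedded cycles, but in the figure-eight and barbell configurations the third curve ($c_1c_2^{\pm1}$, resp.\ $c_1\delta c_2^{\pm1}\delta^{-1}$) is not embedded, so ``three embedded cycles'' does not apply there. State instead three facts:
\begin{enumerate}[(i)]
\item each free homotopy class in a graph has a unique cyclically reduced closed walk;
\item face walks of $G_X$ are cyclically reduced, since all vertices have valence at least $3$;
\item hence a cyclically reduced closed walk is peripheral exactly when it is a power of a face walk.
\end{enumerate}
Since each directed edge lies on exactly one face walk, your covering argument then works in all three configurations.

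Your barbell count $\ell(c_1)+\ell(c_2)+2\ell(\delta)\leq 2\ell(H)$ is also more careful than the paper's intermediate bound $\ell(\alpha)+\ell(\beta)$, which omits the connecting arc; the paper's conclusion is unaffected.
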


\begin{proof}
Let $H\subset G_X$ be a connected subgraph with $\beta_1(H)\geq2$. Removing hanging trees and then passing to a minimal connected rank-two subgraph gives a core $K\subset H$ such that
\[
 \beta_1(K)=2
 \qquad\text{and}\qquad
 \length(K)\leq\length(H).
\]
(Throughout the proof, $\ell(\cdot)$ is used for $\ell_{G_X}(\cdot)$.)

After suppressing vertices of valence two, $K$ is a figure-eight, a theta graph, or a barbell graph. In each case there are two embedded cycles $\alpha,\beta\subset K$ which freely generate $\pi_1(K)$ and satisfy
\[
 \ell(\alpha)+\ell(\beta)
 \leq 2\,\length(K).
\]
If one of $\alpha$ and $\beta$ is non-peripheral in $X$, then its geodesic representative has length at least $S$, and therefore
\[
 S\leq 2\,\length(K).
\]
Suppose that both are peripheral. A regular neighborhood of their union is a pair of pants. With one relative choice of orientations, the product of the two based loops represents the third boundary component of this pair of pants; with the other choice, it represents a non-peripheral class (a so-called figure-$8$ curve). Thus one of $\alpha\beta$ and $\alpha\beta^{-1}$ is non-peripheral in $X$. After cyclic reduction, its length is at most
\[
 \ell(\alpha)+\ell(\beta)
 \leq 2\,\length(K).
\]
Its geodesic representative consequently has length at least $S$, so again
\[
 S\leq2\,\length(K)\leq2\,\length(H).
\]
Taking the infimum over all such subgraphs $H$ proves the result.
\end{proof}

We shall also need the following elementary information about the size and girth of the carrier graph.

\begin{lemma}[Size and girth of the carrier graph]
\label{lem:carrier-size-girth}
The graph $G_X$ has at most $-3\chi$ edges. Moreover, if every component of $\partial X$ has length at least $b>0$ and $\sys(X)\geq S$, then
\[
 \operatorname{girth}(G_X)\geq\min\{b,S\}.
\]
\end{lemma}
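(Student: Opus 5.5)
The edge count follows from an Euler characteristic computation. For the girth, we compare each embedded cycle of $G_X$ with the closed geodesic in its free homotopy class.

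\emph{Edge count.} The graph $G_X$ is a deformation retract of $X$: flow each point away from $\partial X$ along its minimizing segment, or equivalently collapse each Voronoi quadrilateral onto its side in $G_X$. Hence $\chi(G_X)=\chi$, so $V-E=\chi$, where $V$ and $E$ are the numbers of vertices and edges. Every vertex has valence at least three, so $2E\geq 3V$. Substituting $V=E+\chi$ gives $2E\geq 3E+3\chi$, that is, $E\leq -3\chi$.

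One caveat concerns components of $G_X$ without vertices. This can only happen if $X$ is an annulus, which is excluded by hyperbolicity with $\chi<0$. Deformation retraction also forces $G_X$ to be connected. Each edge of $G_X$ is shared by two Voronoi regions, which may correspond to the same boundary line in the quotient, but this does not affect the count.

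\emph{Girth.} Let $c\subset G_X$ be an embedded cycle. Since $G_X$ is a deformation retract of $X$, the inclusion is $\pi_1$-injective, so $c$ is a homotopically nontrivial closed curve in $X$.

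If $c$ is non-peripheral, its free homotopy class contains a closed geodesic, possibly a multiple of a primitive one. The geodesic is the length minimizer, so $\ell_{G_X}(c)\geq \ell_X(c)\geq \sys(X)\geq S$.

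If $c$ is peripheral, it is freely homotopic to $\delta^k$ for a boundary component $\delta$ and some $k\neq0$. The geodesic representative in this class is $\delta^k$ itself, so $\ell_{G_X}(c)\geq |k|\,\ell(\delta)\geq b$.

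In both cases $\ell(c)\geq\min\{b,S\}$, which gives the girth bound.

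\emph{Main obstacle.} The hardest part is justifying that $G_X$ is a deformation retract of $X$, and that it is a finite graph all of whose vertices have valence at least three. This is the standard structure of the cut locus of $\partial X$ and was already asserted when $G_X$ was constructed. I would take it from the quadrilateral decomposition described before Lemma~\ref{lem:voronoi-angle}: each quadrilateral retracts onto its side in $G_X$ by sliding along the minimizing segments, and these retractions agree on the lateral sides.
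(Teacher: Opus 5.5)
Your proof is correct and follows the paper's argument essentially verbatim. For the edge count, $V-E=\chi$ comes from the deformation retraction and $2E\geq 3V$ from the valence condition, giving $E\leq-3\chi$; for the girth, you compare each embedded cycle with the geodesic in its free homotopy class, which is a boundary component (length $\geq b$) if peripheral and has length $\geq S$ otherwise (your extra care about powers $\delta^k$ is harmless but unnecessary, since an embedded cycle in a graph is primitive in $\pi_1$, so $k=\pm1$).
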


\begin{proof}
Let $V$ and $E$ be the numbers of vertices and edges of $G_X$. Since $G_X$ is a deformation retract of $X$,
\[
 V-E=\chi.
\]
Every vertex has valence at least three, and hence
\[
 2E=\sum_{v\in V(G_X)}\deg(v)\geq3V.
\]
Substituting $V=E+\chi$ gives $E\leq-3\chi$.

Now let $c\subset G_X$ be an embedded cycle. Since $G_X\hookrightarrow X$ is a homotopy equivalence, $c$ represents an essential free homotopy class in $X$. If this class is peripheral, its geodesic representative is a boundary component and has length at least $b$. Otherwise its geodesic representative has length at least $S$. Geodesic tightening does not increase length, so
\[
 \ell_{G_X}(c)\geq\min\{b,S\}.
\]
Taking the infimum over embedded cycles proves the girth estimate.
\end{proof}

Combining these estimates with Lemma~\ref{lem:long-tangle-graphs} gives the
following explicit entropy bound.

\begin{theorem}[High systole entropy bound]
\label{thm:high-systole-entropy}
Let $X$ be a compact connected hyperbolic surface with nonempty geodesic boundary and Euler characteristic $\chi<0$. Suppose that
\[
 w(X)\leq W,
 \qquad
 \sys(X)\geq S>0,
\]
and that every boundary component of $X$ has length at least $b>0$. Then
\[
 h_X
 \leq
 \frac{8(1-3\chi)\cosh W}{S}
 \log\left[
 (10-18\chi)
 \left(
 1+
 \frac{S}{8(1-3\chi)\min\{b,S\}}
 \right)
 \right].
\]
\end{theorem}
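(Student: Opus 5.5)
The plan is to combine Proposition~\ref{prop:carrier-graph} with Lemma~\ref{lem:long-tangle-graphs}, feeding in the bounds of Lemmas~\ref{lem:carrier-tangle-systole} and~\ref{lem:carrier-size-girth}. First we compare entropies. By Proposition~\ref{prop:carrier-graph}, every closed geodesic $\gamma$ on $X$ with $\ell_X(\gamma)\leq L$ is freely homotopic to a closed path $\sigma\subset G_X$ with $\ell_{G_X}(\sigma)\leq \cosh(W)L$. Since $G_X\hookrightarrow X$ is a homotopy equivalence, distinct free homotopy classes on $X$ give distinct conjugacy classes in $\pi_1(G_X)$. Hence $N_X(L)$ is bounded by the number of conjugacy classes in $\pi_1(G_X)$ represented by closed paths of $G_X$-length at most $\cosh(W)L$.

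Next we bound that number by orbit growth in $\widetilde G_X$. Each such class has a cyclically reduced representative, namely a loop in $G_X$ of length at most $\cosh(W)L$. Lift it to a path in $\widetilde G_X$ starting in a fixed compact fundamental domain $D$. The endpoint then lies in $B_{\widetilde G_X}(D,\cosh(W)L)$, and distinct classes give distinct deck translates of $D$, up to a bounded multiplicity. Because the translates of $D$ have lengths bounded below and are disjoint, the number of translates meeting the ball is controlled by $\length(B_{\widetilde G_X}(x,\cosh(W)L+\diam D))$. This gives
\[
\limsup_{L\to\infty}\frac{\log N_X(L)}{L}\leq \cosh(W)\,h_{G_X},
\]
so $h_X\leq\cosh(W)\,h_{G_X}$. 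This comparison between the growth of conjugacy classes and the volume growth of the tree is the one step I expect to need care. The difficulty is the multiplicity and subexponential factors, which vanish in the limit. A cleaner alternative is to note that $h_X$ equals the critical exponent of $\pi_1(X)$ acting on $\Hyp^2$, and that this action is quasi-isometrically dominated by the action on $\widetilde G_X$ with multiplicative constant $\cosh W$. Since the carrier-graph replacement works for any rectifiable curve, it applies to lifted paths between orbit points, up to additive constants.

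Then we apply Lemma~\ref{lem:long-tangle-graphs} to $G=G_X$. By Lemma~\ref{lem:carrier-size-girth}, the graph has $N\leq -3\chi$ edges and girth at least $\min\{b,S\}$. By Lemma~\ref{lem:carrier-tangle-systole}, we may take $T=S/2$. The bound in Lemma~\ref{lem:long-tangle-graphs} is increasing in $N$ and decreasing in the girth lower bound, so we may substitute $N=-3\chi$, giving $4(N+1)=4(1-3\chi)$ and $6N+10=10-18\chi$. This yields
\[
h_{G_X}\leq \frac{8(1-3\chi)}{S}\log\left[(10-18\chi)\left(1+\frac{S}{8(1-3\chi)\min\{b,S\}}\right)\right].
\]
Multiplying by $\cosh W$ gives exactly the stated bound.
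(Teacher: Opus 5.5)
Your proposal is correct and is essentially the paper's proof. It uses the same two steps: first the comparison $h_X\leq\cosh(W)\,h_{G_X}$ via Proposition~\ref{prop:carrier-graph}, then Lemma~\ref{lem:long-tangle-graphs} with $N=-3\chi$ edges, girth at least $\min\{b,S\}$ and $T=S/2$. The only difference is that you spell out the orbit-counting behind the comparison step, which the paper simply invokes as the closed-path characterization of the entropy of a finite metric graph.
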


\begin{proof}
By Proposition~\ref{prop:carrier-graph}, every free homotopy class represented by a closed geodesic of length at most $L$ on $X$ is represented on $G_X$ by a closed path of length at most $\cosh(W)L$. Since $G_X$ is a deformation retract of $X$, the two spaces have the same free homotopy classes. Using the equivalent closed-path characterization of the entropy of a finite metric graph, we obtain
\[
 h_X\leq\cosh(W)h_{G_X}.
\]

Set
\[
 N=-3\chi,
 \qquad
 a=\min\{b,S\},
 \qquad
 T=\frac{S}{2}.
\]
By Lemmas~\ref{lem:carrier-tangle-systole} and \ref{lem:carrier-size-girth}, the graph $G_X$ has at most $N$ edges, girth at least $a$, and tangle length at least $T$. Lemma~\ref{lem:long-tangle-graphs} therefore gives
\[
 h_{G_X}
 \leq
 \frac{8(N+1)}{S}
 \log\left[
 (6N+10)
 \left(
 1+\frac{S}{8(N+1)a}
 \right)
 \right].
\]
Multiplying by $\cosh(W)$ and substituting $N=-3\chi$ proves
the result.
\end{proof}

We observe that this result has an immediate consequence (which could certainly have been proven in a more direct way, or deduced from existing results in the literature):

\begin{corollary}\label{cor:zero}
For any $X\in \M_g$, there exists an infinite sequence of $Y_i\subset X$ such that 
\[
\lim_{i \to \infty} h_{Y_i}=0
\]
\end{corollary}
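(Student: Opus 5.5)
We use Theorem \ref{thm:high-systole-entropy}: to make the entropy small we want subsurfaces $Y_i$ of fixed topology, with bounded width and boundary lengths bounded below, but with interior systole $\sys(Y_i)\to\infty$. The simplest choice is to take all $Y_i$ to be pairs of pants. A pair of pants has no interior simple closed geodesic, and its interior systole is the length of its shortest figure-$8$ geodesic. This length tends to infinity when the boundary lengths tend to infinity.

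The case of pants needs care, because the width of a pair of pants with long boundary is small rather than large, and we need it bounded. The proof of Theorem \ref{thm:high-systole-entropy} only uses three inputs: Proposition \ref{prop:carrier-graph} with width $W$, the girth bound, and the tangle bound of Lemma \ref{lem:carrier-tangle-systole}. Each of these holds for pants with $\chi=-1$. So the plan is as follows.

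First, fix a non-separating simple closed geodesic $\alpha_0$ on $X$ and a simple closed curve $\beta$ with $i(\alpha_0,\beta)\ge 1$. Set $\alpha_i=T_\beta^{\,i}(\alpha_0)$. These are pairwise distinct non-separating simple closed geodesics, and by the Dehn twist estimate already quoted in the proof of Lemma \ref{lem:dominating}, $\ell(\alpha_i)\to\infty$. For each $i$ choose a pair of pants $Y_i\subset X$ having $\alpha_i$ as one boundary component. Alternatively, choose a pants decomposition containing $\alpha_i$ and let $Y_i$ be a pants adjacent to $\alpha_i$. Distinct $\alpha_i$ give distinct subsurfaces, since a subsurface determines its boundary. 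This produces an infinite sequence.

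Next, we check the hypotheses for $Y_i$. Every boundary component has length at least $\sys(X)=:b>0$. The width satisfies $w(Y_i)\le\diam(X)=:W$, as in the proof of Theorem \ref{thm:nf}. We have $\chi(Y_i)=-1$. The interior systole $S_i$ of $Y_i$ is the length of a figure-$8$ geodesic, which winds around two boundary components; we need $S_i\to\infty$.

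This is the main obstacle: if the other two boundary components of $Y_i$ stay short, then only one of the three figure-$8$ curves is forced to be long. To handle it, I would first try to choose $Y_i$ so that all three boundary curves are long. For instance, take $T_\beta^{\,i}$ applied to a fixed pants $P_0$, with $\beta$ filling together with $\partial P_0$, so that every boundary curve grows linearly in $i$. In a pair of pants, each figure-$8$ geodesic is longer than each of the two boundary geodesics it encircles, by standard trigonometry. Hence $S_i\ge\min\ell(\partial Y_i)\to\infty$. This route is cleaner than tracking a single long boundary.

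With $S_i\to\infty$ and $b$, $W$, $\chi$ fixed, the bound of Theorem \ref{thm:high-systole-entropy} is of the form $O\!\left(\frac{\log S_i}{S_i}\right)$. Therefore $h_{Y_i}\to 0$.
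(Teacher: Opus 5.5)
Your proof is correct. Its skeleton is the paper's: pants in $X$, boundary lengths at least $\sys(X)$, width at most $\diam(X)$, interior systole tending to infinity, then Theorem~\ref{thm:high-systole-entropy} with $\chi=-1$. The difference is in how you obtain $\sys(Y_i)\to\infty$.

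The paper takes an \emph{arbitrary} sequence of distinct pants and simply asserts $\sys(Y_i)\to\infty$. Implicitly this is a finiteness argument. The systole of a pants is realized by a figure-$8$ geodesic, which is also a closed geodesic of $X$ and determines the pants it fills. Since $X$ has only finitely many closed geodesics of length at most $L$, only finitely many pants have systole at most $L$.

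You instead construct a specific sequence $T_\beta^{\,i}(P_0)$ whose boundary lengths all grow linearly in $i$, and compare figure-$8$ lengths with boundary lengths trigonometrically. Your route is more explicit and gives a linear rate for $S_i$. It needs the standard existence of $\beta$ meeting every component of $\partial P_0$, plus the pants length identity. The paper's route is softer but proves more: \emph{every} sequence of distinct pants has entropies tending to $0$. That stronger fact is reused later, in the remark on discreteness of the pants spectrum.

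Two of your worries are unfounded. First, small width is harmless: only an upper bound $w\le W$ enters, and Theorem~\ref{thm:high-systole-entropy} applies to pants directly. Second, the identity $\cosh(\ell_8/2)=2\cosh(\ell_1/2)\cosh(\ell_2/2)+\cosh(\ell_3/2)$ shows that every figure-$8$ geodesic is longer than all three boundary curves, including the one it does not encircle. So your first construction, with a single long boundary $\alpha_i$, already works.
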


\begin{proof}
Any $X$ contains infinitely many pairs of pants. Their boundary curves are of length lower bounded below by $\sys(X)$, and any sequence of distinct $Y_i$ satisfies 
\[
\lim_{i \to \infty} \sys(Y_i)= +\infty.
\]
Moreover, their widths are all bounded by the diameter of $X$. Hence, the result follows from directly from the estimate in Theorem \ref{thm:high-systole-entropy}.
\end{proof}

The other accumulation points in $\E(X)$ will be identified in the sequel. 

\begin{remark}\label{rem:high-systole-asymptotics}
For fixed topological type and fixed $b>0$, Theorem~\ref{thm:high-systole-entropy} gives
\[
 h_X
 =
 O_{\chi,b}\left(
 \cosh(W)\frac{\log(2+S)}{S}
 \right).
\]
In particular, for a family of homeomorphic surfaces whose widths remain bounded and whose systoles tend to infinity, the entropies tend to zero. More generally, the same conclusion holds whenever
\[
 \cosh(W)\frac{\log(2+S)}{S}\longrightarrow0.
\]
If, in addition, the boundary lengths are bounded below by a fixed positive multiple of $S$, then the logarithmic factor in the inequality of Theorem \ref{thm:high-systole-entropy} remains bounded and
\[
 h_X=O_{\chi}\left(\frac{\cosh(W)}{S}\right),
\]
where the implicit constant also depends on that fixed multiple.
\end{remark}

The lower bound on the boundary lengths can itself be expressed in terms of the width. Define
\[
 \omega(t)
 =
 \arcsinh\left(\frac{1}{\sinh t}\right).
\]
The collar lemma gives an embedded half-collar of width
\[
 \omega\left(\frac{\ell_X(\beta)}{2}\right)
\]
around every boundary component $\beta$. The inner half of this half-collar lies in the Voronoi region of $\beta$. Indeed, if
\[
 0\leq r\leq
 \frac12\omega\left(\frac{\ell_X(\beta)}{2}\right),
\]
and $x$ is the point at distance $r$ from $\beta$ along a perpendicular geodesic, then for every other boundary component $\beta'$,
\[
 d_X(x,\beta')
 \geq
 d_X(\beta,\beta')-r
 \geq
 \omega\left(\frac{\ell_X(\beta)}{2}\right)-r
 \geq r.
\]
Thus
\[
 d_X(x,\partial X)=r,
\]
and consequently
\[
 W\geq
 \frac12\omega\left(\frac{\ell_X(\beta)}{2}\right).
\]
Since $\omega$ is decreasing and satisfies $\omega\circ\omega=\operatorname{id}$, it follows that
\[
 \ell_X(\beta)
 \geq
 2\,\omega(2W)
 =
 2\arcsinh\left(\frac{1}{\sinh(2W)}\right).
\]
Set
\[
 a_W(S)
 =
 \min\left\{
 S,
 2\arcsinh\left(\frac{1}{\sinh(2W)}\right)
 \right\}.
\]
Applying Theorem~\ref{thm:high-systole-entropy} with this lower bound yields the following consequence.

\begin{corollary}\label{cor:high-systole-width-entropy}
Let $X$ be a hyperbolic surface with \[
 w(X)\leq W
 \qquad\text{and}\qquad
 \sys(X)\geq S>0.
\]
Then
\[
 h_X
 \leq
 \frac{8(1-3\chi)\cosh W}{S}
 \log\left[
 (10-18\chi)
 \left(
 1+
 \frac{S}{8(1-3\chi)a_W(S)}
 \right)
 \right].
\]
In particular, for fixed $W$ and $\chi$,
\[
 h_X
 =
 O_{W,\chi}\left(\frac{\log(2+S)}{S}\right)
 \qquad\text{as }S\to+\infty.
\]
\end{corollary}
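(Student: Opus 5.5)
The argument is a direct substitution into Theorem \ref{thm:high-systole-entropy}; the only new ingredient is a lower bound on boundary lengths in terms of the width, which the preceding paragraph supplies.

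First I fix a boundary component $\beta$ of $X$. By the collar argument just above the statement, the point at distance $r\le \tfrac12\omega(\ell_X(\beta)/2)$ from $\beta$ along a perpendicular is at distance exactly $r$ from $\partial X$. One point deserves an explicit check: the half-collar around $\beta$ is embedded, so this point is also not closer to $\beta$ itself through another sheet, and distinct boundary components are at distance at least $\omega(\ell(\beta)/2)$ from $\beta$. Hence $W\ge w(X)\ge \tfrac12\omega(\ell_X(\beta)/2)$. Since $\omega$ is a decreasing involution, this gives
\[
\ell_X(\beta)\ge 2\,\omega(2W)=2\arcsinh\!\left(\frac{1}{\sinh(2W)}\right).
\]

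Next I apply Theorem \ref{thm:high-systole-entropy} with $b=2\arcsinh(1/\sinh(2W))$. Then $\min\{b,S\}=a_W(S)$, and the stated inequality follows verbatim. The hypotheses are compactness, connectedness, nonempty geodesic boundary and $\chi<0$, which are implicit in the setting of this subsection.

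For the asymptotic statement, I fix $W$ and $\chi$. For $S\ge b$ we have $a_W(S)=b$, a constant depending only on $W$. The argument of the logarithm is then at most $(10-18\chi)\,(1+S/(8b))$, so the logarithm is $O_{W,\chi}(\log(2+S))$. Multiplying by the prefactor $8(1-3\chi)\cosh W/S$ gives $h_X=O_{W,\chi}(\log(2+S)/S)$.

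I do not expect a real obstacle. The only delicate step is the Voronoi/collar claim that points in the inner half-collar realize their distance to $\partial X$ at $\beta$; this follows from the triangle inequality together with the collar lemma's disjointness of half-collars from other boundary components.
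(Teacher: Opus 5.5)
Your proposal is correct and follows the paper's argument exactly. The paper derives $\ell_X(\beta)\geq 2\arcsinh(1/\sinh(2W))$ from the half-collar observation in the paragraph preceding the statement, then substitutes $b=2\arcsinh(1/\sinh(2W))$ into Theorem \ref{thm:high-systole-entropy}, so that $\min\{b,S\}=a_W(S)$. Your additional checks (that a point in the inner half-collar realizes its distance to $\partial X$ on $\beta$, and that $a_W(S)=b$ is constant once $S\geq b$, giving the asymptotic) are valid details the paper leaves implicit.
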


Since
\[
 \omega(2W)\sim2e^{-2W}
 \qquad\text{as }W\to+\infty,
\]
the last estimate has the rough form
\[
 h_X
 =
 O_{\chi}\left(
 \frac{\cosh W}{S}
 \bigl(1+W+\log(2+S)\bigr)
 \right).
\]

\subsection{Converging sequences of entropies}

We now show that attaching pieces with increasingly large systole does not change the entropy in the limit, provided their topology and width remain controlled. More precisely, let $Y$ be a fixed compact hyperbolic surface with geodesic boundary, not necessarily connected, and let
\[
 \C=\beta_1\sqcup\cdots\sqcup\beta_q\subset\partial Y
\]
be a fixed nonempty union of boundary components. For each $i$, let $Z_i$ be a compact hyperbolic surface of a fixed topological type, again not necessarily connected, with $q$ distinguished boundary components isometric to the curves
$\beta_j$, and let
\[
 Y_i=Y\cup_{\C} Z_i
\]
be the surface obtained by gluing the corresponding boundary components. We write
\[
 S_i=\sys(Z_i)
\]
If $Z_i$ is disconnected, then its systole is the minimum of the systoles of its connected components. As above, the entropy of a disconnected surface is the maximum of the entropies of its connected components.

\begin{theorem}[Entropy convergence under high-systole attachments]
\label{thm:entropy-convergence}
Suppose that there is a constant $W>0$ such that
\[
 w(Z_i)\leq W
\]
for every $i$, and that $
 S_i\longrightarrow+\infty.$ Then $
 h_{Y_i}\longrightarrow h_Y.$
\end{theorem}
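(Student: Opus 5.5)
Since $Y\subset Y_i$ is a subsurface (its boundary curves $\C$ become interior simple closed geodesics in $Y_i$, and the rest of $\partial Y$ stays geodesic boundary), every closed geodesic of $Y$ is a closed geodesic of $Y_i$ of the same length. This gives $N_{Y_i}(L)\geq N_Y(L)$ and hence $h_{Y_i}\geq h_Y$ for all $i$. The work is therefore the upper bound $\limsup h_{Y_i}\leq h_Y$. I would prove it by fixing $\delta>0$ and showing $h_{Y_i}\leq h_Y+\delta$ for large $i$, via a coding of closed geodesics on $Y_i$ by their excursions into $Z_i$.

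The first step is a combinatorial decomposition. A closed geodesic $\gamma$ on $Y_i$ that is not contained in $Y$ or in $Z_i$ crosses $\C$ transversally. It therefore splits into alternating arcs: arcs in $Y$ with endpoints on $\C$, and arcs in $Z_i$ with endpoints on $\C$. I would then control the number of homotopy classes (rel $\C$, endpoints sliding) of arcs in $Z_i$ of length at most $t$. Doubling $Z_i$ along $\partial Z_i$, or using the carrier graph of Proposition~\ref{prop:carrier-graph} with the bounded width $W$, and then Lemma~\ref{lem:long-tangle-graphs} with Lemma~\ref{lem:carrier-tangle-systole}, I expect a bound of the form $P_i(t)\,e^{\varepsilon_i t}$. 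Here $P_i$ is at most polynomial, with constants depending only on the fixed topology, $W$ and the lengths of the $\beta_j$, and $\varepsilon_i\to0$ as $S_i\to\infty$. This is exactly what Corollary~\ref{cor:high-systole-width-entropy} gives for closed curves. For arcs, I would argue that tangle length at least $S_i/2$ forces any arc to follow at most one cycle of the carrier graph between long stretches, giving the same exponential rate. The boundary lengths of $Z_i$ are fixed (they are the $\ell(\beta_j)$), so the girth and boundary lower bounds are uniform in $i$.

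Similarly, I would bound the number of arcs in $Y$ between components of $\C$ of length at most $t$ by $C_Y e^{(h_Y+\delta/4)t}$. This is standard, since arc counts on a fixed convex cocompact surface grow at rate $h_Y$. I would also note that each arc has length at least a fixed $c_0>0$: the collars around the $\beta_j$ give this for $Y$-arcs, and $\sys(Z_i)$ together with the collars gives it for $Z_i$-arcs. Consequently the number of pieces is at most $2L/c_0$, and consecutive pieces meet $\C$ orthogonally-ish in the sense of the collar lemma, so concatenations of geodesic arcs are quasi-geodesic with bounded additive loss per junction. Summing over compositions $L=\sum t_k$ with $m\leq 2L/c_0$ pieces and the choice of entry points, I expect
\[
N_{Y_i}(L)\leq \sum_{m\leq 2L/c_0} \binom{L/c+m}{m} K^m e^{(h_Y+\delta/4)L+\varepsilon_i L}.
\]
Here $K$ absorbs the polynomial factors and the bounded junction losses.

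The main obstacle is the factor $K^m$ with $m$ linear in $L$, which a priori adds $\frac{2}{c_0}\log K$ to the exponent and does not vanish. To fix this I would use that $S_i\to\infty$ makes every nontrivial $Z_i$-excursion long: an arc of $Z_i$ not homotopic into $\C$ has length at least roughly $S_i/2-O(1)$ (close it up along $\C$ to get a nonperipheral curve of length at least $S_i$, using the fixed lengths $\ell(\beta_j)$). Excursions that are homotopic into $\C$ instead cancel after tightening. Hence the number of $Z_i$-excursions is at most $2L/S_i+1$, so the junction cost is $e^{O(L\log S_i/S_i)}$, which gives an exponent at most $h_Y+\delta/4+\varepsilon_i+O(\log S_i/S_i)$. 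The $Y$-arcs between consecutive excursions are then merged into single $Y$-paths rather than counted separately, with their multiplicity controlled by the $e^{h_Y t}$ count. Letting $i\to\infty$ gives $\limsup h_{Y_i}\leq h_Y+\delta$, which proves the statement.
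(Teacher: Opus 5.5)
Your proposal is correct and follows essentially the same route as the paper. You cut closed geodesics of $Y_i$ along $\C$, count $Z_i$-excursions through the carrier graph (tangle length at least $S_i/2$ gives subexponential growth), count $Y$-arcs at a rate just above $h_Y$, and amortize the per-block overhead using that every essential excursion has length at least $S_i/2-B_{\max}$. The paper packages this amortization as the generating-function bound $M_{i,m}(L)\leq e^{sL}\rho_i(s)^m$ with $\rho_i(s)\to 0$; you instead bound the number of blocks by $O(L/S_i)$ and count compositions, which amounts to the same thing.

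One small correction: $Z_i$ may have boundary components other than the glued $\beta_j$, and their lengths vary with $i$. So the uniform girth lower bound for the carrier graphs should come from the width hypothesis, namely $\ell\geq 2\arcsinh(1/\sinh(2W))$ as in the paper, not from the fixed lengths $\ell(\beta_j)$.
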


The lower bound is immediate as every closed geodesic contained in $Y$ remains a closed geodesic of the same length in $Y_i$ (and in fact is strict, see Lemma \ref{lem:strict}). The rest of the section is devoted to the reverse inequality.

We first extract from the preceding subsection the quantitative counting statement that will be needed in the proof. Recall that an essential orthogeodesic in $Z_i$ whose endpoints lie on $\C$ is not homotopic, with endpoints allowed to move on $\C$, to an arc of $\C$.

\begin{lemma}\label{lem:quantitative-high-systole-counting}
There are sequences $\varepsilon_i\longrightarrow0$, $A_i\geq1$ such that
\[
 \log A_i=o(S_i)
\]
and, for every integer $n\geq0$, the number of curves on $Z_i$ whose lengths belong to $[n,n+1[$ is at most
\begin{equation}\label{eq:Zi-curve-counting}
 A_i e^{\varepsilon_i n}.
\end{equation}

Moreover, if $\mathcal O_i$ denotes the set of essential orthogeodesics in $Z_i$ with endpoints on $\C$, then every $\eta\in\mathcal O_i$ satisfies
\begin{equation}\label{eq:orthogeodesic-lower-bound}
 \ell_{Z_i}(\eta)\geq R_i:=\frac{S_i}{2}-B,
 \qquad
 B_{\max}:=\max_{1\leq j\leq q}\ell_Y(\beta_j),
\end{equation}
and there are sequences $ \delta_i\longrightarrow0$ and $K_i\geq1$ with $\log K_i=o(R_i)$ such that
\begin{equation}\label{eq:Zi-orthogeodesic-counting}
 \#\bigl\{\eta\in\mathcal O_i:
 n\leq\ell_{Z_i}(\eta)<n+1\bigr\}
 \leq K_i e^{\delta_i n}
\end{equation}
for every integer $n\geq0$.
\end{lemma}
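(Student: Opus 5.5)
Since $Z_i$ has fixed topological type, $\chi(Z_i)$ is constant. Theorem~\ref{thm:high-systole-entropy}, or more precisely its proof, will give the first counting statement, and a doubling argument will give the orthogeodesic statement.

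\emph{Step 1: counting curves.} Apply Corollary~\ref{cor:high-systole-width-entropy} to each component of $Z_i$, using $w(Z_i)\le W$ and $\sys(Z_i)=S_i\to\infty$. This gives $h_{Z_i}\le c_{W,\chi}\log(2+S_i)/S_i\to 0$. Entropy alone does not control constants, so I will rerun the proof at the level of counts.

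By Proposition~\ref{prop:carrier-graph}, each closed geodesic of length $<n+1$ gives a closed path in $G_{Z_i}$ of length $\le \cosh(W)(n+1)$. The sphere estimate in the proof of Lemma~\ref{lem:long-tangle-graphs} gives $\#S_{\widetilde G}(x,t)\le M_i^{\lceil t/r_i\rceil}$. Here $r_i=T_i/(4(N+1))$ with $T_i=S_i/2$, and $M_i=(6N+10)(1+r_i/a_i)$, where the girth bound $a_i=a_W(S_i)$ is bounded below independently of $i$. So $\log M_i=O(\log S_i)$.

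Next, count closed paths of length $\le R$ based at vertices. There are at most $-3\chi$ vertices and at most $R\cdot\max_x\#S_{\widetilde G}(x,R)$ choices. This gives at most $C(1+n)M_i^{1+\cosh(W)(n+1)/r_i}$ classes of length in $[n,n+1[$. I then set $\varepsilon_i=2\cosh(W)\log M_i/r_i=O(\log S_i/S_i)\to0$ and absorb the factor $(1+n)$ into $e^{\varepsilon_i n/2}$ at the cost of a constant $\lesssim 1/\varepsilon_i$. Then $A_i$ is a power of $M_i$ times $\varepsilon_i^{-1}$, so $\log A_i=O(\log S_i)=o(S_i)$.

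\emph{Step 2: the orthogeodesic lower bound.} Let $\eta\in\mathcal O_i$ have endpoints on $\beta_j,\beta_k$. A regular neighbourhood of $\beta_j\cup\eta\cup\beta_k$ (or of $\beta_j\cup\eta$ if $j=k$) is a pair of pants. Since $\eta$ is essential, at least one of its boundary curves is non-peripheral in $Z_i$ and is represented by a concatenation of $\eta$ (twice) with arcs of $\beta_j,\beta_k$. That curve therefore has length $\le 2\ell(\eta)+\ell(\beta_j)+\ell(\beta_k)$. I expect the main obstacle here: one must check, as in Lemma~\ref{lem:carrier-tangle-systole}, that some non-peripheral boundary or figure-eight combination has length bounded this way. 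I will use the figure-eight case when both candidate curves are peripheral. This yields $S_i\le 2\ell(\eta)+2B_{\max}$, which gives \eqref{eq:orthogeodesic-lower-bound} with $B=B_{\max}$.

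\emph{Step 3: counting orthogeodesics.} Map each $\eta$ to a closed curve: the curve $\eta\ast\beta_k^{\pm}\ast\eta^{-1}\ast\beta_j^{\pm}$, or equivalently its geodesic representative. Its length is $\le 2\ell(\eta)+2B_{\max}$. The map $\eta\mapsto$ (this curve, choice of signs) is finite-to-one with uniformly bounded fibres, because the orthogeodesic is recovered as the common perpendicular in the pants. Then Step 1 gives
\[
\#\{\eta:\ n\le \ell(\eta)<n+1\}\le 4\sum_{m\le 2n+2+2B_{\max}}A_ie^{\varepsilon_i m}\le K_ie^{2\varepsilon_i n}.
\]
I set $\delta_i=2\varepsilon_i\to0$ and $\log K_i=O(\log A_i+\varepsilon_iB_{\max}+\log(n\text{-factor}))$. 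Since $R_i\sim S_i/2$, this gives $\log K_i=o(R_i)$.
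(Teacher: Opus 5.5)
Your Steps 1 and 2 follow the paper's argument. Both use the same carrier-graph sphere estimate with $r_i\asymp S_i$ and $\log M_i=O(\log S_i)$, the same polynomial factor from counting orbit points, and the same closing of $\eta$ by boundary arcs with the non-peripheral choice of orientations. One correction in Step 2: a regular neighbourhood of $\beta_j\cup\eta\cup\beta_k$ is a pair of pants only when $\eta$ is simple, and $\mathcal O_i$ necessarily contains non-simple orthogeodesics. The right object is the \emph{immersed} pair of pants $P_\eta$, given by the subgroup generated by the stabilisers of the two boundary lifts joined by a lift of $\eta$. Its figure-eight class is non-peripheral in $Z_i$ because its axis passes through the interior of $\widetilde Z_i$, so your conclusion is unaffected.

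The genuine gap is in Step 3, where you leave the paper's route. You assert that $\eta\mapsto\eta\ast\beta_k^{\pm}\ast\eta^{-1}\ast\beta_j^{\pm}$ has uniformly bounded fibres ``because the orthogeodesic is recovered as the common perpendicular in the pants''. That recovery requires knowing $P_\eta$, i.e.\ which lifts of $\beta_j$ and $\beta_k$ are involved. The image, however, is only a free homotopy class $c$. Since $P_\eta$ is merely immersed, nothing you say rules out the same $c$ arising from many immersed pants with two cuffs on $\beta_j,\beta_k$. You therefore need a multiplicity estimate that is uniform in $i$; a polynomial one suffices.

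One way to get it is to always use the figure-eight choice. An $\eta$ in the fibre over $c$ is then determined by the self-intersection point of $c$ that is the image of the figure-eight's double point in $P_\eta$, together with the order of the two loops at that point. This is because these two loops generate $\pi_1(P_\eta)$, and $\eta$ is the image of the seam of $P_\eta$ between the cuffs over $\beta_j$ and $\beta_k$. Now take two geodesic segments of length less than $\frac12 a_W(S_i)$. They meet at most once, since otherwise they would form an essential loop shorter than every closed geodesic of $Z_i$, boundary curves included. Hence $c$ has $O(\ell(c)^2)$ self-intersections, with a constant independent of $i$. This polynomial factor is absorbed into $K_i$ and a slightly larger $\delta_i$, and your route then works.

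The paper avoids multiplicities altogether. It joins the endpoints of $\eta$ to fixed carrier-graph vertices through boundary arcs and lateral Voronoi sides. Distinct orthogeodesics then give distinct fixed-endpoint path classes, which it counts in $\widetilde G_i$ by applying Proposition~\ref{prop:carrier-graph} relative to endpoints. Your version instead reuses the closed-curve count of Step 1, at the cost of this fibre argument.
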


\begin{proof}
Since the topological type of $Z_i$ is fixed, the number of its connected components is uniformly bounded. Applying the argument below to each component separately and summing the resulting estimates only changes the multiplicative constants. We may therefore argue as if $Z_i$ were connected.

We briefly explain why the estimates proved above give the slightly stronger, uniform form \eqref{eq:Zi-curve-counting}. Since the topology of $Z_i$ is fixed, the corresponding carrier graphs have a uniformly bounded number of edges. The width bound and the collar estimate used in Corollary~\ref{cor:high-systole-width-entropy} give a uniform positive lower bound
\[
b_W=2\arcsinh\left(\frac{1}{\sinh(2W)}\right)
\]
for the lengths of all boundary components of $Z_i$. For all sufficiently
large $i$, Lemmas~\ref{lem:carrier-tangle-systole} and
\ref{lem:carrier-size-girth} therefore give a uniform edge bound, girth at least $b_W$, and tangle length at least $S_i/2$.

In the proof of Lemma~\ref{lem:long-tangle-graphs}, one may consequently take
\[
 r_i\asymp S_i,
 \qquad
 M_i=O(1+S_i),
\]
where $\asymp$ means comparable up to positive multiplicative constants)and obtain, uniformly in the center $x$ of the universal cover of the carrier graph,
\[
 \#S(x,t)\leq M_i^{\lceil t/r_i\rceil}.
\]
The orbit of a vertex is $b_W$-separated, and the carrier graphs have a uniformly bounded number of vertices. It follows that the number of free homotopy classes represented on the carrier graph by closed paths of length at most $t$ is bounded by
\[
 C(1+t)M_i^{1+t/r_i},
\]
where $C$ is independent of $i$. Proposition~\ref{prop:carrier-graph} replaces a curve of length at most $t$ on $Z_i$ by a path of length at most $\cosh(W)t$ on its carrier graph. Since
\[
 \frac{\log M_i}{r_i}
 =O\left(\frac{\log(2+S_i)}{S_i}\right)
 \longrightarrow0,
\]
and since the factor $1+t$ can be absorbed by increasing the exponent by $O(1/r_i)$, we obtain \eqref{eq:Zi-curve-counting} with $\varepsilon_i\to0$ and
\[
 \log A_i=O(\log(2+S_i))=o(S_i).
\]
Changing the finitely many initial constants does not affect these properties.

We now turn to orthogeodesics. Let $\eta\in\mathcal O_i$ have endpoints on $\beta$ and $\beta'$, which are allowed to coincide. By taking two copies of $\eta$ and arcs on $\beta\cup\beta'$ of total length at most $2B$, one obtains a closed curve $\widehat\eta$ of length at most
\begin{equation}\label{eq:closing-orthogeodesic}
 \ell_{Z_i}(\widehat\eta)
 \leq2\,\ell_{Z_i}(\eta)+2B_{\max}.
\end{equation}
There are two possible relative choices of orientation for the boundary arcs. If one of the resulting curves is peripheral, the other is non-peripheral; this is the same elementary pair-of-pants observation used in the proof of Lemma~\ref{lem:carrier-tangle-systole}. We choose the non-peripheral one. Since its geodesic representative has length at least $S_i$, \eqref{eq:closing-orthogeodesic} gives \eqref{eq:orthogeodesic-lower-bound}.

It remains to count the orthogeodesics directly. Let $G_i$ be the carrier graph of $Z_i$. For each distinguished boundary component $\beta_j$, choose a lateral side of the Voronoi decomposition joining a point $x_{i,j}\in\beta_j$ to a point $v_{i,j}\in G_i$. Its length is at most $W$.

Let $\eta$ be an orthogeodesic from $\beta_j$ to $\beta_k$. Join its endpoints to $x_{i,j}$ and $x_{i,k}$ by shortest arcs along the corresponding boundary components, and then join these points to $v_{i,j}$ and $v_{i,k}$ along the chosen lateral sides. Since the distinguished boundary components have lengths at most $B_{\max}$, this gives a path from $v_{i,j}$ to $v_{i,k}$ of length at most
\[
 \ell_{Z_i}(\eta)+B_{\max}+2W.
\]
Distinct relative homotopy classes of orthogeodesics give distinct fixed-endpoint homotopy classes of such paths.

The proof of Proposition~\ref{prop:carrier-graph} applies verbatim to paths whose endpoints lie on the carrier graph, with the endpoints kept fixed. Thus every such class is represented on $G_i$ by a path of length at most
\[
 \cosh(W)\bigl(\ell_{Z_i}(\eta)+B_{\max}+2W\bigr).
\]
Consequently, if
\[
 D:=B_{\max}+2W+1,
 \qquad
 t_n:=\cosh(W)(n+D),
\]
then the number of orthogeodesics with length in $[n,n+1[$ is bounded by the number of fixed-endpoint path classes on $G_i$, between one of the finitely many ordered pairs $(v_{i,j},v_{i,k})$, having length at most $t_n$.

Lift $v_{i,j}$ and $v_{i,k}$ to the universal cover $\widetilde G_i$. Fixed-endpoint path classes from $v_{i,j}$ to $v_{i,k}$ are parametrized by deck translates of the chosen lift of $v_{i,k}$. Hence the same universal-cover estimate used above gives
\[
 \#\{\text{such path classes of length at most }t\}
 \leq C(1+t)M_i^{1+t/r_i},
\]
where $C$ is independent of $i$ and of the ordered pair of endpoints. Therefore
\[
 \#\bigl\{\eta\in\mathcal O_i:
 n\leq\ell_{Z_i}(\eta)<n+1\bigr\}
 \leq
 Cq^2(1+t_n)M_i^{1+t_n/r_i}.
\]

For all sufficiently large $i$,
\[
 1+t_n\leq(1+r_i)e^{t_n/r_i}.
\]
Thus \eqref{eq:Zi-orthogeodesic-counting} holds with
\[
 \delta_i
 =
 \frac{\cosh(W)(1+\log M_i)}{r_i}
\]
and, for instance,
\[
 K_i
 =
 Cq^2(1+r_i)M_i
 \exp\left(
 \frac{\cosh(W)D(1+\log M_i)}{r_i}
 \right).
\]
Since
\[
 r_i\asymp S_i,
 \qquad
 M_i=O(1+S_i),
\]
we have
\[
 \delta_i\longrightarrow0
\]
and
\[
 \log K_i=O(\log(2+S_i))=o(S_i)=o(R_i).
\]
Changing the finitely many initial constants completes the proof.
\end{proof}

We shall also use a fixed counting estimate for arcs in $Y$. Choose once and for all disjoint collars of the curves $\beta_j$ and a marked point on each boundary component of the truncated surface. In each connected component of $Y$, choose a basepoint and paths joining the marked boundary points in that component to its basepoint. After recording separately the displacement and winding inside the collars, each piece of a curve lying in $Y$ determines a relative homotopy class of arc between two marked boundary points. We call these the \emph{normalized arc classes} of $Y$.

\begin{lemma}[Counting arcs in the fixed piece]
\label{lem:fixed-piece-arc-counting}
For every $a>h_Y$, there is a constant $C_Y(a)$ such that, for every integer $n\geq0$, the number of normalized arc classes in $Y$ whose geodesic representatives have length in $[n,n+1[$ is at most
\begin{equation}\label{eq:Y-arc-counting}
 C_Y(a)e^{an}.
\end{equation}
\end{lemma}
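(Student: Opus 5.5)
The plan is to reduce arc counting in $Y$ to orbit counting in the universal cover, and then use that the critical exponent of $\pi_1$ of each component of $Y$ equals its entropy, which is at most $h_Y$.

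Fix a connected component $Y_0$ of $Y$. Let $\Gamma\subset \mathrm{PSL}_2(\R)$ be its Fuchsian group (convex cocompact, since $Y_0$ is compact with geodesic boundary), and let $\widetilde Y_0\subset\Hyp$ be the convex universal cover. Fix lifts $\tilde x$ and $\tilde y$ of two marked boundary points $x,y$ of the truncated surface. A relative homotopy class of arcs from $x$ to $y$, normalized by the chosen paths to the basepoint, corresponds to a unique element $g\in\Gamma$, namely the class of the lift ending at $g\tilde y$. The geodesic representative in the class therefore has length $d(\tilde x, g\tilde y)$, up to an additive error that is bounded in terms of the fixed collars and truncation. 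This error comes from recording the winding and displacement inside the collars separately. The number of normalized arc classes of length in $[n,n+1[$ between $x$ and $y$ is thus at most
\[
\#\{g\in\Gamma : d(\tilde x, g\tilde y)\leq n+1+c\}\leq \#\{g\in\Gamma: d(\tilde x,g\tilde x)\leq n+1+c+d(\tilde x,\tilde y)\},
\]
where $c$ depends only on $Y$.

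Next I bring in the critical exponent. For a convex cocompact group, the orbit growth $\#\{g: d(\tilde x,g\tilde x)\leq R\}$ has exponential rate equal to the critical exponent $\delta_\Gamma$. By Sullivan's theorem, $\delta_\Gamma$ coincides with the topological entropy $h_{Y_0}$, which is at most $h_Y$; these are the same results cited when $h_X$ was introduced. Hence for every $a>h_Y$ the Poincaré series $\sum_{g}e^{-a\,d(\tilde x,g\tilde x)}$ converges, say to $P(a)<\infty$. The simple tail estimate
\[
\#\{g: d\leq R\}\leq e^{aR}P(a)
\]
then gives a bound $P(a)e^{a(c'+1)}e^{an}$. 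If one prefers to avoid the Poincaré series, there is an elementary alternative: count orbit points by disjoint balls of radius the injectivity radius, combined with area growth of the convex hull neighbourhood. That only gives exponent $1$, though, so for exponent $a$ the Poincaré series route is the right one.

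Finally, sum over the finitely many components and ordered pairs of marked points to define $C_Y(a)$. I expect the main obstacle to be making the normalization precise. The difficulty is checking that the normalized class really determines a group element injectively, and that the length of its geodesic representative differs from the orbit distance by a uniformly bounded amount. This holds because the collars are fixed and the marked points lie at bounded distance from the basepoint, but it has to be checked that winding recorded separately in the collars does not produce infinitely many classes with the same group element. It cannot, since distinct classes give distinct elements of $\Gamma$.
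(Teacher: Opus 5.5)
Your proposal is correct and follows essentially the paper's argument. Component by component, you reduce the count to orbit counting for the Fuchsian group via the fixed basepoint and lifts, invoke the equality between entropy and the critical exponent (based orbit growth), and then sum over the finitely many components and pairs of marked points. Your convergent Poincaré series, giving the uniform bound $P(a)e^{a(1+c')}e^{an}$ for every $n$, is simply a more explicit version of the paper's appeal to orbit growth.
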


\begin{proof}
Write
\[
Y=Y^{(1)}\sqcup\cdots\sqcup Y^{(r)}.
\]
For an arc contained in $Y^{(j)}$, closing it to the chosen basepoint of $Y^{(j)}$ changes its length by a uniformly bounded amount and has uniformly bounded multiplicity. Since
\[
h_{Y^{(j)}}\leq h_Y<a,
\]
the standard equality between entropy and based orbit-growth gives the required estimate on each component. Summing over the finitely many components gives \eqref{eq:Y-arc-counting}.
\end{proof}

\begin{proof}[Proof of Theorem~\ref{thm:entropy-convergence}]
Fix
\[
 s>h_Y
\]
and choose $a$ so that
\[
 h_Y<a<s.
\]
We shall show that $h_{Y_i}\leq s$ for all sufficiently large $i$.

Let $\gamma$ be a closed geodesic in $Y_i$ which meets both $Y$ and $Z_i$. Cutting $\gamma$ at its intersections with $\C$, replacing each resulting piece by the shortest representative in its relative homotopy class, and normalizing its endpoints in the fixed collars gives a cyclic code
\[
 a_1c_1^-\eta_1c_1^+\,
 a_2c_2^-\eta_2c_2^+\cdots
 a_mc_m^-\eta_mc_m^+.
\]
Here $a_j$ is a normalized arc in $Y$, $\eta_j\in\mathcal O_i$ is an essential orthogeodesic in $Z_i$, and $c_j^-,c_j^+$ record the boundary displacements and winding in the collars. Excursions homotopic into $\partial Z_i$ are absorbed into the collar terms. The resulting cyclic code determines the free homotopy class of $\gamma$; overcounting such codes will be harmless.

There is a constant $D\geq1$, depending only on the fixed topological data, which bounds the number of choices of boundary labels and orientations per block. There is also a constant $K\geq0$, depending only on the chosen collars, such that
\begin{equation}\label{eq:entropy-convergence-length-bookkeeping}
 \sum_{j=1}^m\ell_Y(a_j)
 +\sum_{j=1}^m\ell_{Z_i}(\eta_j)
 +\sum_{j=1}^m\bigl(\ell(c_j^-)+\ell(c_j^+)\bigr)
 \leq \ell_{Y_i}(\gamma)+Km.
\end{equation}
A collar piece is determined by a boundary label, an orientation, and an integer winding number. Since the curves $\beta_j$ are fixed, there is a constant $C_{\rm cyl}$ such that the number of collar pieces with length in $[n,n+1[$ is at most $C_{\rm cyl}$, independently of $i$ and $n$.

Let $M_{i,m}(L)$ denote the number of free homotopy classes represented by closed geodesics of length at most $L$ which make exactly $m$ essential excursions into $Z_i$. Replacing the length of each piece by the integer indexing the unit interval which contains it, and increasing $K$ ifnecessary, Lemmas~\ref{lem:quantitative-high-systole-counting} and \ref{lem:fixed-piece-arc-counting} give
\begin{align*}
 M_{i,m}(L)
 \leq{}&D^m
 \sum_{\substack{u_j,v_j,w_j^-,w_j^+\geq0\\
 \sum_j(u_j+v_j+w_j^-+w_j^+)\leq L+Km}}
 \prod_{j=1}^m
 \left(
 C_Y(a)e^{au_j}
 K_i e^{\delta_i v_j}
 C_{\rm cyl}^2
 \right),
\end{align*}
where the summand is understood to vanish unless
\[
 v_j\geq\lfloor R_i\rfloor-1.
\]

For nonnegative integers with sum $N$, the elementary inequality
\[
 \boldsymbol{1}_{\{N\leq L+Km\}}
 \leq e^{s(L+Km-N)}
\]
separates the sums. We obtain
\begin{equation}\label{eq:fixed-excursion-count}
 M_{i,m}(L)\leq e^{sL}\rho_i(s)^m,
\end{equation}
where
\begin{align*}
 \rho_i(s)
 ={}&e^{sK}D C_Y(a)C_{\rm cyl}^2K_i
 \left(\sum_{u\geq0}e^{-(s-a)u}\right)
 \left(\sum_{v\geq\lfloor R_i\rfloor-1}
 e^{-(s-\delta_i)v}\right)
 \left(\sum_{w\geq0}e^{-sw}\right)^2.
\end{align*}
All three sums are geometric. Since $\delta_i\to0$, we have
$s-\delta_i\geq s/2$ for all sufficiently large $i$, and hence
\[
 \log\rho_i(s)
 \leq
 \log K_i-(s-\delta_i)(R_i-2)+O_s(1).
\]
By Lemma~\ref{lem:quantitative-high-systole-counting},
$\log K_i=o(R_i)$ and $R_i\to+\infty$. Thus
\[
 \rho_i(s)\longrightarrow0.
\]
In particular, $\rho_i(s)<1/2$ for all sufficiently large $i$. Summing
\eqref{eq:fixed-excursion-count} over $m\geq1$ then gives
\begin{equation}\label{eq:mixed-curve-count}
 \sum_{m\geq1}M_{i,m}(L)
 \leq e^{sL}\sum_{m\geq1}\rho_i(s)^m
 \leq e^{sL}.
\end{equation}
Thus the curves which meet both $Y$ and $Z_i$ have exponential growth rate at
most $s$.

Curves contained entirely in $Y$ have exponential growth rate $h_Y<s$. By \eqref{eq:Zi-curve-counting}, curves contained entirely in $Z_i$ have exponential growth rate at most $\varepsilon_i<s$ for all sufficiently large $i$. Finally, the gluing locus $\C$ contributes only the finitely many curves $\beta_1,\ldots,\beta_q$. Together with \eqref{eq:mixed-curve-count}, this shows that, for all sufficiently large $i$, there is a constant $C_i(s)$ such that
\[
 N_{Y_i}(L)\leq C_i(s)e^{sL}
\]
for every $L\geq0$. Therefore
\[
 h_{Y_i}\leq s
\]
for all sufficiently large $i$. Since $s>h_Y$ was arbitrary,
\[
 \limsup_{i\to\infty}h_{Y_i}\leq h_Y.
\]
Combining this with the obvious lower bound proves the theorem.
\end{proof}

\subsection{Identifying accumulation points and well-orderedness}

\begin{figure}[h]
\centering
\begin{overpic}[width=0.49\linewidth,
%grid,
tics=10]{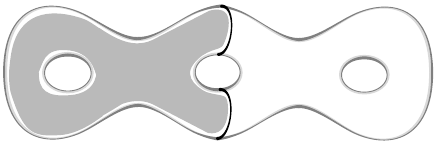}
\end{overpic}
\hfill
\begin{overpic}[width=0.49\linewidth,
%grid,
tics=10]{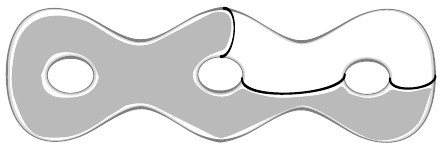}
\end{overpic}
\caption{The entropy of the shaded subsurface on the left is an accumulation point, whereas the entropy of the right is not (unless another smaller subsurface has the same entropy)}
 \label{fig:Entropy}
\end{figure}

In this final section, we prove Theorem \ref{thm:welldone}, which identifies accumulation points in the entropy spectrum and proves well-orderedness. The statements will be proved separately, but the main part is in the first proof.

A consequence of the following theorem is illustrated in Figure \ref{fig:Entropy}.

\begin{theorem}\label{thm:accumulation}
If $h\in ]0,1]$ is an accumulation point of $\E(X)$, then there exists a connected subsurface $Y\subset X$ such that $h_Y=h$ and $X\setminus Y$ has curve complexity at least $1$. Conversely, if $Y\subset X$ is a connected subsurface such that $X\setminus Y$ has curve complexity at least $1$, then $h_Y$ is an accumulation point of $\E(X)$.
\end{theorem}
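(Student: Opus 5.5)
We prove the converse direction first, since it is a direct application of Theorem~\ref{thm:entropy-convergence}. Let $Y\subset X$ be connected with $X\setminus Y$ of curve complexity at least $1$. Then some component $Z$ of $X\setminus Y$ contains a non-peripheral simple closed geodesic $\alpha$. Let $\tau$ be a mapping class supported on $Z$ of infinite order, for instance a Dehn twist about a curve intersecting $\alpha$ essentially, or a pseudo-Anosov on a subsurface of $Z$ containing $\alpha$. Consider the curves $\alpha_i=\tau^i(\alpha)$ and the subsurfaces
\[
Y_i=Y\cup_{\C} Z_i,
\]
where $\C$ is the set of boundary curves of $Y$ adjacent to $Z$, and $Z_i$ is the union of the pieces of $Z\setminus\alpha_i$ adjacent to $Y$. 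After passing to a subsequence, these pieces have a fixed topological type and $Y_i$ is connected. This gives infinitely many distinct connected subsurfaces, each strictly containing $Y$ and properly contained in $X$. Their widths are bounded by $\diam(X)$, since every point of $Z_i$ lies within $\diam(X)$ of $\C$. Each $Z_i$ has boundary curves among $\C$ and $\alpha_i$. The systole of $Z_i$ should tend to infinity, because only finitely many closed geodesics of $X$ have bounded length and the $Z_i$ are pairwise distinct. The difficulty is that a fixed short curve of $Z$ could lie in every $Z_i$. To avoid this, we choose the $\alpha_i$ so that they fill $Z$ in the limit, for instance by taking $\tau$ pseudo-Anosov on $Z$. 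Then every fixed curve of $Z$ eventually intersects $\alpha_i$, so any interior curve of $Z_i$ is eventually long. Theorem~\ref{thm:entropy-convergence} then gives $h_{Y_i}\to h_Y$. By Lemma~\ref{lem:strict} the values satisfy $h_{Y_i}>h_Y$, so $h_Y$ is an accumulation point, approached from above.

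For the direct implication, let $h$ be an accumulation point, realized by infinitely many distinct connected subsurfaces $W_i$ with $h_{W_i}\to h$; by Corollary~\ref{cor:gapexplicit} we may take $h<1$. Only finitely many topological types occur, so we pass to a fixed type. Each $W_i$ has width at most $\diam(X)$. The key step is a compactness argument: decompose each $W_i$ into a \emph{bounded part} $Y$ and a \emph{high-systole part} $Z_i$. We pass to a subsequence in which some finite set of geodesics of $X$ appearing in all $W_i$ stabilizes. Let $Y$ be the essential subsurface filled by closed geodesics of $W_i$ of uniformly bounded length, and let $Z_i$ be the complement $W_i\setminus Y$, whose systole tends to infinity. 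Concretely, we take a diagonal subsequence so that, for each threshold $T$, the set of geodesics of $X$ of length at most $T$ contained in $W_i$ is eventually constant. $Y$ is then the union of the subsurfaces filled by these eventual sets.

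The boundary lengths of $Y$ are controlled, and $Y$ is a fixed subsurface independent of $i$. Since the $W_i$ are distinct, $Z_i$ is nonempty and eventually contains long curves, so $X\setminus Y$ has curve complexity at least $1$. Applying Theorem~\ref{thm:entropy-convergence}, with width bounds and the systole of $Z_i\to\infty$, gives $h_{W_i}\to h_Y$, hence $h=h_Y$. If $Y$ is disconnected, $h_Y$ equals the maximum entropy over its components. We replace $Y$ by a component $Y_0$ achieving this maximum, and $X\setminus Y_0$ still has curve complexity at least $1$. If $Y$ is empty, $h=0$, and this case is excluded by the hypothesis $h>0$.

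The main obstacle is this extraction step. We must make sure that the stabilized bounded part yields genuinely fixed boundary curves $\C$. We must also check that $Z_i$ has fixed topological type with $\sys(Z_i)\to\infty$, including the interior curves of $Z_i$ that are not boundary curves of $W_i$. The plan is to argue by choosing the threshold $T$ increasing slowly along the diagonal subsequence, so that any curve of $Z_i$ shorter than $T$ would have been absorbed into $Y$. The delicate point is that absorbing curves can change the topological type. Finitely many types and a further subsequence resolve this.
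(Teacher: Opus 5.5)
Your direct implication follows the paper's route. You extract a persistent subsurface $Y$ along a subsequence; your diagonal threshold extraction is equivalent to the paper's iterative maximality argument. You get $\sys(Z_i)\to\infty$ from the finiteness of closed geodesics of bounded length, and you conclude with Theorem~\ref{thm:entropy-convergence}.

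There is, however, a case you do not handle. The persistent geodesics may all be simple and pairwise disjoint, so that $Y$ is only a union of annuli around finitely many closed geodesics. Nothing in your extraction rules this out. In that case Theorem~\ref{thm:entropy-convergence} does not apply, since its fixed piece must be a compact hyperbolic surface with geodesic boundary, and ``$h=h_Y$'' has no content.

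The paper treats this as a separate case (Case~II). It reruns the excursion count from the proof of Theorem~\ref{thm:entropy-convergence} with the fixed piece replaced by the persistent curves. Along these curves only displacement and winding occur, and these grow subexponentially. Every essential excursion into $Z_i$ has length at least $R_i\to\infty$ by Lemma~\ref{lem:quantitative-high-systole-counting}. The geometric series then gives $h_{W_i}\to 0$, contradicting $h>0$. The same adaptation is needed whenever $Y$ has annular components alongside non-annular ones.

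Two smaller points:
\begin{itemize}
\item For $Y=\emptyset$, the claim $h=0$ should be justified by Corollary~\ref{cor:high-systole-width-entropy} applied to $W_i=Z_i$.
\item $X\setminus Y$ has curve complexity at least $1$, but not because $Z_i$ contains long curves; a pair of pants contains arbitrarily long non-simple geodesics. The correct reason is that infinitely many distinct $Z_i$ fit inside $X\setminus Y$, which is impossible if every component of $X\setminus Y$ is a pair of pants or an annulus.
\end{itemize}

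For the converse you take a genuinely different route. The paper glues a single pair of pants $Z_0\subset X\setminus Y$ to $Y$ and applies powers of a Dehn twist about a curve crossing $Z_0$. Systole divergence is then immediate: every interior geodesic of a pair of pants fills it, so it lies in at most one of the distinct pants $Z_i$.

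You instead attach the pieces of $Z\setminus\tau^i(\alpha)$ adjacent to $Y$. As you correctly observe, this forces $\tau$ to be pseudo-Anosov on $Z$, since a Dehn twist would leave curves disjoint from both $\alpha$ and the twisting curve inside every $Z_i$. Your systole divergence then relies on every fixed non-peripheral closed geodesic of $Z$, simple or not, eventually meeting $\tau^i(\alpha)$, which holds because the unstable lamination fills $Z$. This is correct, and the remaining hypotheses of Theorem~\ref{thm:entropy-convergence} hold (fixed type, fixed gluing locus, width at most $\diam(X)$). Lemma~\ref{lem:strict} then gives approach from above. Your construction simply uses more machinery than the paper's elementary pants-and-twist one.
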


\begin{proof}

Let $h_{Y_i}$ be a sequence of entropies that converge to a value $h \in [0,1]$ with distinct $Y_i$. Among the $Y_i$, we restrict to a subsequence of homeomorphic $Y_i$. Among this sequence we consider a maximal subsurface $Y$ (not necessarily connected) such that there exists an infinite subsequence with $Y_i=Y\cup Z_i$. Maximality should be thought of in the following sense: if it exists, find a curve which lies in infinitely many $Y_i$s (and consider all surfaces $Y_i$ containing it). Now if there is an infinite subsequence which contains another curve, then restrict to the subsequence of surfaces which contain this new curve. Note that this new subsequence contains the subsurface $Y$ spanned by the two curves. If there is a curve $\alpha$ which does not lie entirely in $Y$ and which belongs to an infinite subsequence, restrict to this subsequence and add $\alpha$ to $Y$ and so on. This process ends in a finite number of steps as the number of curves you can add is bounded by the topology. 

One can think of the resulting sequence as equipped with a sequence of homeomorphisms to a base surface $Y_0=Y \cup Z_0$. Again, among these, there must be an infinite sequence such that the implicit homeomorphism $\varphi_i$ sends $Y_i$ to $Y_0$, $Y$ to $Y$ (fixing each boundary component) and $Z_i$ to $Z_0$. Now there are essentially three cases, but the first two work the same way:
\begin{enumerate}[I.]
\item\label{caseI} $Y$ is empty.
\item\label{caseII} $Y$ is a collection of disjoint simple curves.
\item\label{caseIII} $Y$ contains at least one pair of pants.
\end{enumerate}
The essential observation is that in all cases,
\[
\sys(Z_i)\longrightarrow\infty.
\]
Indeed, otherwise there would exist $L>0$ and an infinite subsequence for which each $Z_i$ contains a non-peripheral closed geodesic of length at most $L$. Since $X$ has only finitely many closed geodesics of length at most $L$, after passing to a further subsequence one of these curves would belong to every $Z_i$. It could then be added to $Y$, contradicting the maximality of $Y$.

In particular, this means that $X\setminus Y$ is of curve complexity at least $1$. 

In Case \ref{caseI}, the high-systole entropy estimate gives
\[
h_{Y_i}\longrightarrow 0,
\]
which is impossible since $h>0$.

In Case \ref{caseII}, the same counting argument as in the proof of Theorem~\ref{thm:entropy-convergence} applies, with the fixed piece replaced by the finite union of persistent closed geodesics. The only contributions along these curves are boundary displacement and winding, and hence have zero exponential growth. Since every essential excursion into $Z_i$ has length tending to infinity, the corresponding geometric-series estimate gives
\[
h_{Y_i}\longrightarrow0.
\]
Thus this case is also impossible.

Finally, in Case \ref{caseIII}, Theorem~\ref{thm:entropy-convergence} applies
and gives
\[
h_{Y_i}\longrightarrow h_Y.
\]
If $Y$ is disconnected, choose one of its maximal entropy connected components. 

Now observe that because there is an infinite sequence of $Z_i$, this means that $X\setminus Y$ is of curve complexity at least $1$. Finally, taking any $Y$ with $X\setminus Y$ of curve complexity at least $1$, consider any $Y_0 = Y \cup Z_0$ obtained by pasting a pair of pants to $Y$ which lies in a connected component of $X\setminus Y$ of curve complexity at least $1$, say $\tilde{Y}$. Now consider a non-peripheral essential simple curve $\alpha$ in $\tilde{Y}$ which essentially intersects $Z_0$. Now consider $Z_i$ the result of $i$ (positive) Dehn twists of $Z_0$ along $\alpha$ and $Y_i$ obtained by pasting $Y$ to $Z_i$. The hypotheses of Theorem~\ref{thm:entropy-convergence} are satisfied. Indeed, the widths of the complementary pieces are uniformly bounded by $\diam(X)$, while their systoles tend to infinity by the same finiteness argument used above: otherwise a bounded-length curve would occur in infinitely many of them, contradicting the maximality of the persistent subsurface. We can conclude that $h_Y$ is an accumulation point.
\end{proof}

By analyzing the proof, we obtain the rest of the statement of Theorem \ref{thm:welldone}.

\begin{theorem}\label{thm:order}
The entropy spectrum does not contain a nonincreasing sequence corresponding to distinct subsurfaces. In particular, it is reverse well-ordered, and multiplicities are bounded. 
\end{theorem}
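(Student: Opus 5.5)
The approach is to reuse the subsequence extraction from the proof of Theorem~\ref{thm:accumulation} and to add Lemma~\ref{lem:strict}. The target is the following claim.

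\textbf{Claim.} For any infinite sequence of pairwise distinct connected subsurfaces $Y_i\subset X$, some subsequence satisfies $h_{Y_i}>h_*$ for every $i$, where $h_*$ is the limit of $h_{Y_i}$ along that subsequence.

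This rules out every sequence of entropies of distinct subsurfaces that stays at or below its limit, in particular every nondecreasing one, including constant ones. Reverse well-ordering and finiteness of multiplicities then both follow.

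First I extract a good subsequence.
\begin{itemize}
\item Pass to a subsequence of homeomorphic $Y_i$ whose entropies converge to some $h_*\in[0,1]$. There are finitely many topological types of subsurfaces, and entropies lie in $[0,1]$.
\item Run the maximality procedure from the proof of Theorem~\ref{thm:accumulation}. This produces a fixed (possibly disconnected) subsurface $Y$, a further subsequence with $Y_i=Y\cup Z_i$, and the conclusion $\sys(Z_i)\to\infty$.
\item Since the $Y_i$ are distinct, each $Z_i$ is nonempty. Hence $Y$ is a proper subsurface of every connected $Y_i$.
\end{itemize}

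Next I identify the limit. The case analysis of Theorem~\ref{thm:accumulation} applies here. In Cases~\ref{caseI} and~\ref{caseII} we get $h_{Y_i}\to 0$. In Case~\ref{caseIII}, Theorem~\ref{thm:entropy-convergence} applies, because the widths of the $Z_i$ are bounded by $\diam(X)$. It gives $h_{Y_i}\to h_Y$, where $h_Y$ is the maximum of the entropies of the components of $Y$.

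Then comes the strictness step. Each component $Y^{(j)}$ of $Y$ is a proper subsurface of the connected surface $Y_i$. Lemma~\ref{lem:strict} therefore gives $h_{Y^{(j)}}<h_{Y_i}$, hence $h_*=h_Y<h_{Y_i}$ for every $i$. In Cases~\ref{caseI} and~\ref{caseII} the inequality $h_{Y_i}>0=h_*$ is immediate. So along the subsequence every term lies strictly above the limit, which proves the claim.

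The consequences follow from the claim.
\begin{itemize}
\item \emph{Multiplicities.} Suppose infinitely many distinct subsurfaces shared one entropy value $h$. The constant sequence would have a subsequence with $h>h$, which is absurd. So all multiplicities are finite.
\item \emph{Reverse well-ordering.} Suppose some nonempty subset of $\E(X)$ had no maximum. We could then choose a strictly increasing sequence of its values, realized by distinct subsurfaces. Any subsequence of it lies strictly below its limit, contradicting the claim.
\end{itemize}
Together these are the content of the statement: entropies of distinct subsurfaces can only accumulate from above.

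The main obstacle is the strictness step. One must check that the limit produced by the extraction is exactly $h_Y$ for a subsurface $Y$ properly contained in each $Y_i$. This is what makes Lemma~\ref{lem:strict} applicable uniformly in $i$, and it is the step I would verify most carefully.

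Two points need attention there. The gluing $Y\cup_{\C}Z_i$ must be realized inside $X$, so that $Y\subsetneq Y_i$ genuinely holds. The disconnected case must also be handled by comparing the maximal-entropy component of $Y$ with the connected surface $Y_i$. Everything else is bookkeeping already present in the proof of Theorem~\ref{thm:accumulation}.
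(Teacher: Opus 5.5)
Your proposal is correct and follows essentially the paper's own proof: extract the maximal persistent subsurface $Y$ with $Y_i=Y\cup Z_i$ as in the proof of Theorem~\ref{thm:accumulation}, identify the limit as $h_Y$, and use Lemma~\ref{lem:strict} to get $h_{Y_i}>h_Y$, so the limit cannot be the supremum of a nondecreasing sequence. You also correctly read the statement's ``nonincreasing'' as ``nondecreasing'', which is what the paper's proof treats via $h_{Y_{i+1}}\geq h_{Y_i}$, and your explicit handling of constant sequences to get finite multiplicities matches the paper's use of weak inequalities.
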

\begin{proof}
This follows from the proof of the previous result. By contradiction, let $\{Y_i\}$ be a sequence of distinct subsurfaces such that 
\[
h_{Y_{i+1}}\geq h_{Y_i}
\]
for all $i$. As it is monotone increasing, it converges to its supremum $h \in ]0,1]$. (Note that we could exclude $1$ by the gap result, but this plays no role here.) By the previous proof, it contains a subsequence with a maximal $Y$ subsurface in each $Y_i$ with $h_{Y_i}$ which converges to $h_Y=h$. But, since $h_{Y_i} > h_Y=h$ (see Lemma \ref{lem:strict} from the preliminaries), for each $i$, $h$ is not a supremum.
\end{proof}

In particular, the {\it proper} entropy spectrum, that is the set of entropies of proper subsurfaces, has a largest element. As such, for a closed surface $X$, we set 
\[
h_*(X):=\max\bigl(\mathcal E(X)\setminus\{1\}\bigr)
\]
and call it the largest proper entropy of $X$. It can be interpreted geometrically. 

\begin{corollary}\label{cor:proper}
The largest proper entropy of $X$ satisfies
\[
h_*(X)=
\max_{\substack{\alpha\in\mathcal G^0(X)\\
\alpha\ \mathrm{non\mbox{-}separating}}}
h_{X\setminus\alpha}.
\]
Conversely, if $Y$ is a subsurface of $X$ satisfying $h_Y = h_*(X)$, then $Y = X\setminus \alpha$ for some non-separating $\alpha$. 
\end{corollary}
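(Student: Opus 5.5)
The plan is to combine Theorem \ref{thm:order}, which says the maximum $h_*(X)$ exists, with the strict monotonicity of Lemma \ref{lem:strict}. The argument reduces to a purely topological statement: every connected proper subsurface $Y\subsetneq X$ is contained in $X\setminus\alpha$ for some non-separating simple closed geodesic $\alpha$. Moreover, if $Y$ is not itself of this form, the containment can be taken to be proper.

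First, take $Y$ with $h_Y=h_*(X)$; it exists by Theorem \ref{thm:order}. Since $Y\neq X$ is an essential geodesic subsurface, it has at least one boundary geodesic $\beta$ lying in the interior of $X$. I split into two cases.

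\emph{Case 1: some boundary component $\beta$ of $Y$ is non-separating in $X$.} Then $Y\subset X\setminus\beta$, since $Y$ is connected, disjoint from the interior of the annulus around $\beta$, and lies on the (single) complementary piece. So $X\setminus \beta$ is a connected proper subsurface containing $Y$.

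\emph{Case 2: every boundary component of $Y$ separates $X$.} Pick a component $Z$ of $X\setminus Y$. Because $\partial Y$ consists of separating curves and $X$ is closed, $Z$ has exactly one boundary curve, and $Z$ has positive genus (a one-holed sphere is a disc, which is not allowed). Hence $Z$ contains a non-separating simple closed geodesic $\alpha$ of $X$, disjoint from $Y$, and therefore $Y\subset X\setminus\alpha$.

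In either case $Y\subseteq X\setminus\alpha$ for some non-separating $\alpha$. If the inclusion is proper, Lemma \ref{lem:strict} (applied to the connected surface $X\setminus\alpha$) gives
\[
h_Y<h_{X\setminus\alpha}\leq h_*(X),
\]
which contradicts maximality. So $Y=X\setminus\alpha$, which is exactly the converse statement. It also gives the displayed equality: on one hand $h_*(X)=h_Y$ with $Y$ of this form, and on the other $h_{X\setminus\alpha}\le h_*(X)$ for every such $\alpha$, because $X\setminus\alpha$ is a proper subsurface. The maximum over $\alpha$ is attained because of the first inequality.

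The step I expect to require care is applying Lemma \ref{lem:strict} properly. That lemma concerns a proper subsurface of a connected surface, so we need $X\setminus\alpha$ connected (true since $\alpha$ is non-separating) and $Y$ a \emph{proper} essential subsurface of it. In Case 1, $Y\neq X\setminus\beta$ could fail only when $Y=X\setminus\beta$ itself, which is the desired conclusion. In Case 2 the containment is automatically proper, because $Z$ contributes a component with a handle that is not in $Y$. A minor point is that $Y$ might be an annulus or a degenerate subsurface; these have entropy $0$ and cannot be maximal, since $h_{X\setminus\alpha}>0$.
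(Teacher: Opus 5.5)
Your proposal is correct and follows essentially the same route as the paper. Both arguments get a maximizer $Y$ from Theorem \ref{thm:order}, use Lemma \ref{lem:strict} to show $Y$ is maximal among proper connected subsurfaces, and split into cases according to whether a boundary curve of $Y$ is non-separating. The only difference is in the separating case: the paper takes a single separating boundary curve $\alpha$ and uses the component of $X\setminus\alpha$ not containing $Y$, which automatically has positive genus. That avoids your extra claim, true but only sketched, that every component of $X\setminus Y$ has exactly one boundary curve.
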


\begin{proof}
By the previous theorems, $h_*(X)$ is well-defined and realized as the entropy of a connected subsurface $Y\subsetneq X$, that is $h_Y=h_*(X)$. By Lemma~\ref{lem:strict}, $Y$ cannot be properly contained in another proper connected subsurface. We now show that 
\[
Y=X\setminus\alpha
\]
for some non-separating simple closed geodesic $\alpha$. To see this, consider a curve $\alpha$ which is component of $\partial Y$. If $\alpha$ is non-separating, the connected surface $X\setminus\alpha$ contains $Y$, and maximality gives
$Y=X\setminus\alpha$. If $\alpha$ is separating, then $Y$ is one of the two components of $X\setminus \alpha$. Both components have positive genus, and hence there exists a non-separating curve $\beta$ on $X\setminus Y$. But now $X\setminus \beta$ strictly contains $Y$, and by maximality we have reached a contraction. 
\end{proof}

The equivalent of the largest proper entropy for the length spectrum would be the systole. Note that, unlike for the systole, there is only one topological type of a subsurface realizing the largest proper entropy. 

We end the paper with a few remarks.

\begin{remark}[The pants spectrum]
Our results identify, exactly, accumulation points in $\E(X)$, showing namely in which way it fails to be a discrete set. This means that certain natural subsets of $\E(X)$ are discrete subsets. 

For instance, consider the set $\E^P(X)$ of entropies of pairs of pants which lie in $X$. As there are only finitely many up to given boundary length, any infinite sequence of distinct pants must have systole going to infinity, and so the set of entropies must converge to $0$. Hence, as multisubset of $]0,1]$, $\E^P(X)$ is discrete. 

\end{remark}

\begin{remark}[Genus $2$]
If $X$ is of genus $2$, then the limited number of topological types of subsurfaces make the analysis even easier. All proper connected surfaces are either pairs of pants, one-holed tori, or the complement of a single non-separating curve. Thus, by Theorem \ref{thm:accumulation}, accumulation points correspond exactly to entropies of its one-holed tori subsurfaces. And any value which is not an accumulation point corresponds either to the entropy of a pair of pants or the complement of a single non-separating curve. As the former form a discrete subset, the non-discreteness only comes from the latter.\end{remark}

\begin{remark}[Non-filling geodesics and the entropy gap]\label{rem:LS}
There is another interpretation of $h_*(X)$, which follows from work of Lenzhen and Souto \cite{LenzhenSouto}. 

Let $T^1X$ denote the unit tangent bundle of $X$, and let $\NN(X)\subset T^1X$ be the set of unit tangent vectors whose corresponding complete geodesics are non-filling. Thus a vector belongs to $\NN(X)$ if the complete geodesic it determines does not intersect some essential simple closed geodesic transversely.

For a simple closed geodesic $\alpha$, let $\widehat\alpha\subset T^1X$ denote the set of vectors whose corresponding complete geodesics do not intersect $\alpha$ transversely. Lenzhen--Souto
\cite[Lemma~3.3]{LenzhenSouto} show that
\[
\dim_H(\widehat\alpha)=1+2h_{X\setminus\alpha}.
\]
Their statement is given in the projectivized tangent bundle but passing to the unit tangent bundle does not change Hausdorff dimension.

Now since
\[
\NN(X)=\bigcup_{\alpha\in\mathcal G^0(X)}\widehat\alpha
\]
is a countable union, it follows that
\[
\dim_H\NN(X)
=
1+2\sup_{\alpha\in\mathcal G^0(X)}h_{X\setminus\alpha}.
\]
By our description of the entropy spectrum, this supremum is its largest proper value. Using the notation $h_*(X)$ this becomes
\[
\dim_H\NN(X)=1+2h_*(X),
\]
or equivalently,
\[
3-\dim_H\NN(X)=2(1-h_*(X)).
\]
Thus the first gap in the entropy spectrum is exactly half the Hausdorff codimension of the set of complete non-filling geodesics.
\end{remark}

{\it Addresses:}\\
The Graduate Center and Hunter College, CUNY, NY, New York, USA.\\
Department of Mathematics, University of Fribourg, Switzerland\\

{\it Emails:}\\
abasmajian@gc.cuny.edu\\
hugo.parlier@unifr.ch\\

\end{document}